\numberwithin{equation}{section}
\newtheorem{theorem}{Theorem}[section]
\newtheorem{lemma}[theorem]{Lemma}
\newtheorem{proposition}[theorem]{Proposition}
\newtheorem{corollary}[theorem]{Corollary}
\theoremstyle{definition}
\newtheorem{definition}[theorem]{Definition}
\newtheorem{remark}[theorem]{Remark}
\newenvironment{assumption}[1]
  {\innercustomthm}
  {\endinnercustomthm}
\def\E{{\mathbb E}}
\def\R{{\mathbb R}}
\def\P{{\mathcal P}}
\def\RC{{\mathcal R}}
\def\B{{\mathcal B}}
\def\V{{\mathcal V}}
\def\M{{\mathcal M}}
\def\Q{{\mathcal Q}}
\def\F{{\mathcal F}}
\def\C{{\mathcal C}}
\title[Mean field games via controlled martingale problems]{Mean field games via controlled martingale problems: Existence of Markovian equilibria}
\author{Daniel Lacker}
\address{ORFE,  Princeton University,
Princeton, NJ  08544, USA.}
\email{dlacker@princeton.edu}
\thanks{Partially supported by NSF: DMS-0806591}
\keywords{Mean field games, controlled martingale problem, relaxed control}
\begin{document}

\begin{abstract}
Mean field games are studied in the framework of controlled martingale problems, and general existence theorems are proven in which the equilibrium control is Markovian. The framework is flexible enough to include degenerate volatility, which may depend on both the control and the mean field. The objectives need not be strictly convex, and the mean field interactions considered are nonlocal and Wasserstein-continuous. When the volatility is nondegenerate, continuity assumptions may be weakened considerably. The proofs first use relaxed controls to establish existence. Then, using a convexity assumption and measurable selection arguments, strict (non-relaxed) Markovian equilibria are constructed from relaxed equilibria.
\end{abstract}

\maketitle

\section{Introduction}
The purpose of this paper is to develop a new framework for the analysis of (continuous-time) mean field games and to use it to prove several very general existence results. The story of mean field games, introduced independently in the pioneering work of Huang, Malham\'e, and Caines \cite{huangmfg1} and Lasry and Lions \cite{lasrylionsmfg}, begins with a certain class of large-population stochastic differential games. Agents $i=1,\ldots,n$ have private state processes $X^i$, the dynamics of which are given by the stochastic differential equation (SDE)
\begin{align*}
dX^i_t &= b(t,X^i_t,\bar{\mu}^n_t,\alpha^i_t)dt + \sigma(t,X^i_t,\bar{\mu}^n_t,\alpha^i_t)dW^i_t, \\
\bar{\mu}^n_t &= \frac{1}{n}\sum_{j=1}^n\delta_{X^j_t}.
\end{align*}
Here $W^1,\ldots,W^n$ are independent Wiener processes, $\bar{\mu}^n_t$ is the empirical measure of the state processes at time $t$, and $\alpha^i_t$ is the control process chosen by agent $i$. Agent $i$ seeks to choose a control $\alpha^i$ to maximize
\[
\E\left[\int_0^Tf(t,X^i_t,\bar{\mu}^n_t,\alpha^i_t)dt + g(X^i_T,\bar{\mu}^n_T)\right].
\]
The agents have the same state process coefficients and objective functions, and their optimization problems are coupled only through the empirical distribution $\bar{\mu}^n_t$ of the state processes. Since the objectives are coupled, we naturally look for Nash equilibria. If the initial conditions $X^1_0,\ldots,X^n_0$ are symmetric, then in a sense so is the entire game. If the number of agents $n$ is large, we hope to learn something about the Nash equilibria of this game from the corresponding \emph{mean field game}, which intuitively captures the idea of an infinite-agent version of the game. The structure of the mean field game is as follows: Fix a function $t \mapsto \mu_t$ with values in the space $\P(\R^d)$ of probability measures on $\R^d$, and solve (if possible) the optimal control problem given by
\begin{align}
\sup_\alpha \ &\E\left[\int_0^Tf(t,X^\alpha_t,\mu_t,\alpha_t)dt + g(X^\alpha_T,\mu_T)\right], \nonumber \\
\text{ s.t. } dX^\alpha_t &= b(t,X^\alpha_t,\mu_t,\alpha_t)dt + \sigma(t,X^\alpha_t,\mu_t,\alpha_t)dW_t. \label{introSDE}
\end{align}
Then, let $\Phi(\mu)$ denote the set of time-marginal laws of the optimally controlled state processes:
\[
\Phi(\mu) = \left\{(\text{Law}(X^{\alpha^*}_t))_{t \in [0,T]} : \alpha^* \text{ is optimal}\right\}.
\]
A mean field game (MFG) solution is a fixed point of this map $\Phi$, or $\mu \in \Phi(\mu)$. In short, the MFG problem consists of solving (a family of) control problems and finding a fixed point.

Intuitively, the SDE \eqref{introSDE} describes the state process dynamics of a single representative agent, and $\mu_t$ represents the distribution of an infinity of agents' state processes. The representative agent cannot influence $\mu_t$ and thus considers it as fixed when solving the optimization problem. If each agent among the infinity is identical and acts in the same way, then consistency demands that the time-marginal laws of the representative's optimally controlled state process must agree with $\mu_t$.

Three of the main questions in the theory of mean field games pertain to existence of solutions, uniqueness of solutions, and convergence of finite-player equilibria. The question of convergence is two-sided: one may try to show that the Nash equilibria of the finite-player games converge in some sense to a mean field limit, or one may try to use a MFG solution to construct \emph{approximate equilibria} for the finite-player games. The latter approach is far more common in the literature since \cite{huangmfg1}, whereas few results exist so far for the former (e.g. \cite[Theorem 2.3]{lasrylionsmfg}). Uniqueness is generally harder to come by but is known to hold under the monotonicity condition of Lasry and Lions \cite{lasrylionsmfg} or when the time horizon is small \cite{huangmfg1}.

The present paper studies solely the problem of existence, and the first main result, Theorem \ref{th:mainexistence-intro}, is stated precisely in Section \ref{se:statement}, immediately following the introduction. Theorem \ref{th:mainexistence-intro} is quite general in scope: there exists a MFG solution for which the corresponding optimal control happens to be \emph{Markovian} (though still optimal among non-Markovian controls). To comment on the assumptions: Both the control and mean field may influence both the drift and volatility coefficients. The volatility may degenerate, and so our results include first-order mean field games. The mean field dependence is nonlocal and continuous with respect to a $p$-Wasserstein distance, and $p$ also determines the growth rates of the data. In Section \ref{se:elliptic}, under the additional assumption that the volatility is uncontrolled and uniformly nondegenerate, an alternative existence result (Theorem \ref{th:ellipticexistence}) is proven under weaker continuity assumptions.

This paper unifies and generalizes several known existence results, but at least as important is the novel framework for the analysis. A fundamental difficulty in the analysis of mean field games is often deriving nice properties for the optimal feedback control $\alpha_t = \alpha(t,X_t,\mu_t)$, to prove that the fixed point map $\Phi$ described above is continuous (and single-valued). We avoid this issue entirely by working with \emph{relaxed controls}, which essentially compactifies the class of admissible controls and converts a stochastic optimal control problem into a linear program (albeit in infinite dimension). The choice variable is no longer a \emph{control processes} but rather a \emph{joint law of the control-state pair}. Existence is proven via Kakutani's fixed point theorem, by showing that the set of optimally controlled state process laws is suitably continuous, as a function of the input measure flow $\mu$. Our use of relaxed controls may be seen as a form of \emph{mixed strategies} in the stochastic differential setting, and our formulation allows us to bring classical game-theoretic arguments to bear on the mean field game problem. Indeed, since Nash \cite{nash-1950}, game theorists have exploited set-valued fixed point theory to prove the existence of equilibria with no need for \emph{unique} best responses. 

Introducing relaxed controls facilitates existence proofs, but of course we are more interested in MFG solutions involving \emph{strict} (non-relaxed) controls. The punchline is that (under an additional convexity assumption) from any relaxed control one can construct a superior strict Markovian control, and crucially this can be done \emph{without changing the time-marginal laws} of the state process. In particular, this construction does not disturb the fixed point property of MFG solutions, and we can construct a MFG solution with strict Markovian control from a MFG solution with relaxed control. This allows us to state our main existence result in Section \ref{se:statement} without any reference to relaxed controls.

The mean field games literature is dominated by two lines of research: one is based on partial differential equations (PDEs), and the other is based on the stochastic maximum principle. Following Lasry and Lions \cite{lasrylionsmfg,lasrylions-jeux1,lasrylions-jeux2,gueantlasrylionsmfg}, the PDE approach studies the control problems via the (backward) Hamilton-Jacobi-Bellman equation and resolves the fixed point by coupling the HJB with the (forward) Kolmogorov equation of the state process. The successes of PDE methods include its amenability to numerics \cite{achdou-mfgnumerics} and its ability to handle \emph{local} mean field interactions \cite{lasrylionsmfg,gomes-timedependent-subquadratic,gomes-timedependent-superquadratic,cardaliaguet-mfgweaksolutionsfirstorder}, in which the functions of $(X_t,\mu_t)$ involves the density $d\mu_t/dx(X_t)$, but these matters are not addressed in this paper. For good surveys, see the notes of Cardaliaguet \cite{cardaliaguet-mfgnotes} and Gomes and Sa\'ude \cite{gomessaude-mfgsurvey}.

On the other hand, the stochastic maximum principle reduces the control problems to coupled forward-backward stochastic differential equations (FBSDEs), and the fixed point condition injects an additional feedback into the system in the sense that the coefficients of the FBSDE now depend on the law of the solution. The mean field FBSDE (as it was named in \cite{carmonadelarue-mffbsde}) underlies the mean field game analysis of Carmona and Delarue \cite{carmonadelarue-mfg}, Bensoussan et al. \cite{bensoussan-lqmfg}, and the recent book \cite{bensoussan-mfgbook}. This stochastic approach leads to a notably clean solution of linear-quadratic mean field games, as in \cite{bensoussan-lqmfg,carmonadelaruelachapelle-mkvvsmfg}, as well as an interesting generalization of mean field games involving one dominant agent competing with a mean field of minor agents \cite{huang-majorminor,buckdahn-majorminormfg}.

A bit of an outlier is the probabilistic weak formulation of \cite{carmonalacker-probabilisticweakformulation}, which employs a well-known Girsanov transformation of the control problems under the crucial assumption that the volaility is nondegenerate and uninfluenced by the mean field or the control. Of the papers mentioned so far, \cite{carmonalacker-probabilisticweakformulation} is perhaps philosophically closest to the approach of the present paper in two ways: It works directly with the \emph{law} of the controlled state process, rather than with the process itself, and it appears to be the only work on mean field games thusfar which allows for non-unique optimal controls (by appealing to a set-valued fixed point theorem). A superficial analogy could be made between the present work and that of Kolokoltsov et al. \cite{kolokoltsov-mfg}, which similarly works first and foremost with the infinitesimal generator of the state process, but the similarity ends here: \cite{kolokoltsov-mfg} is really a generalization of the PDE approach.

Our use of relaxed controls for stochastic optimal control problems borrows heavily from the fundamental work of El Karoui et al. \cite{elkaroui-compactification} and the more general arguments of Haussmann and Lepeltier \cite{haussmannlepeltier-existence}. The argument in \cite{elkaroui-compactification,haussmannlepeltier-existence} for producing a Markovian control from a relaxed control follows Krylov's ideas (explained in \cite[Chapter 12]{stroockvaradhanbook}), but we prefer to exploit the recent ``mimicking theorem'' of Brunick and Shreve \cite{brunickshreve-mimicking}, which generalizes a well-known result of Gy\"ongy \cite{gyongy-mimicking}. Borkar and Ghosh \cite{borkarghosh-games} provide what appears to be the only study of $n$-player stochastic differential games using relaxed controls, and our use of Markovian relaxed controls in Section \ref{se:elliptic} resembles theirs.

Given the success of relaxed control theory in quite general stochastic optimal control problems, several extensions of our framework to various mean field game problems are feasible. An extension of the framework will appear in follow-up work addressing the convergence of finite-player equilibria as well as mean field games with \emph{common noise} (which have only appeared so far in some particular models \cite{gueantlasrylionsmfg,carmonafouque-systemicrisk}). Presumably more straightforward would be an adaptation of our framework to mean field games involving more general time horizons (e.g. infinite or up to a stopping time, as in \cite{haussmannlepeltier-existence}) or state processes given by jump-diffusions (as in \cite[Section 8]{elkaroui-compactification}). Relaxed control theory is pushed much further by Kurtz and Stockbridge in \cite{kurtzstockbridge-1998}, but their level of abstraction seems a bit out of reach at the moment; see Section \ref{se:extensions} for further comments.

It is worth mentioning that our existence theorems, as with most of those obtained via relaxed control theory, are rather abstract in nature and provide little insight into how to compute MFG solutions. But this nonconcreteness is quite prevalent in MFG existence theory: while the PDE and stochastic methods described above are more tangible in their handling of the control problems, the construction of MFG solutions is still through abstract and rather intractable Schauder-type fixed point theorems (except when the time horizon is small and contraction arguments are available).

The paper is structured as follows. The main assumptions and existence theorem are stated precisely in Section \ref{se:statement}, without any mention of relaxed controls. Section \ref{se:relaxed} introduces relaxed controls and the language of controlled martingale problems, in terms of which the mean field game is then described. Theorem \ref{th:markovselection} gives the procedure for constructing non-relaxed and Markovian solutions from relaxed solutions, and this is proven immediately in Section \ref{se:relaxed}. The proof of existence of relaxed MFG solutions is split between Section \ref{se:boundedcoefficients} and \ref{se:unboundedcoefficients}. Section \ref{se:boundedcoefficients} treats the simpler case of bounded state coefficients and control space, while Section \ref{se:unboundedcoefficients} extends this to the unbounded case by an approximation procedure. Some refinements are discussed in Section \ref{se:elliptic}, when the volatility is uncontrolled and nondegenerate. A simple but important counterexample is discussed in Section \ref{se:counterexample}, and finally Section \ref{se:extensions} points to potential future work. Some technical results and background on Wasserstein spaces are gathered in the appendix.

\section{Statement of main results} \label{se:statement}
For a measurable space $(\Omega,\F)$, let $\P(\Omega)$ denote the set of probability measures on $(\Omega,\F)$. When $\Omega$ is a topological space, let $\B(\Omega)$ denote its Borel $\sigma$-field, and endow $\P(\Omega)$ with the topology of weak convergence. Fix a finite time horizon $T > 0$. Let $\C^k = C([0,T];\R^k)$ denote the set of continuous functions from $[0,T]$ to $\R^k$, endowed with the supremum norm $\|\cdot\|_T$, where
\[
\|x\|_t := \sup_{s \in [0,t]}|x_s|, \ t \in [0,T], \ x \in \C^k.
\]
For $\mu \in \P(\C^k)$, let $\mu_t$ denote the image of $\mu$ under the map $\C^k \ni x \mapsto x_t \in \R^k$. For $p \ge 0$ and a separable metric space $(E,d)$, let $\P^p(E)$ denote the set of $\mu \in \P(E)$ satisfying $\int_Ed^p(x,x_0)\mu(dx) < \infty$ for some (and thus for any) $x_0 \in E$. For $p \ge 1$ and $\mu,\nu \in \P^p(E)$, let $d_{E,p}$ denote the $p$-Wasserstein distance, given by
\begin{align}
d_{E,p}(\mu,\nu) := \inf\left\{\int_{E \times E}\gamma(dx,dy)d^p(x,y) : \gamma \in \P(E \times E) \text{ has marginals } \mu,\nu\right\}^{1/p}. \label{def:wasserstein}
\end{align}
Unless otherwise stated, the space $\P^p(E)$ is always equipped with the metric $d_{E,p}$. If $E$ is complete and separable, so is $(\P^p(E),d_{E,p})$. Appendix \ref{ap:wasserstein} complies some background and technical results regarding Wasserstein distances. 
Given $p \ge 0$ and $\mu \in \P(\R^k)$, we will make frequent use of the abbreviation $|\mu|^p := \int_{\R^k}|x|^p\mu(dx)$. Similarly, for $\mu \in \P(\C^k)$ we will write
\begin{align}
\|\mu\|^p_t := \int_{\C^k}\|x\|^p_t\mu(dx), \text{ for } \mu \in \P^p(\C^k). \label{def:measureconvention}
\end{align}
These abbreviations are avoided whenever they may cause confusion; for example, the reader will not be required to parse such expressions as $(|\mu|^p)^{1/p}$, which could conceivably stand for $(\int|z|^p\mu(dz))^{1/p}$.

The mean field game is specified by the following data. Let $A$ denote the control space, let $\lambda \in \P(\R^d)$ denote the initial state distribution, and let $p \ge 1$.
\begin{align*}
(b,\sigma,f) &: [0,T] \times \R^d \times \P^p(\R^d) \times A \rightarrow \R^d \times \R^{d \times m} \times \R \\ 
g &: \R^d \times \P^p(\R^d) \rightarrow \R
\end{align*}
We are given two additional constants $p',p_\sigma \ge 0$, the role of which will soon be clear.
The existence result is subject to the following main assumptions.

\begin{assumption}{\textbf{(A)}} \label{assumption:A} {\ }
\begin{enumerate}
\item[(A.1)] The functions $b$, $\sigma$, $f$, and $g$ of $(t,x,\mu,a)$ are measurable in $t$ and continuous in $(x,\mu,a)$.
\item[(A.2)] There exists $c_1 > 0$ such that, for all $(t,\mu,a) \in [0,T] \times \P^p(\R^d) \times A$ and all $x,y \in \R^d$,
\begin{align*}
|b(t,x,\mu,a) - b(t,y,\mu,a)| + |\sigma(t,x,\mu,a) - \sigma(t,y,\mu,a)| &\le c_1|x-y|,
\end{align*}
and
\begin{align*}
|b(t,x,\mu,a)| &\le c_1\left[1 + |x| + \left(\int_{\R^d}|z|^p\mu(dz)\right)^{1/p} + |a|\right], \\
|\sigma\sigma^\top(t,x,\mu,a)| &\le c_1\left[1 + |x|^{p_\sigma} + \left(\int_{\R^d}|z|^p\mu(dz)\right)^{p_\sigma/p} + |a|^{p_\sigma}\right]
\end{align*}
\item[(A.3)] There exist $c_2, c_3 > 0$ such that, for each $(t,x,\mu,a) \in [0,T] \times \R^d \times \P^p(\R^d) \times A$, 
\begin{align*}
|g(x,\mu)| &\le c_2\left(1 + |x|^p + |\mu|^p\right), \\
-c_2\left(1 + |x|^{p} + |\mu|^p + |a|^{p'}\right) \le f(t,x,\mu,a) &\le c_2\left(1 + |x|^p + |\mu|^p\right) - c_3|a|^{p'}
\end{align*}
\item[(A.4)] The control space $A$ is a closed subset of a Euclidean space. (More generally, as in \cite{haussmannlepeltier-existence}, a closed $\sigma$-compact subset of a Banach space would suffice.)
\item[(A.5)] The initial distribution $\lambda$ is in $\P^{p'}(\R^d)$, and the exponents satisfy $p' > p \ge 1 \vee p_\sigma$ and $p_\sigma \in [0,2]$.
\end{enumerate}
\end{assumption}

A typical case is $p' = 2$, $p=1$, and $p_\sigma=0$ (i.e. $\sigma$ bounded). Along the way, we will also treat the situation of compact $A$ and bounded $b$ and $\sigma$. Unfortunately, our assumptions to not cover all linear-quadratic models. When the objective $f$ is quadratic in the control $a$, we are forced to choose $p'=2$, and the constraint $p < p'$ forces $f$ and $g$ to be strictly subquadratic in $x$. However, this is not surprising in light of the counterexample discussed in Section \ref{se:counterexample}.

Our main existence theorem for mean field games requires one additional assumption, familiar in relaxed control theory ever since Filippov's work \cite{filippov-convexity}. Without this assumption, a form of the following main result involving relaxed controls still holds, but its statement requires additional technical developments which we postpone to Section \ref{se:relaxed}.

\begin{assumption}{\textbf{(Convex)}} \label{assumption:convex} 
For each $(t,x,\mu) \in [0,T] \times \R^d \times \P^p(\R^d)$, the subset
\[
K(t,x,\mu) := \left\{\left(b(t,x,\mu,a),\sigma\sigma^\top(t,x,\mu,a),z\right) : a \in A, \ z \le f(t,x,\mu,a)\right\}
\]
of $\R^d \times \R^{d \times d} \times \R$ is convex.
\end{assumption}

\begin{theorem} \label{th:mainexistence-intro}
Suppose assumptions \ref{assumption:A} and \ref{assumption:convex} hold. Then there exist $\mu \in \P^p(\C^d)$ and a measurable function $\hat{\alpha} : [0,T] \times \R^d \rightarrow A$ satisfying the following:
\begin{enumerate}
\item There exists a filtered probability space $(\Omega,\F_t,P)$ supporting an $m$-dimensional $\F_t$-Brownian motion $W$ and a $d$-dimensional $\F_t$-adapted process $X$ such that 
\begin{align*}
\begin{cases}
dX_t &= b(t,X_t,\mu_t,\hat{\alpha}(t,X_t))dt + \sigma(t,X_t,\mu_t,\hat{\alpha}(t,X_t))dW_t, \quad P \circ X_0^{-1} = \lambda, \\
P \circ X^{-1} &= \mu.
\end{cases}
\end{align*}
\item Suppose $(\Omega',\F'_t,P')$ is another filtered probability space supporting an $m$-dimensional $\F'_t$-Brownian motion $W'$, a $d$-dimensional $\F'_t$-adapted process $X'$, and a $\F'_t$-adapted process $\alpha'$ such that 
\begin{align*}
dX'_t &= b(t,X'_t,\mu_t,\alpha'_t)dt + \sigma(t,X'_t,\mu_t,\alpha'_t)dW'_t, \quad P \circ (X'_0)^{-1} = \lambda.
\end{align*}
Then
\begin{align*}
\E^P&\left[\int_0^Tf(t,X_t,\mu_t,\hat{\alpha}(t,X_t))dt + g(X_T,\mu_T)\right] \\
	&\ge \E^{P'}\left[\int_0^Tf(t,X'_t,\mu_t,\alpha'_t)dt + g(X'_T,\mu_T)\right].
\end{align*}
\end{enumerate}
\end{theorem}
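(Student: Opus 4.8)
The plan is to recast the representative agent's problem in relaxed form, obtain a relaxed mean field game equilibrium by a set-valued fixed point argument, and then upgrade it to the strict Markovian equilibrium demanded by the theorem, using a mimicking theorem together with assumption \ref{assumption:convex}. First, for a frozen flow $\mu = (\mu_t)_{t \in [0,T]}$, I would replace the control process $\alpha_t$ by a measure-valued process $q_t \in \P(A)$, requiring the state $X$ to solve the martingale problem associated with the averaged generator $\phi \mapsto \int_A L^a_t\phi(X_t)\,q_t(da)$, where $L^a_t$ has drift $b(t,\cdot,\mu_t,a)$ and diffusion matrix $\tfrac12\sigma\sigma^\top(t,\cdot,\mu_t,a)$, and with the objective becoming the linear functional $\E[\int_0^T\int_A f(t,X_t,\mu_t,a)\,q_t(da)\,dt + g(X_T,\mu_T)]$. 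Following El Karoui et al. and Haussmann--Lepeltier, I would verify that for each fixed $\mu$ the set of joint laws of controlled pairs solving this relaxed martingale problem is nonempty and (with the moment estimates below) compact, so that the linear objective attains its supremum on a nonempty compact convex set $\Phi(\mu) \subseteq \P^p(\C^d)$ of optimal state-laws.

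Next I would establish the moment bounds that turn $\Phi$ into a self-map on a compact convex set. Using the growth bounds of (A.2) on $b$ and $\sigma\sigma^\top$, the coercivity $f \le c_2(\cdots) - c_3|a|^{p'}$ from (A.3), the constraint $p_\sigma \in [0,2]$, and $\lambda \in \P^{p'}(\R^d)$ from (A.5), a Gr\"onwall and Burkholder--Davis--Gundy argument bounds $\E\|X\|^{p'}_T$ and the control moment $\E\int_0^T\int_A|a|^{p'}q_t(da)\,dt$ uniformly over optimal relaxed solutions, whenever $\mu$ lies in a fixed moment-bounded set. Since $p' > p$, these uniform $p'$-moment bounds supply uniform integrability of $\|x\|^p$, hence relative compactness in the $p$-Wasserstein topology, so $\Phi$ maps a suitable compact convex $\Q \subset \P^p(\C^d)$ into its own subsets. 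I would then show $\Phi$ has a closed graph: if $\mu^n \to \mu$ in $\P^p$ and $\nu^n \in \Phi(\mu^n)$ with $\nu^n \to \nu$, stability of the martingale problem under convergence of coefficients (continuity in $(x,\mu,a)$ from (A.1), Wasserstein continuity in $\mu$, and the uniform integrability from the slack $p'-p$) forces the limit law to be admissible and optimal for $\mu$, i.e. $\nu \in \Phi(\mu)$. With $\Phi$ nonempty-, convex-, compact-valued and upper hemicontinuous, the Kakutani--Fan--Glicksberg theorem yields $\mu \in \Phi(\mu)$, a relaxed MFG solution.

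Finally I would convert the relaxed equilibrium into the strict Markovian one. Given an optimal relaxed solution $(X,q)$ with $P\circ X^{-1} = \mu$, the mimicking theorem of Brunick and Shreve produces a process $\hat X$ solving an It\^o SDE with Markovian coefficients given by the conditional averages $\hat b(t,x) = \E[\int_A b(t,X_t,\mu_t,a)q_t(da)\mid X_t = x]$ and $\hat a(t,x) = \E[\int_A \sigma\sigma^\top(t,X_t,\mu_t,a)q_t(da)\mid X_t = x]$, with $\mathrm{Law}(\hat X_t) = \mathrm{Law}(X_t)$ for every $t$. Since the data depend on the mean field only through the time-$t$ marginal, the flow of marginals (hence the fixed point property, the initial law $\lambda$, and the value of $\E[g(\hat X_T,\mu_T)]$) is undisturbed. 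Assumption \ref{assumption:convex} now enters: because $K(t,x,\mu_t)$ is convex (and closed), the $\{X_t=x\}$-conditional barycenter of $(b,\sigma\sigma^\top,f)(t,X_t,\mu_t,\cdot)$ under $q_t$ lies in $K(t,x,\mu_t)$, so a Filippov-type measurable selection furnishes $\hat\alpha(t,x) \in A$ with $b(t,x,\mu_t,\hat\alpha(t,x)) = \hat b(t,x)$, $\sigma\sigma^\top(t,x,\mu_t,\hat\alpha(t,x)) = \hat a(t,x)$, and $f(t,x,\mu_t,\hat\alpha(t,x)) \ge \hat f(t,x)$. Thus $\hat X$ is a weak solution of the SDE in part (1), realizes the same $\mu$, and earns reward at least that of $(X,q)$. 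As strict controls embed in relaxed ones, the relaxed optimum dominates the right-hand side of part (2) for every competitor $(X',\alpha')$; chaining the inequalities gives the asserted optimality.

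I expect the principal obstacle to be the upper hemicontinuity of $\Phi$ in the $p$-Wasserstein topology. Showing that a weak limit of optimal relaxed controlled laws is again optimal requires passing to the limit simultaneously in the martingale-problem constraint (whose coefficients themselves move because $\mu^n \to \mu$) and in the reward functional; the growth of the data at rate $p$ in $x$ and $\mu$ means weak convergence alone cannot carry limits through the unbounded integrands $f$ and $g$, which is exactly why the slack $p' > p$ in (A.5) is imposed, furnishing the uniform integrability needed to justify each passage to the limit. A secondary subtlety is ensuring compatibility of the measurable selection with the mimicking step, namely that $\hat\alpha$ reproduces the mimicked coefficients \emph{exactly} rather than merely in law, so that the one-dimensional marginals, and therefore the fixed point, are preserved.
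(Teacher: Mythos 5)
Your architecture matches the paper's: relaxed controls and controlled martingale problems, a Kakutani--Fan--Glicksberg fixed point for the best-response correspondence, then the Brunick--Shreve mimicking theorem plus a Filippov-type measurable selection under assumption \ref{assumption:convex} to produce a strict Markovian control with the same time-marginals and at least the same reward. Your final stage is essentially the paper's Theorem \ref{th:markovselection} and is correct, including the two subtleties you flag (the fixed point survives because only time-marginals matter, and the selection must reproduce the mimicked coefficients exactly, not merely in law).

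The gap is in the fixed-point stage. The paper deliberately splits it in two: it first proves existence when $b,\sigma$ are bounded and $A$ is compact (Section \ref{se:boundedcoefficients}), where $\V[A]$ is compact, $J$ is \emph{continuous} (Lemma \ref{le:Jcontinuous}), and $\RC$ is a \emph{continuous, compact-valued} correspondence (Lemma \ref{le:rccontinuous}), so that Berge's maximum theorem gives upper hemicontinuity of $\RC^*$; it then reaches the general case by truncating $(b,\sigma,A)$ and passing to the limit along the truncated MFG solutions (Section \ref{se:unboundedcoefficients}). You instead run Kakutani directly on the unbounded problem, and at the decisive point you assert that if $\mu^n \to \mu$, $\nu^n \in \Phi(\mu^n)$, $\nu^n \to \nu$, then stability of the martingale problem ``forces the limit law to be admissible and optimal.'' Admissibility does follow from stability plus the uniform integrability furnished by $p' > p$, but \emph{optimality of the limit does not}: with $A$ non-compact, $J$ is only upper semicontinuous (the coercive term $-c_3|a|^{p'}$ destroys lower semicontinuity), $\RC(\mu)$ is not compact (your first paragraph's compactness claim holds only for coercivity-restricted sublevel sets), and Berge's theorem is unavailable. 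To prove $\nu$ optimal you must, for \emph{every} competitor $P' \in \RC(\mu)$, construct a recovery sequence $P'_n \in \RC(\mu^n)$ along which $J(\mu^n,P'_n) \to J(\mu,P')$ as an honest limit --- upper semicontinuity points the wrong way for competitors --- and only then does $J(\mu^n,\nu^n) \ge J(\mu^n,P'_n)$ pass to $J(\mu,\nu) \ge J(\mu,P')$. This is precisely the content of the paper's lower hemicontinuity argument (Lemma \ref{le:rccontinuous}, via strong solutions driven by the same martingale measure) and its unbounded analogue (Lemma \ref{le:limp'}, via the truncation maps $\iota_n$ and dominated convergence), and it is the mechanism your proposal neither names nor supplies. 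The plan is not unsalvageable --- a direct argument in the unbounded setting can be made to work --- but the step you yourself identify as the principal obstacle is asserted rather than proved, and the missing ingredient is exactly the competitor-approximation lemma around which the paper's two-stage structure is built.
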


\begin{remark}
A typical case of assumption \ref{assumption:convex} is when $\sigma$ is uncontrolled, $A$ is convex, the drift $b$ is affine in $a$, and $f$ is concave in $a$. Examples are somewhat less natural when the volatility is controlled, but we highlight some simple cases. A first example is when $A \subset \R^{d' \times m}$ is convex, $\sigma(t,x,\mu,a)=\tilde{\sigma}(t,x,\mu)a$ where $\tilde{\sigma}$ takes values in $\R^{d \times d'}$, $b$ is affine in $aa^\top$, and $f$ is concave in $aa^\top$. For a second example, suppose the drift and volatility are controlled \emph{separately}, in the sense that $A = A_1 \times A_2$, $b=b(t,x,\mu,a_1)$, and $\sigma=\sigma(t,x,\mu,a_2)$; then, assumption \ref{assumption:convex} holds if $b$ is affine in $a_1$, $\sigma$ is linear in $a_2$, and $f=f(t,x,\mu,(a_1,a_2))$ is concave in $(a_1,a_2a_2^\top)$. Of course, many non-affine examples exist, but they are not as easily summarized.
\end{remark}

\begin{remark}
There is a natural formulation of this result in terms of the following forward-backward PDE system, which, following Lasry and Lions \cite{lasrylionsmfg}, is often taken as the definition of a mean field game:
\begin{align*}
\begin{cases}
&-\partial_tv(t,x) -  H\left(t,x,\mu_t,Dv(t,x),D^2v(t,x)\right) = 0, \\
&\partial_t\mu_t(x) - \hat{L}^*[t,\mu_t]\mu_t(x) = 0, \\
&\mu_0 =\lambda, \quad v(T,x) = g(x,\mu_T),
\end{cases}
\end{align*}
where the Hamiltonian $H$ is defined by
\begin{align*}
H(t,x,\nu,y,z) &:= \sup_{a \in A} \left[y^\top b(t,x,\nu,a) + \frac{1}{2}\mathrm{Tr}[\sigma\sigma^\top(t,x,\nu,a)z] + f(t,x,\nu,a)\right],
\end{align*}
and, for each $(t,\nu)$, $\hat{L}^*[t,\nu]$ is the formal adjoint of the operator
\begin{align*}
\hat{L}[t,\nu]\phi(x) := b(t,x,\nu,\hat{\alpha}(t,x))^\top D\phi(x) + 
\frac{1}{2}\text{Tr}\left[\sigma\sigma^\top(t,x,\nu,\hat{\alpha}(t,x))D^2\phi(x)\right].
\end{align*}
Intuitively, the function $v(t,x)$ is the value function coming from the stochastic control problem faced in equilibrium by a representative agent, and the measure flow $\mu_t$ gives the time-$t$ distribution of a continuum of (independent, identically distributed) agents' state processes.
Under various assumptions one can conclude from our Theorem \ref{th:mainexistence-intro} that there exists a solution of this PDE system, where the forward equation for $\mu_t$ is solved in a weak sense and the backward equation for $v$ is either a classical or viscosity solution. One can then simplify the form of the equations by arguing that $\hat{\alpha}(t,x)$ should attain the supremum in $H(t,x,\mu_t,Dv(t,x),D^2v(t,x))$. Essentially, this depends only on being able to solve the HJB part of the equation when $\mu_t$ is treated as fixed.
\end{remark}

\section{The relaxed mean field game} \label{se:relaxed}
As is common when studying weak solutions of stochastic equations, we reformulate the mean field game problem of Theorem \ref{th:mainexistence-intro} on a canonical probability space.

\subsection{Relaxed controls}
Let $\V[A]$ denote the set of measures $q$ on $[0,T] \times A$ with first marginal equal to Lebesgue measure (i.e. $q([s,t] \times A) = t-s$ for $0 \le s < t \le T$) such that
\[
\int_{[0,T] \times A}q(dt,da)|a|^p < \infty.
\]
When $A$ is understood, we write simply $\V$. An element of $\V$ is called a \emph{relaxed control}. Endow $\V$ with the $p$-Wasserstein metric, adapted naturally from \eqref{def:wasserstein} as follows:
\begin{align}
d_{\V[A]}(q^1,q^2) := d_{[0,T] \times A,p}(q^1/T,q^2/T). \label{def:Vmetric}
\end{align}
This renders $\V$ a complete separable metric space, and when $A$ is compact so is $\V[A]$. We will frequently identify an element $q \in \V$ with the measurable map $t \mapsto q_t \in \P(A)$ arising from its disintegration $q(dt,da) = dtq_t(da)$, which is unique up to (Lebesgue) almost everywhere equality. Of particular interest are the \emph{strict controls}, which are of the form $q = dt\delta_{\alpha(t)}(da)$ for measurable $\alpha : [0,T] \rightarrow A$. Let $\Omega[A] := \V[A] \times \C^d$, endowed with its Borel $\sigma$-field, and again we abbreviate this to $\Omega$ when $A$ is understood. A generic element of $\Omega$ is denoted $(q,x)$, and the identity maps on $\V$ and $\C^d$ are denoted $\Lambda$ and $X$, respectively. Consider the filtrations
\[
\F^\Lambda_t := \sigma(1_{[0,t]}\Lambda) = \sigma\left(\Lambda(C) : C \in \B([0,t] \times A)\right)
\]
on $\V$ and $\F^X_t := \sigma(X_s : s \le t)$ on $\C^d$, along with the product $\F_t := \F^\Lambda_t \otimes \F^X_t$ defined on $\Omega$. The following notational convention will be used occasionally without mention. Given any spaces $E$, $E'$, and $F$ and any function $\phi : E \rightarrow F$, the same symbol $\phi$ will denote the natural extension of the function to $E \times E'$ given by $\phi(e,e') := \phi(e)$. In this way, $X$ is a process on both $\C^d$ and $\Omega$.

\begin{remark}
As with $\V$ and $\Omega$, much of the notation introduced below naturally depends on the data $(b,\sigma,f,g,A)$. The proof of Theorem \ref{th:mainexistence-intro} is done first for bounded coefficients and compact control space $A$, and the general case is proven by approximation. Thus it will be useful later to keep track of this dependence.
\end{remark}

We state here for future reference a reassuring technical lemma which will be useful in proving Theorem \ref{th:markovselection}. This lemma seems to be known and often used implicitly, but we sketch the proof for the reader's convenience, as a precise reference is difficult to locate.

\begin{lemma} \label{le:canonicalprocess}
There exists a $\F^{\Lambda}_t$-predictable process $\overline{\Lambda} : [0,T] \times \V \rightarrow \P(A)$ such that, for each $q \in \V$, $\overline{\Lambda}(t,q) = q_t$ for almost every $t \in [0,T]$. In particular, $q = dt\overline{\Lambda}(t,q)(da)$ for each $q \in \V$.
\end{lemma}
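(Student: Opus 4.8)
The plan is to build $\overline{\Lambda}$ as the limit of backward time-averages of $\Lambda$, which are manifestly predictable, and then to invoke a Lebesgue-type differentiation theorem to identify the limit with the disintegration. Fix a countable family $(\phi_k)_{k\in\N}$ of bounded continuous functions on $A$ that is convergence-determining for the weak topology on $\P(A)$; such a family exists because $A$, being a closed subset of a Euclidean space by (A.4), is Polish, and hence so is $\P(A)$. For $n\in\N$ and $(t,q)\in[0,T]\times\V$ with $t\ge 1/n$, define a measure on $A$ by
\[
\overline{\Lambda}^n(t,q)(B) := n\,q\big([t-1/n,t)\times B\big), \qquad B\in\B(A),
\]
and set $\overline{\Lambda}^n(t,q):=\nu_0$ for a fixed reference $\nu_0\in\P(A)$ when $t<1/n$. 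Since $q$ has first marginal equal to Lebesgue measure, $\overline{\Lambda}^n(t,q)(A)=n\cdot(1/n)=1$, so $\overline{\Lambda}^n(t,q)\in\P(A)$.

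The first task is to check that each $\overline{\Lambda}^n$ is predictable. For a bounded measurable $\phi$ on $A$ write $\Psi_\phi(t,q):=\int_{[0,t)\times A}\phi(a)\,q(ds,da)$; for fixed $q$ this is continuous (indeed absolutely continuous) in $t$, and for fixed $t$ it is $\F^\Lambda_t$-measurable in $q$, being the evaluation of $\Lambda$ against a function supported on $[0,t)\times A$. Consequently $\overline{\Lambda}^n(t,q)(\phi)=n\big(\Psi_\phi(t,q)-\Psi_\phi(t-1/n,q)\big)$ is jointly measurable in $(t,q)$, left-continuous in $t$, and $\F^\Lambda_t$-adapted; a left-continuous adapted process is predictable, so $(t,q)\mapsto \overline{\Lambda}^n(t,q)$ is an $\F^\Lambda_t$-predictable $\P(A)$-valued map.

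Next I would fix $q\in\V$ and identify the limit. Each $t\mapsto q_t(\phi_k)$ lies in $L^\infty[0,T]\subset L^1[0,T]$, so by the Lebesgue differentiation theorem almost every $t$ is a Lebesgue point, whence $\overline{\Lambda}^n(t,q)(\phi_k)=n\int_{t-1/n}^t q_s(\phi_k)\,ds\to q_t(\phi_k)$ as $n\to\infty$. Intersecting the countably many full-measure sets over $k$, we obtain for a.e.\ $t$ that $\overline{\Lambda}^n(t,q)\to q_t$ weakly, since $(\phi_k)$ is convergence-determining.

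Finally I would patch these approximants into a single everywhere-defined predictable map. Let $G$ be the set of $(t,q)$ at which the sequence $\big(\overline{\Lambda}^n(t,q)\big)_n$ converges in $\P(A)$; because convergence can be tested through the countable family $(\phi_k)$, $G$ is a predictable set, and on $G$ the map $(t,q)\mapsto\lim_n\overline{\Lambda}^n(t,q)$ is predictable as a pointwise limit of predictable maps. Define $\overline{\Lambda}:=\lim_n\overline{\Lambda}^n$ on $G$ and $\overline{\Lambda}:=\nu_0$ off $G$; this is $\F^\Lambda_t$-predictable, and by the previous paragraph, for each $q$ the slice $\{t:(t,q)\in G\}$ has full measure and there $\overline{\Lambda}(t,q)=q_t$, which is the claim. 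The main obstacle is precisely the tension between predictability (an everywhere-defined, past-measurable requirement) and the disintegration (defined only up to a.e.\ equality in $t$): the backward-averaging construction is what reconciles them, and the subtlety to handle with care is that the averaging intervals $[t-1/n,t)$ reach strictly into the past, so that each approximant is genuinely adapted and left-continuous, hence predictable, rather than merely progressively measurable.
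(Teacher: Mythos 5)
Your proof is correct and follows essentially the same route as the paper's: backward time-averages of $q$ (the paper's $F_\epsilon(t,q) = \frac{1}{t-(t-\epsilon)^+}\int_{(t-\epsilon)^+}^t q_s\,ds$), predictability of these approximants via adaptedness and continuity in $t$, then Lebesgue differentiation along a countable convergence-determining class and a default value $\nu_0$ off the convergence set. The only cosmetic differences are that the paper normalizes by the actual interval length near $t=0$, so its approximants are continuous in $t$ everywhere and the one-point failure of left-continuity you create at $t=1/n$ does not arise (it is harmless anyway, since a jump at a deterministic time with $\F^\Lambda_{t-}$-measurable value is still predictable), and that your predictability claim for the convergence set $G$ tacitly also requires testing tightness of the sequence (a countable, predictable condition since $A$ is $\sigma$-compact), not just convergence of the evaluations against $(\phi_k)_k$ — a point the paper's sketch elides as well.
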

\begin{proof}
Define $F_\epsilon : [0,T] \times \V \rightarrow \P(A)$ by
\[
F_\epsilon(t,q) :=  \frac{1}{t - (t-\epsilon)^+}\int_{(t-\epsilon)^+}^tq_sds.
\]
Then $F_\epsilon(\cdot,q)$ is continuous for each $q \in \V$, and $F_\epsilon(t,\cdot)$ is $\F^\Lambda_t$-measurable for each $t \in [0,T]$. Hence $F_\epsilon$ is $\F^\Lambda_t$-predictable. Fix arbitrarily $q^0 \in \P(A)$, and for each $(t,q) \in [0,T] \times \V$ define
\[
\overline{\Lambda}(t,q) := \begin{cases}
\lim_{\epsilon \downarrow 0}F_\epsilon(t,q) & \text{ if the limit exists}, \\
q^0 &\text{ otherwise}.
\end{cases}
\]
Then $\overline{\Lambda}$ is predictable, and it follows from Lebesgue's differentiation theorem (arguing with a countable convergence-determining class of functions on $A$) that $\overline{\Lambda}(t,q) = q_t$ for almost every $t \in [0,T]$.
\end{proof}

We will abuse notation somewhat by writing $\Lambda_t = \overline{\Lambda}(t,\cdot)$ for the canonical process on $\V$ given by Lemma \ref{le:canonicalprocess}. This way, $\Lambda = dt\Lambda_t(da)$.

\subsection{Controlled martingale problems and MFG solutions}
The controlled state process will be described by way of its infinitesimal generator. Let $C^\infty_0(\R^d)$ denote the set of infinitely differentiable functions $\phi : \R^d \rightarrow \R$ with compact support, and let $D\phi$ and $D^2\phi$ denote the gradient and Hessian of $\phi$, respectively. Define the generator $L = L[b,\sigma,A]$ on $\phi \in C^\infty_0(\R^d)$ by
\[
L\phi(t,x,\mu,a) = b(t,x,\mu,a)^\top D\phi(x) + 
\frac{1}{2}\text{Tr}\left[\sigma\sigma^\top(t,x,\mu,a)D^2\phi(x)\right],
\]
for $(t,x,\mu,a) \in [0,T] \times \R^d \times \P^p(\R^d) \times A$. For $\phi \in C^\infty_0(\R^d)$ and $\mu \in \P^p(\C^d)$, define $M^{\mu,\phi}_t = M^{\mu,\phi}_t[b,\sigma,A] : \Omega \rightarrow \R$ by
\[
M^{\mu,\phi}_t(q,x) := \phi(x_t) - \int_{[0,t] \times A}q(ds,da)L\phi(s,x_s,\mu_s,a).
\]
Define the objective functional $\Gamma^\mu = \Gamma^\mu[f,g,A] : \Omega \rightarrow \R$ by
\[
\Gamma^\mu(q,x) := g(x_T,\mu_T) + \int_{[0,T] \times A}q(dt,da)f(t,x_t,\mu_t,a).
\]
\begin{definition}
For a measure $\mu \in \P^p(\C^d)$, let $\RC[b,\sigma,A](\mu)$ denote the set of $P \in \P(\Omega[A])$ satisfying the following: 
\begin{enumerate}
\item $P \circ X_0^{-1} = \lambda$
\item $\E^P\int_0^T|\Lambda_t|^pdt < \infty$.
\item $M^{\mu,\phi} = (M^{\mu,\phi}_t)_{t \in [0,T]}$ is a $P$-martingale for each $\phi \in C^\infty_0(\R^d)$.
\end{enumerate}
\end{definition}

As before, we abbreviate $\RC[b,\sigma,A](\mu)$ to $\RC(\mu)$ when the data is clear; this is the set of admissible joint laws of control-state pairs $(\Lambda,X)$. Define $J = J[f,g,A] : \P^p(\C^d) \times \P^p(\Omega) \rightarrow \R \cup \{-\infty\}$ and $\RC^* = \RC^*[b,\sigma,f,g,A] : \P^p(\C^d) \rightarrow 2^{\P(\Omega)}$ by
\begin{align*}
J(\mu,P) &:= \int_{\Omega}\Gamma^\mu\,dP, \\
\RC^*(\mu) &:= \arg\max_{P \in \RC(\mu)}J(\mu,P).
\end{align*}
Note that when $\mu \in \P^p(\C^d)$ and $P \in \P^p(\Omega)$, the upper bounds on $f$ and $g$ of assumption (A.3) ensure that the positive part of $\Gamma^\mu$ is $P$-integrable. Hence, $J$ is well-defined.
Using the growth assumptions (A.2) on the coefficients $(b,\sigma)$, Lemma \ref{le:stateestimate} below shows that $\RC(\mu) \subset \P^p(\Omega)$ for each $\mu \in \P^p(\C^d)$, so that $\RC^*(\mu)$ is also well-defined. 
A priori, $\RC^*(\mu)$ may be empty.

We say $P \in \P^p(\Omega)$ is a \emph{relaxed mean field game (MFG) solution} if $P \in \RC^*(P \circ X^{-1})$. We may also refer to the measure $P \circ X^{-1}$ on $\C^d$ itself as a relaxed MFG solution.
In other words, a relaxed MFG solution can be seen as a fixed point of the set-valued map
\[
\P^p(\C^d) \ni \mu \mapsto \left\{P \circ X^{-1} : P \in \RC^*(\mu)\right\} \in 2^{\P^p(\C^d)}.
\]
We say a measure $P \in \P(\Omega)$ \emph{corresponds to a strict control} if its $\V$-marginal is concentrated on the set of strict controls; that is, there exists an $\F_t$-progressively measurable $A$-valued process $\alpha_t$ on $\Omega$ such that $P(\Lambda = dt\delta_{\alpha_t}) = 1$.
On the other hand, $P$ \emph{corresponds to a relaxed Markovian control} if there exists a measurable function $\hat{q} : [0,T] \times \R^d \rightarrow \P(A)$ such that $P(\Lambda = dt\hat{q}(t,X_t)(da)) = 1$.
Finally, $P$ \emph{corresponds to a strict Markovian control} if there exists a measurable function $\hat{\alpha} : [0,T] \times \R^d \rightarrow A$ such that $P(\Lambda = dt\delta_{\hat{\alpha}(t,X_t)}(da)) = 1$.
If a relaxed MFG solution $P$ corresponds to a relaxed Markovian (resp. strict Markovian) control, then we say $P$ is a \emph{relaxed Markovian MFG soluiton} (resp. \emph{strict Markovian MFG solution}).

\begin{remark} \label{re:objectivegeneralization0}
In fact, the existence theorem for relaxed MFG solutions, Theorem \ref{th:relaxedexistence}, can be extended to include more general objective structures, such as risk-sensitive or mean-variance objectives. See Remark  \ref{re:objectivegeneralization2}. For the sake of simplicity, we stick with the more standard running-terminal objective structure.
\end{remark}

It is sometimes more convenient to represent $\RC(\mu)$ in terms of stochastic differential equations. To do this in general with control in the volatility requires some use of martingale measures. The uninitiated reader is referred to Walsh's monograph \cite{walsh-introspde} for a careful treatment, although the results we need are all found in the paper of El Karoui and M\'el\'eard \cite{elkarouimeleard-martingalemeasure}, and we use the precise terminology of the latter paper. If one is willing to assume $\sigma$ is uncontrolled, then there is no need for martingale measures, and one may replace $N(da,dt)$ with $dW_t$ in the following proposition.

\begin{proposition}[Theorem IV-2 of \cite{elkarouimeleard-martingalemeasure}] \label{pr:sderepresentation}
For $\mu \in \P^p(\C^d)$, $\RC(\mu)$ is precisely the set of laws $P' \circ (\Lambda,X)^{-1}$, where:
\begin{enumerate}
\item $(\Omega',\F'_t,P')$ is a filtered probability space supporting a $d$-dimensional $\F'_t$-adapted process $X$ as well as $m$ orthogonal $\F'_t$-martingale measures $N = (N^1,\ldots,N^m)$ on $A \times [0,T]$, each with intensity $\Lambda_t(da)dt$.
\item $P' \circ X_0^{-1} = \lambda$.
\item $\E^{P'}\int_0^T|\Lambda_t|^pdt < \infty$.
\item The state equation holds:
\begin{align}
dX_t &= \int_Ab(t,X_t,\mu_t,a)\Lambda_t(da)dt + \int_A\sigma(t,X_t,\mu_t,a)N(da,dt). \label{def:SDE}
\end{align}
\end{enumerate}
\end{proposition}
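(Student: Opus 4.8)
The plan is to prove both inclusions, treating the statement as the standard equivalence between a controlled martingale problem and a weak solution of a stochastic differential equation, with the twist that the driving noise is a family of orthogonal martingale measures rather than a Brownian motion. The ``easy'' direction shows that every law arising from the state equation \eqref{def:SDE} belongs to $\RC(\mu)$; the ``hard'' direction constructs, from an arbitrary $P \in \RC(\mu)$, suitable martingale measures realizing \eqref{def:SDE} on a possibly enlarged probability space.

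For the first direction, I would fix a filtered space carrying $X$ and orthogonal martingale measures $N^1,\dots,N^m$ of intensity $\Lambda_t(da)dt$ solving \eqref{def:SDE}, fix $\phi \in C^\infty_0(\R^d)$, and apply It\^o's formula for integrals against martingale measures to $\phi(X_t)$. The first-order term reproduces $\int_A b^\top D\phi(X_t)\,\Lambda_t(da)\,dt$, while orthogonality of the $N^k$ and their prescribed intensity yield
\[
d\Big\langle \sum_k\int_A \sigma^{ik}N^k(da,\cdot),\ \sum_l\int_A \sigma^{jl}N^l(da,\cdot)\Big\rangle_t = \int_A (\sigma\sigma^\top)^{ij}(t,X_t,\mu_t,a)\,\Lambda_t(da)\,dt,
\]
so the finite-variation part of $\phi(X_t)$ is exactly $\int_{[0,t]\times A}\Lambda(ds,da)\,L\phi(s,X_s,\mu_s,a)$. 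Thus $M^{\mu,\phi}$ is a local martingale, and since $\phi$ has compact support the growth bound (A.2) and the integrability assumption on $\Lambda$ make it genuinely integrable, hence a true martingale; the remaining properties ($P' \circ X_0^{-1} = \lambda$ and the $p$-moment bound on $\Lambda$) are inherited directly, so $P' \circ (\Lambda,X)^{-1} \in \RC(\mu)$.

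For the converse, take $P \in \RC(\mu)$ and first extend the admissible test functions from $C^\infty_0(\R^d)$ to the coordinate maps $x \mapsto x^i$ and products $x \mapsto x^ix^j$. This is a routine localization: stopping at $\tau_n = \inf\{t : |X_t| \ge n\}$ and using functions in $C^\infty_0(\R^d)$ that agree with $x^i$ (resp. $x^ix^j$) on $\{|x| \le n\}$ shows that $M^i_t := X^i_t - \int_{[0,t]\times A}\Lambda(ds,da)\,b^i(s,X_s,\mu_s,a)$ is a continuous local martingale with $\langle M^i,M^j\rangle_t = \int_{[0,t]\times A}\Lambda(ds,da)\,(\sigma\sigma^\top)^{ij}(s,X_s,\mu_s,a)$. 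The problem then reduces to representing the continuous $\R^d$-valued local martingale $M$, whose bracket density is $\int_A \sigma\sigma^\top(t,X_t,\mu_t,a)\,\Lambda_t(da)$, in the form $\int_A \sigma(t,X_t,\mu_t,a)\,N(da,dt)$ for orthogonal martingale measures $N$ of intensity $\Lambda_t(da)dt$.

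This representation is the crux and the main obstacle, and it is exactly the martingale-measure representation theorem of El Karoui and M\'el\'eard. The difficulty is that $\sigma(t,X_t,\mu_t,a)$ is generally degenerate (and typically $d \neq m$), so $N$ cannot be recovered by inverting $\sigma$; instead one enlarges the probability space by an auxiliary family of independent orthogonal martingale measures of the same intensity $\Lambda_t(da)dt$, and defines $N$ by using $M$ in the range of $\sigma$ and the auxiliary noise in the orthogonal complement, verifying the intensity and orthogonality through bracket computations. Since this enlargement alters only the driving noise and leaves the pair $(\Lambda,X)$ untouched, the law $P = P' \circ (\Lambda,X)^{-1}$ is unchanged, which is precisely why the proposition characterizes $\RC(\mu)$ as a \emph{set of laws}; I would invoke \cite{elkarouimeleard-martingalemeasure} for this construction rather than reproduce it.
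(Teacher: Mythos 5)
The paper does not prove this proposition at all---it is stated as a direct citation of Theorem IV-2 of El Karoui--M\'el\'eard \cite{elkarouimeleard-martingalemeasure}---and your proposal is precisely the standard argument behind that citation: It\^o's formula for martingale measures gives the easy inclusion, and localization to recover the semimartingale decomposition of $X$ followed by the martingale-measure representation theorem (deferred to the same reference) gives the converse, so your approach is correct and consistent with the paper's treatment. The one loose phrase is ``genuinely integrable, hence a true martingale'': integrability alone does not upgrade a local martingale to a martingale; instead note that on the compact support of $D\phi$ the bound on $\sigma\sigma^\top$ together with $p_\sigma \le p$ and $\E^{P'}\int_0^T|\Lambda_t|^p\,dt < \infty$ makes the expected quadratic variation of the stochastic-integral part finite, which is what actually yields the martingale property.
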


On any filtered probability space satisfying (1-3) of Proposition \ref{pr:sderepresentation}, the Lipschitz and growth assumptions of \ref{assumption:A} ensure that there exists a unique strong solution of \eqref{def:SDE}.

\subsection{Main results}
The following are the main results of the paper, with Theorem \ref{th:mainexistence-intro} following from Corollary \ref{co:strictmarkovexistence}. The rest of the section contains the proof of Theorem \ref{th:markovselection}, while Sections \ref{se:boundedcoefficients} and \ref{se:unboundedcoefficients} are devoted to proving Theorem \ref{th:relaxedexistence}.

\begin{theorem} \label{th:relaxedexistence}
Under assumption \ref{assumption:A}, there exists a relaxed MFG solution.
\end{theorem}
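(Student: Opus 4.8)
The plan is to produce the fixed point via Kakutani's theorem applied to the best-response correspondence $\Phi(\mu) := \{P \circ X^{-1} : P \in \RC^*(\mu)\}$, carried out in two phases matching the structure of Sections \ref{se:boundedcoefficients} and \ref{se:unboundedcoefficients}: first under the simplifying hypotheses that $A$ is compact and $b,\sigma$ are bounded, and then removing these by an approximation procedure. In the bounded case the first task is to exhibit a nonempty, convex, compact set $\Q \subset \P^p(\C^d)$ that is invariant, in the sense that $\Phi(\mu) \subset \Q$ for every $\mu \in \Q$. The a priori state estimate of Lemma \ref{le:stateestimate}, together with boundedness of the coefficients, yields uniform moment and modulus-of-continuity bounds on the state laws arising from $\RC(\mu)$; hence the family $\{P \circ X^{-1} : P \in \RC(\mu),\ \mu \in \P^p(\C^d)\}$ lies in a fixed tight, moment-bounded set, and I would take $\Q$ to be its closed convex hull, which is compact in $\P^p(\C^d)$.

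Next I would verify the Kakutani hypotheses for $\Phi$ on $\Q$. For \emph{nonempty values}, note that $\RC(\mu)$ is nonempty (e.g. a constant control gives a weak solution of the state equation via Proposition \ref{pr:sderepresentation}), is convex since it is cut out by the \emph{linear} martingale conditions $M^{\mu,\phi}$ together with the moment constraint, and is \emph{compact}: tightness of the state marginals comes from Lemma \ref{le:stateestimate}, tightness of the control marginals is automatic when $A$ is compact, and the martingale property is preserved under weak limits. Since $J(\mu,\cdot)$ is affine in $P$ and upper semicontinuous (by the upper growth bounds in (A.3) and Fatou), the maximum is attained, so $\RC^*(\mu) \neq \emptyset$; affineness of $J(\mu,\cdot)$ and convexity of $\RC(\mu)$ make $\RC^*(\mu)$ convex, a property preserved by the pushforward under $X$. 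The delicate condition is the \emph{closed graph} of $\Phi$. Given $\mu^n \to \mu$ in $\Q$, $P^n \in \RC^*(\mu^n)$, and $P^n \to P$ weakly, feasibility $P \in \RC(\mu)$ follows from stability of the martingale problem, using that $L\phi$ is continuous in $(x,\mu,a)$ and that Wasserstein continuity lets the functionals $M^{\mu^n,\phi}$ converge suitably to $M^{\mu,\phi}$. Optimality of the limit is the crux: I would establish that $\mu \mapsto \RC(\mu)$ is continuous in both the upper and lower hemicontinuous senses — the lower one via a recovery-sequence construction, fixing the control and driving martingale measures and re-solving \eqref{def:SDE} with coefficients evaluated at $\mu^n$, whose stability under coefficient perturbation follows from strong well-posedness — and then conclude that $\RC^*$ is upper hemicontinuous by a Berge-type maximum argument combined with upper semicontinuity of $J$.

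For the \emph{unbounded} case I would approximate by truncation: replace $A$ with the compact sets $A \cap \{|a| \le n\}$ and $b,\sigma$ with bounded coefficients $b^n,\sigma^n$ agreeing with $b,\sigma$ on large balls, obtaining relaxed MFG solutions $P^n$ from the bounded case. The heart of this step is a \emph{uniform moment estimate} on the optimal controls: comparing the optimal value against that of a fixed simple control and invoking the coercive penalty $f \le c_2(1+|x|^p+|\mu|^p) - c_3|a|^{p'}$ from (A.3), together with $\lambda \in \P^{p'}$ and the strict inequality $p' > p$ from (A.5), yields $\sup_n \E^{P^n}\int_0^T |\Lambda_t|^{p'}\,dt < \infty$. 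Since $p' > p$, this bound forces uniform integrability of the $p$-th control moments, hence tightness of $\{P^n\}$ in $\P^p(\Omega)$ and no loss of mass in the control variable; extracting a weak limit $P^n \to P$, I would verify that $\mu := P \circ X^{-1}$ and $P \in \RC^*(\mu)$ by passing to the limit in the martingale problem (using $b^n,\sigma^n \to b,\sigma$ uniformly on compacts plus the uniform integrability to control the tails) and in the optimality inequality, giving the desired relaxed MFG solution.

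I expect the principal obstacles to be two linked points. The conceptual one is the closed-graph property of $\RC^*$: because the input $\mu$ enters both the objective $\Gamma^\mu$ and the feasible set $\RC(\mu)$, stability of optimizers requires simultaneous control of both, and in particular the lower-hemicontinuity (recovery-sequence) half relies essentially on the SDE representation of Proposition \ref{pr:sderepresentation} and stability of strong solutions. The technical one, which I anticipate being the more demanding in execution, is the uniform $p'$-moment estimate on optimal relaxed controls in the approximation: absent the coercive penalty $-c_3|a|^{p'}$ and the gap $p' > p$, control mass could escape to infinity, destroying tightness and the passage to the limit in the untruncated martingale problem.
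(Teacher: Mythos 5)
Your proposal follows essentially the same two-phase strategy as the paper: a Kakutani--Fan--Glicksberg fixed point argument for the bounded-coefficient, compact-$A$ case (compact convex invariant set of state laws, continuity of $\RC$ with the lower-hemicontinuity half proved by fixing the control and re-solving the SDE at $\mu^n$, then Berge's maximum theorem for $\RC^*$), followed by truncation of $(b,\sigma,A)$ and a uniform $p'$-moment estimate on the optimal controls obtained by comparison with a fixed constant control together with the coercive bound $f \le c_2(1+|x|^p+|\mu|^p) - c_3|a|^{p'}$ and $p' > p$. The two obstacles you single out are precisely the paper's Lemma \ref{le:rccontinuous}/Lemma \ref{le:limp'} (continuity of $\RC$ and the recovery sequences for optimality of the limit) and Lemma \ref{le:tight+moments} (the uniform $p'$-estimate), so the approaches coincide.
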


\begin{theorem} \label{th:markovselection}
Suppose \ref{assumption:A} holds. Let $\mu \in \P^p(\C^d)$ and $P \in \RC(\mu)$. Then there exist a measurable function $\hat{q} : [0,T] \times \R^d \rightarrow \P(A)$ and $P_0 \in \RC(\mu)$ such that:
\begin{enumerate}
\item $P_0(\Lambda = dt\hat{q}(t,X_t)(da)) = 1$. 
\item $J(\mu,P_0) \ge J(\mu,P)$.
\item $P_0 \circ X_t^{-1} = P \circ X_t^{-1}$ for all $t \in [0,T]$.
\end{enumerate}
If also \ref{assumption:convex} holds, we can choose $\hat{q}$ of the form $\hat{q}(t,x) = \delta_{\hat{\alpha}(t,x)}$, for some measurable function $\hat{\alpha} : [0,T] \times \R^d \rightarrow A$.
\end{theorem}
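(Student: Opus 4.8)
The plan is to build the mimicking Markovian control by conditioning the given relaxed control on the current state, and then to invoke a mimicking theorem to produce a Markovian process with the same one-dimensional marginals. First I would fix $P \in \RC(\mu)$ and disintegrate: letting $m(dt,dx) := dt\,(P\circ X_t^{-1})(dx)$ on $[0,T]\times\R^d$, I would form the measure $\Theta$ on $[0,T]\times\R^d\times A$ defined by $\int \psi\,d\Theta := \E^P\int_0^T\!\int_A \psi(t,X_t,a)\,\Lambda_t(da)\,dt$, whose $[0,T]\times\R^d$-marginal is $m$, and disintegrate it as $\Theta(dt,dx,da) = m(dt,dx)\,\hat q(t,x)(da)$. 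This yields a measurable $\hat q : [0,T]\times\R^d \to \P(A)$, which plays the role of $\E^P[\Lambda_t \mid X_t = x]$. By construction, for $m$-almost every $(t,x)$,
\[
\int_A \xi(t,x,\mu_t,a)\,\hat q(t,x)(da) = \E^P\!\left[\int_A \xi(t,X_t,\mu_t,a)\,\Lambda_t(da)\,\Big|\,X_t = x\right]
\]
simultaneously for $\xi \in \{b,\sigma\sigma^\top,f\}$; denote the right-hand sides $\hat b(t,x)$, $\hat a(t,x)$, $\hat f(t,x)$.

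Next I would realize $X$ under $P$ as an It\^o process. By Proposition \ref{pr:sderepresentation}, on a suitable extension $X$ satisfies \eqref{def:SDE}, so $X$ has drift $\beta_t := \int_A b(t,X_t,\mu_t,a)\Lambda_t(da)$ and diffusion matrix $\alpha_t := \int_A \sigma\sigma^\top(t,X_t,\mu_t,a)\Lambda_t(da)$, both adapted and integrable by the growth bounds of \ref{assumption:A} together with the moment estimate (Lemma \ref{le:stateestimate}). The mimicking theorem of Brunick and Shreve \cite{brunickshreve-mimicking} then furnishes a filtered probability space carrying a continuous process $\tilde X$ which solves the martingale problem for the Markovian generator $\tilde L\phi(t,x) := \hat b(t,x)^\top D\phi(x) + \tfrac12\mathrm{Tr}[\hat a(t,x)D^2\phi(x)]$ and satisfies $\tilde X_t \stackrel{d}{=} X_t$ for every $t$, with $\tilde X_0 \sim \lambda$. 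I expect the verification of the hypotheses of this theorem --- exhibiting $X$ as a genuine It\^o process with integrable conditional coefficients and extracting the measurable Markovian projection --- to be the main technical point.

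I would then define $P_0$ to be the law of $(dt\,\hat q(t,\tilde X_t)(da),\tilde X)$. Since $\int_A L\phi(t,x,\mu_t,a)\hat q(t,x)(da) = \tilde L\phi(t,x)$, the process $M^{\mu,\phi}_t = \phi(\tilde X_t) - \int_0^t \tilde L\phi(s,\tilde X_s)\,ds$ is a $P_0$-martingale for each $\phi \in C^\infty_0(\R^d)$; together with $\tilde X_0\sim\lambda$ and the $p$-th moment bound inherited from $P$ (since $\tilde X_t \stackrel{d}{=} X_t$ and $\hat q$ averages $\Lambda_t$), this gives $P_0 \in \RC(\mu)$ and property (1). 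Property (3) is immediate because $\tilde X_t \stackrel{d}{=} X_t$. For (2), equality of marginals and the tower property give $\E^{P_0}[g(X_T,\mu_T)] = \E^P[g(X_T,\mu_T)]$ and $\E^{P_0}\int_0^T\hat f(t,X_t)\,dt = \E^P\int_0^T\!\int_A f(t,X_t,\mu_t,a)\Lambda_t(da)\,dt$, whence in fact $J(\mu,P_0) = J(\mu,P)$.

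Finally, under \ref{assumption:convex} I would upgrade $\hat q$ to a Dirac mass. The point is that $(\hat b(t,x),\hat a(t,x),\hat f(t,x))$ is the barycenter of the pushforward of $\hat q(t,x)$ under $a\mapsto(b,\sigma\sigma^\top,f)(t,x,\mu_t,a)$, a probability measure supported on $K(t,x,\mu_t)$. The coercivity of $f$ in $a$ from (A.3) (namely $f\le c_2(1+|x|^p+|\mu|^p) - c_3|a|^{p'}$) forces any sequence in $K$ with bounded third coordinate to stay in a compact subset of $A$, so $K(t,x,\mu_t)$ is closed; being also convex, it contains its barycenter. Hence for each $(t,x)$ there exists $a^\ast$ with $b(t,x,\mu_t,a^\ast)=\hat b$, $\sigma\sigma^\top(t,x,\mu_t,a^\ast)=\hat a$, and $f(t,x,\mu_t,a^\ast)\ge\hat f$. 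A measurable selection theorem (as in \cite{haussmannlepeltier-existence}) applied to the set-valued map of such $a^\ast$, which has closed values and measurable graph, produces a measurable $\hat\alpha:[0,T]\times\R^d\to A$; the coercivity bound again controls $\int_0^T|\hat\alpha(t,\tilde X_t)|^{p'}dt$. Replacing $\hat q(t,x)$ by $\delta_{\hat\alpha(t,x)}$ leaves $\hat b,\hat a$ unchanged, so the new law is still in $\RC(\mu)$ with the same marginals, while $f(t,x,\mu_t,\hat\alpha)\ge\hat f$ ensures the objective does not decrease.
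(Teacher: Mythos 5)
Your proposal is correct and follows essentially the same route as the paper: the same disintegration defining $\hat q$ as a conditional law of $\Lambda_t$ given $(t,X_t)$, the same appeal to the Brunick--Shreve mimicking theorem to produce a Markovian process with identical one-dimensional marginals (yielding (2) with equality in the relaxed case), and the same closedness/convexity of $K(t,x,\mu_t)$ plus measurable selection to upgrade to a strict Markovian control. The only cosmetic difference is that the paper realizes $X$ as a Brownian-driven It\^o process via El Karoui et al.'s construction of a square root $\bar\sigma(t,x,\mu,q)$ of the averaged diffusion matrix, whereas you start from the martingale-measure representation of Proposition \ref{pr:sderepresentation} and must still rewrite the martingale part as a stochastic integral against a Brownian motion on an extension --- precisely the technical point you flagged.
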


In words, Theorem \ref{th:markovselection} says that for any control there exists a Markovian control (1) producing a greater reward (2) without altering the marginal distributions of the state process (3). When \ref{assumption:convex} holds, the new Markovian control can also be taken to be \emph{strict}.

\begin{corollary} \label{co:strictmarkovexistence}
Under assumption \ref{assumption:A}, there exists a relaxed Markovian MFG solution. Under assumptions \ref{assumption:A} and \ref{assumption:convex}, there exists a strict Markovian MFG solution.
\end{corollary}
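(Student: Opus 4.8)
The plan is to combine the two preceding existence/selection results, the one crucial observation being that the entire optimization data attached to a measure flow $\nu \in \P^p(\C^d)$ depends on $\nu$ only through its time-marginals $(\nu_t)_{t \in [0,T]}$. First I would invoke Theorem \ref{th:relaxedexistence} to obtain a relaxed MFG solution $P$, and set $\mu := P \circ X^{-1} \in \P^p(\C^d)$, so that by definition $P \in \RC^*(\mu)$. Applying Theorem \ref{th:markovselection} to the pair $(\mu,P)$ then produces a measurable map $\hat{q} : [0,T] \times \R^d \to \P(A)$ and a measure $P_0 \in \RC(\mu)$ satisfying properties (1)--(3) of that theorem; in particular $P_0$ corresponds to a relaxed Markovian control. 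The whole task reduces to checking that $P_0$ is again a relaxed MFG solution, i.e. that $P_0 \in \RC^*(\mu^0)$ where $\mu^0 := P_0 \circ X^{-1}$.

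The key step is the marginal-dependence remark. I would record that both the admissible set $\RC(\nu)$ and the objective $J(\nu,\cdot)$ depend on $\nu$ only via $(\nu_t)_t$: the martingale condition defining $\RC(\nu)$ enters only through $L\phi(s,x_s,\nu_s,a)$, and $\Gamma^\nu$ enters only through $g(x_T,\nu_T)$ and $f(t,x_t,\nu_t,a)$. Consequently the optimizer set $\RC^*(\nu)$ also depends only on $(\nu_t)_t$. Now property (3) of Theorem \ref{th:markovselection} gives $\mu^0_t = P_0 \circ X_t^{-1} = P \circ X_t^{-1} = \mu_t$ for every $t \in [0,T]$, whence $\RC^*(\mu^0) = \RC^*(\mu)$. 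This is precisely the point where the marginal-preserving nature of the Markovian selection is exactly strong enough: $P_0$ has in general a different law on path space than $P$, but the fixed-point condition only ``sees'' the one-dimensional marginals.

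It then remains to show $P_0 \in \RC^*(\mu)$. By construction $P_0 \in \RC(\mu)$, and since $P \in \RC^*(\mu)$ maximizes $J(\mu,\cdot)$ over $\RC(\mu)$, property (2) yields $J(\mu,P_0) \ge J(\mu,P) = \max_{P' \in \RC(\mu)} J(\mu,P')$. Hence $P_0$ itself attains the maximum, so $P_0 \in \RC^*(\mu) = \RC^*(\mu^0)$, meaning $P_0$ is a relaxed Markovian MFG solution. Under the additional assumption \ref{assumption:convex}, Theorem \ref{th:markovselection} furnishes $\hat{q}$ of the form $\hat{q}(t,x) = \delta_{\hat{\alpha}(t,x)}$, so the same $P_0$ corresponds to a strict Markovian control and is thus a strict Markovian MFG solution. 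I expect no genuine obstacle in this argument; the only substantive content is the marginal-dependence observation, and everything else is a direct assembly of the two cited theorems.
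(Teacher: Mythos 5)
Your proposal is correct and takes essentially the same route as the paper: obtain a relaxed MFG solution via Theorem \ref{th:relaxedexistence}, apply Theorem \ref{th:markovselection} to get $P_0$, use $J(\mu,P_0) \ge J(\mu,P)$ to place $P_0 \in \RC^*(\mu)$, and use the equality of time-marginals to identify $\RC^*(\mu) = \RC^*(\mu^0)$. The only (harmless) difference is that you spell out explicitly the observation—left implicit in the paper—that $\RC(\nu)$, $J(\nu,\cdot)$, and hence $\RC^*(\nu)$ depend on $\nu$ only through $(\nu_t)_{t\in[0,T]}$.
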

\begin{proof}
Let $P$ be a relaxed MFG solution. Let $P_0$ be as in Theorem \ref{th:markovselection}. Since $P \in \RC^*(\mu)$ and $J(\mu,P_0) \ge J(\mu,P)$, we have $P_0 \in \RC^*(\mu)$. Let $\mu^0 := P_0 \circ X^{-1}$. Then $\mu^0_t = P_0 \circ X_t^{-1} = P \circ X_t^{-1} = \mu_t$ for all $t \in [0,T]$, and it follows that $\RC(\mu) = \RC(\mu^0)$, $J(\mu^0,\cdot) \equiv J(\mu,\cdot)$, and $\RC^*(\mu) = \RC^*(\mu^0)$. Thus $P_0 \in \RC^*(\mu^0)$.
\end{proof}

\begin{proof}[Proof of Theorem \ref{th:markovselection}]
As in \cite[Theorem 2.5(a)]{elkaroui-compactification}, we may find $\bar{m}$ and a measurable function $\bar{\sigma} : [0,T] \times \R^d \times \P^p(\R^d) \times \P(A) \rightarrow \R^{d \times \bar{m}}$ such that $\bar{\sigma}(t,x,\mu,q)$ is continuous in $(x,\mu,q)$ for each $t$,
\[
\bar{\sigma}\bar{\sigma}^\top(t,x,\mu,q) = \int_Aq(da)\sigma\sigma^\top(t,x,\mu,a),
\]
and $\bar{\sigma}(t,x,\mu,\delta_a) = \sigma(t,x,\mu,a)$ for each $(t,x,\mu,a)$; moreover, we may find a filtered probability space $(\Omega^1,\F^1_t,Q_1)$ supporting a $\bar{m}$-dimensional $\F^1_t$-Wiener process $W$, a $\R^d$-valued $\F^1_t$-adapted process $X^1$, and a $\P(A)$-valued process $\Lambda_t$ such that
\begin{align}
dX^1_t &= \int_Ab(t,X^1_t,\mu_t,a)\Lambda_t(da)dt + \bar{\sigma}(t,X^1_t,\mu_t,\Lambda_t)dW_t, \text{ and } \nonumber \\
P &= Q_1 \circ (dt\Lambda_t(da),X^1)^{-1}. \label{pf:markov0}
\end{align}
We claim that there exists a (jointly) measurable function $\hat{q} : [0,T] \times \R^d \rightarrow \P(A)$ such that
\begin{align*}
\hat{q}(t,X^1_t) = \E^{Q_1}\left[\left.\Lambda_t \right| X^1_t\right], \ Q_1-a.s., \ \text{a.e. } t \in [0,T].
\end{align*}
More precisely, we mean that for each bounded measurable function $\phi : [0,T] \times \R^d \times A \rightarrow \R$,
\begin{align}
\int_A\phi(t,X_t,a)\,\hat{q}(t,X^1_t)(da) = \E^{Q_1}\left[\left.\int_A\phi(t,X^1_t,a)\,\Lambda_t(da) \right| X^1_t\right], Q_1-a.s., \ \text{a.e. } t \in [0,T]. \label{pf:markovselection001}
\end{align}
To see this, define a probability measure $\eta$ on $[0,T] \times \R^d \times A$ by
\[
\eta(C) := \frac{1}{T}\E^{Q_1}\left[\int_0^T\int_A1_C(t,X^1_t,a)\Lambda_t(da)dt\right].
\]
We may then construct $\hat{q}$ by disintegration by writing $\eta(dt,dx,da) = \eta_{1,2}(dt,dx)[\hat{q}(t,x)](da)$, where $\eta_{1,2}$ denotes the $[0,T] \times \R^d$-marginal of $\eta$ and $\hat{q} : [0,T] \times \R^d \rightarrow \P(A)$ is measurable. Then, for each bounded measurable $h : [0,T] \times \R^d \rightarrow \R$, 
\begin{align*}
\E^{Q_1}\left[\int_0^Th(t,X^1_t)\int_A\phi(t,X_t,a)\hat{q}(t,X^1_t)(da)dt \right] &= T\int_{[0,T] \times \R^d}\!\!\!\!\!\!\!\!\!h(t,x)\int_A\phi(t,x,a)\,\hat{q}(t,x)(da)\eta_{1,2}(dt,dx) \\
	&= T\int_{[0,T] \times \R^d \times A}\!\!\!\!\!\!\!\!\!h(t,x)\phi(t,x,a)\eta(dt,dx,da) \\
	&= \E^{Q_1}\left[\int_0^Th(t,X^1_t)\int_A\phi(t,X^1_t,a)\,\Lambda_t(da)dt \right].
\end{align*}
This is enough to establish \eqref{pf:markovselection001}, thanks to \cite[Lemma 5.2]{brunickshreve-mimicking}. 

With $\hat{q}$ in hand, note that
\begin{align*}
\int_A\hat{q}(t,X^1_t)(da)b(t,X^1_t,\mu_t,a) &= \E^{Q_1}\left[\left.\int_A\Lambda_t(da)b(t,X^1_t,\mu_t,a)\right| X^1_t\right],
\end{align*}
and
\begin{align*}
\bar{\sigma}\bar{\sigma}^\top(t,X^1_t,\mu_t,\hat{q}(t,X^1_t)) &= \int_A\hat{q}(t,X^1_t)(da)\sigma\sigma^\top(t,X^1_t,\mu_t,a) \\
	&= \E^{Q_1}\left[\left.\int_A\Lambda_t(da)\sigma\sigma^\top(t,X^1_t,\mu_t,a)\right| X^1_t\right].
\end{align*}
The mimicking result of Brunick and Shreve \cite[Corollary 3.7]{brunickshreve-mimicking} tells us that there exists another filtered probability space $(\Omega^2,\F^2_t,Q_2)$ supporting a $\bar{m}$-dimensional $\F^2_t$-Wiener process $W^2$ and a $\R^d$-valued $\F^2_t$-adapted process $X^2$ such that
\begin{align}
dX^2_t &= \int_Ab(t,X^2_t,\mu_t,a)\hat{q}(t,X^2_t)(da)dt + \bar{\sigma}(t,X^2_t,\mu_t,\hat{q}(t,X^2_t))dW^2_t, \text{ and } \label{pf:markov01} \\
Q_2 \circ (X^2_t)^{-1} &= Q_1 \circ (X^1_t)^{-1} = P \circ X_t^{-1}, \text{ for all } t \in [0,T]. \label{pf:markov1}
\end{align}
It follows from It\^o's formula that $P_2 := Q_2 \circ (dt\hat{q}(t,X^2_t),X^2)^{-1}$ is in $\RC(\mu)$. Finally, compute
\begin{align*}
J(\mu,P_2) &= \E^{Q_2}\left[\int_0^T\int_Af(t,X^2_t,\mu_t,a)[\hat{q}(t,X^2_t)](da)dt + g(X^2_T,\mu_T) \right] \\
	&= \E^{Q_1}\left[\int_0^T\int_Af(t,X^1_t,\mu_t,a)[\hat{q}(t,X^1_t)](da)dt + g(X^1_T,\mu_T) \right] \\
	&= \E^{Q_1}\left[\int_0^T\int_Af(t,X^1_t,\mu_t,a)\Lambda_t(da)dt + g(X^1_T,\mu_T) \right] \\
	&= J(\mu,P).
\end{align*}
The second line follows from Fubini's theorem and \eqref{pf:markov1}. The third line follows from Fubini's theorem and the tower property of conditional expectations. This completes the proof of the first part of the theorem; set $P_0 = P_2$, and note that we have in fact proven (2) with \emph{equality}, not \emph{inequality}.

Now suppose assumption \ref{assumption:convex} holds. Then
\[
\left(b,\sigma\sigma^\top,f\right)(t,x,\mu_t,\hat{q}(t,x)) = \int_A\hat{q}(t,x)\left(b,\sigma\sigma^\top,f\right)(t,x,\mu_t,a) \in K(t,x,\mu_t),
\]
for each $(t,x) \in [0,T] \times \R^d$. As in \cite[Proposition 3.5]{haussmannlepeltier-existence}, $K(t,x,\mu_t)$ is a closed set for each $(t,x)$. By the measurable selection result of \cite[Theorem A.9]{haussmannlepeltier-existence} (or rather the slight extension of \cite[Lemma 3.1]{dufourstockbridge-existence}), there exist measurable functions $\hat{\alpha} : [0,T] \times \R^d \rightarrow A$ and $\hat{z} : [0,T] \times \R^d \rightarrow [0,\infty)$ such that
\begin{align}
\int_A\hat{q}(t,x)(da)\left(b,\sigma\sigma^\top,f\right)(t,x,\mu_t,a) = \left(b,\sigma\sigma^\top,f\right)(t,x,\mu_t,\hat{\alpha}(t,x)) - \left(0,0,\hat{z}(t,x)\right), \label{pf:markov2}
\end{align}
for all $(t,x) \in [0,T] \times \R^d$. In particular,
\begin{align}
b(t,x,\mu_t,\hat{\alpha}(t,x)) &= \int_A\hat{q}(t,x)(da)b(t,x,\mu_t,a), \text{ and } \nonumber \\
\sigma\sigma^\top(t,x,\mu_t,\hat{\alpha}(t,x)) &= \int_A\hat{q}(t,x)(da)\sigma\sigma^\top(t,x,\mu_t,a) \nonumber \\
	&= \bar{\sigma}\bar{\sigma}^\top(t,x,\mu_t,\hat{q}(t,x)) \label{pf:markov3}
\end{align}
Now define
\[
P_0 := Q_2 \circ (dt\delta_{\hat{\alpha}(t,X^2_t)}(da),X^2)^{-1}.
\]
Using the equality \eqref{pf:markov3} and It\^{o}'s formula, we conclude that $P_0$ is in $\RC(\mu)$. Intuitively, we are exploiting here the fact that the law of the solution of an SDE does not depend on the choice of square root of the volatility matrix.
Finally, 
\begin{align*}
J(\mu,P_0) &= \E^{Q_2}\left[\int_0^Tf(t,X^2_t,\mu_t,\hat{\alpha}(t,X^2_t))dt + g(X^2_T,\mu_T) \right] \\
	&\ge \E^{Q_2}\left[\int_0^T\int_Af(t,X^2_t,\mu_t,a)\hat{q}(t,X^2_t)(da)dt + g(X^2_T,\mu_T) \right] \\
	&= \E^{Q_1}\left[\int_0^T\int_Af(t,X^1_t,\mu_t,a)\hat{q}(t,X^1_t)(da)dt + g(X^1_T,\mu_T) \right] \\
	&= \E^{Q_1}\left[\int_0^T\int_Af(t,X^1_t,\mu_t,a)\Lambda_t(da)dt + g(X^1_T,\mu_T) \right] \\
	&= J(\mu,P).
\end{align*}
The second line follows from \eqref{pf:markov2}. The third line comes from Fubini's theorem and $Q_2 \circ (X^2_t)^{-1} = Q_1 \circ (X^1_t)^{-1}$, $t \in [0,T]$. The fourth line follows from Fubini's theorem and the tower property of conditional expectations. The last step is just \eqref{pf:markov0}.
\end{proof}

\begin{remark} \label{re:markovselection}
It should be noted that the control produced by Theorem \ref{th:markovselection} is called \emph{Markovian} because of its form $\hat{\alpha}(t,X_t)$, but it does not necessarily render the state process $X$ a Markov process. Although the dynamics appear to be Markovian, the process $X$ is a solution of a potentially ill-posed martingale problem, and it is well-known (see \cite[Chapter 12]{stroockvaradhanbook}) that uniqueness in law is required to guarantee the solution is Markovian. If the volatility $\sigma$ is uncontrolled and uniformly nondegenerate, then the martingale problem is indeed well-posed, and $X$ is a strong Markov process.
\end{remark}

\begin{remark} \label{re:markovselection2}
It is clear from the proof that the full force of assumption \ref{assumption:A} is not needed for Theorem \ref{th:markovselection}. Notably, the assumption $p' > p$ is not needed. The coefficients $(b,\sigma,f)$ should be continuous in $a$ to ensure that the set $K(t,x,\mu)$ is closed, but continuity in $(x,\mu)$ is unnecessary.
\end{remark}

\section{Bounded coefficients} \label{se:boundedcoefficients}
In this section, Theorem \ref{th:relaxedexistence} is proven in the case that the coefficients are bounded and the control space compact. The general case is proven in Section \ref{se:unboundedcoefficients} by a limiting argument. Consider the following assumption:

\begin{assumption}{\textbf{(B)}} \label{assumption:B}
The functions $b$, $\sigma$ are bounded, and the control space $A$ is compact.
\end{assumption}

\begin{theorem} \label{th:boundedexistence}
Under assumptions \ref{assumption:A} and \ref{assumption:B}, there exists a relaxed MFG solution.
\end{theorem}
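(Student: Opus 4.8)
The plan is to obtain a relaxed MFG solution as a fixed point of the set-valued best-response map
\[
\Psi(\mu) := \left\{ P \circ X^{-1} : P \in \RC^*(\mu) \right\} \subseteq \P^p(\C^d),
\]
via the Kakutani--Fan--Glicksberg fixed point theorem. For this I must (i) confine $\Psi$ to a compact convex domain $\Q \subseteq \P^p(\C^d)$ with $\Psi(\mu) \subseteq \Q$, (ii) check that $\Psi$ has nonempty convex values, and (iii) check that $\Psi$ has closed graph. The compactifying effect of assumption \ref{assumption:B} --- bounded $b,\sigma$ and compact $A$ --- is what makes all three feasible. First I would set up the domain: with $b$ and $\sigma$ bounded, the state estimate (Lemma \ref{le:stateestimate}) together with the Burkholder--Davis--Gundy inequality yields, for every $\mu \in \P^p(\C^d)$ and every $P \in \RC(\mu)$, uniform bounds $\E^P\|X\|_T^{p'} \le C$ and $\E^P|X_t - X_s|^{p'} \le C|t-s|^{p'/2}$ with $C$ depending only on $c_1$, $T$, and $|\lambda|^{p'}$, crucially \emph{not} on $\mu$. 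I then take $\Q$ to be the set of $\nu \in \P(\C^d)$ with $\nu_0 = \lambda$ satisfying these two bounds. By Kolmogorov's tightness criterion $\Q$ is relatively compact in $\P(\C^d)$, and since $p' > p$ (assumption (A.5)) the uniform $p'$-moment bound upgrades weak convergence to $d_{\C^d,p}$-convergence, so $\Q$ is compact in $\P^p(\C^d)$; it is plainly convex and closed, and by construction $\Psi(\mu) \subseteq \Q$ for all $\mu$.

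Next, nonemptiness and convexity of the values. The set $\RC(\mu)$ is nonempty (solve the SDE \eqref{def:SDE} with any fixed control, using Proposition \ref{pr:sderepresentation}), and it is compact in $\P^p(\Omega)$: the $\V$-marginals are tight because $\V[A]$ is compact for compact $A$, the $\C^d$-marginals are tight by the estimates above, and the martingale conditions defining $\RC(\mu)$ pass to weak limits using boundedness and continuity of the coefficients. Because $P \mapsto J(\mu,P) = \int_\Omega \Gamma^\mu \, dP$ is affine and, under assumption \ref{assumption:B}, continuous on the compact set $\RC(\mu)$, the maximizer set $\RC^*(\mu) = \arg\max_{P \in \RC(\mu)} J(\mu,P)$ is a nonempty compact \emph{face} of the convex set $\RC(\mu)$, hence convex. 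The crucial structural point emphasized in the introduction is that relaxation renders both the constraint set $\RC(\mu)$ and the objective $J(\mu,\cdot)$ \emph{linear} in $P$, so no convexity of best responses need be imposed; convexity of $\Psi(\mu)$ then follows at once since $P \mapsto P\circ X^{-1}$ is affine.

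Finally, and this is where the real work lies, I would prove that $\Psi$ has closed graph, which reduces to the joint continuity of the correspondence $\mu \mapsto \RC(\mu)$. For upper hemicontinuity: if $\mu^n \to \mu$ in $\P^p(\C^d)$, $P^n \in \RC(\mu^n)$, and $P^n \to P$ in $\P^p(\Omega)$, then $P \in \RC(\mu)$; this amounts to passing to the limit in the martingale $M^{\mu^n,\phi}$, for which the delicate point is the convergence of $\int_{[0,t]\times A} q(ds,da)\, L\phi(s,x_s,\mu^n_s,a)$, handled using the continuity of $b,\sigma$ in $(x,\mu,a)$ (A.1), the compact support of $\phi$, and the continuity of $\P^p(\C^d)\ni\nu\mapsto\nu_s$. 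For lower hemicontinuity: given $P' \in \RC(\mu)$ and $\mu^n \to \mu$, I would use the SDE representation of Proposition \ref{pr:sderepresentation} to re-solve the state equation with the \emph{same} driving martingale measures and control but with coefficients frozen at $\mu^n_s$, and invoke stability of strong SDE solutions to produce $P'_n \in \RC(\mu^n)$ with $P'_n \to P'$. Combining these with the joint continuity of $(\mu, P) \mapsto J(\mu,P)$ gives, for $P^n \in \RC^*(\mu^n)$ with $P^n \to P$ and any competitor $P' \in \RC(\mu)$, the chain $J(\mu,P) \ge \lim J(\mu^n,P^n) \ge \lim J(\mu^n,P'_n) = J(\mu,P')$, so $P \in \RC^*(\mu)$ and $\Psi$ has closed graph.

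The Kakutani--Fan--Glicksberg theorem then furnishes $\mu \in \Psi(\mu)$, i.e. a $P \in \RC^*(\mu)$ with $P\circ X^{-1}=\mu$, which is precisely a relaxed MFG solution. I expect the stability/closed-graph step to be the main obstacle, chiefly the uniform-integrability bookkeeping required to pass limits through the integrands $L\phi$ and $\Gamma^\mu$ (unbounded in $x$) while simultaneously varying the environment $\mu^n$; the exponent gap $p' > p$ is exactly what makes this bookkeeping work.
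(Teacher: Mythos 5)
Your overall strategy is exactly the paper's: the same best-response map, the same three ingredients (compact convex domain, nonempty convex values, closed graph via continuity of $\RC$ and of $J$), and the same Kakutani--Fan--Glicksberg theorem; your hemicontinuity arguments for $\RC$ and your by-hand Berge-type chain of inequalities for optimality of limit points match Lemmas \ref{le:rccontinuous} and \ref{le:Jcontinuous} and the paper's proof of Theorem \ref{th:boundedexistence} almost step for step. The one place you diverge is also the one genuine flaw: the compactness of your domain $\Q$. You define $\Q$ by the bounds $\E^P\|X\|_T^{p'}\le C$ and $\E^P|X_t-X_s|^{p'}\le C|t-s|^{p'/2}$ and invoke Kolmogorov's tightness criterion, but that criterion requires an exponent \emph{strictly greater than} $1$ on $|t-s|$, i.e.\ $p'/2>1$. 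Assumption (A.5) only guarantees $p'>p\ge 1$, and the paper's ``typical case'' is $p'=2$, where your exponent is exactly $1$ and the criterion fails (as it does for every $p'\le 2$). The gap is easily repaired precisely because you are in the bounded regime: increments $X_t-X_s$ do not involve $X_0$, so boundedness of $b$ and $\sigma$ together with Burkholder--Davis--Gundy gives $\E^P|X_t-X_s|^q\le C_q|t-s|^{q/2}$ for \emph{every} $q\ge 2$, uniformly over the range of $\RC$; defining $\Q$ with some fixed $q>2$ for the increment bound (keeping the $p'$-moment bound on $\|X\|_T$, which is what upgrades tightness to relative compactness in $\P^p(\C^d)$ via $p'>p$) makes your argument work. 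The paper avoids moment bounds on increments altogether: since $|L\phi|\le C_\phi$ when $b,\sigma$ are bounded, every law in the range makes $\phi(X_t)+C_\phi t$ a submartingale for nonnegative $\phi\in C^\infty_0(\R^d)$, and the Stroock--Varadhan criterion \cite[Theorem 1.4.6]{stroockvaradhanbook} yields tightness of the set so defined.

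A secondary point you gloss over: Kakutani--Fan--Glicksberg requires a compact convex subset of a locally convex Hausdorff topological \emph{vector} space, and $\P^p(\C^d)$ is not a vector space. One must view $\Q$ as a subset of the space $\M(\C^d)$ of bounded signed measures equipped with the topology induced by bounded continuous functions, and check, as the paper does, that on the Wasserstein-compact set $\Q$ this topology coincides with the $d_{\C^d,p}$-topology, so that compactness, convexity, and the closed graph of the best-response map all transfer. This is routine given what you have already established, but it cannot simply be omitted.
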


\begin{remark}
In fact, under assumptions \ref{assumption:A} and \ref{assumption:B}, we may take $p'=p=0$ in assumption \ref{assumption:A}, and Theorem \ref{th:boundedexistence} is true with an even simpler proof.
\end{remark}

The proof of Theorem \ref{th:boundedexistence} is broken up into several lemmas. First, we state a version of a standard estimate which will be useful in later sections as well. Recall here the notational convention of \eqref{def:measureconvention}.

\begin{lemma} \label{le:stateestimate}
Assume \ref{assumption:A} holds, and fix $\gamma \in [p,p']$. Then there exists a constant $c_4 > 0$, depending only on $\gamma$, $|\lambda|^{p'}$, $T$, and the constant $c_1$ of (A.2) such that for any $\mu \in \P^p(\C^d)$ and $P \in \RC[b,\sigma,A](\mu)$ we have
\begin{align*}
\E^P\|X\|_T^\gamma &\le c_4\left(1 + \|\mu\|_T^\gamma + \E^P\int_0^T|\Lambda_t|^\gamma dt\right).
\end{align*}
In particular, $P \in \P^p(\Omega)$. Moreover, if $P \circ X^{-1} = \mu$, then we have
\begin{align*}
\|\mu\|_T^\gamma = \E^P\|X\|_T^\gamma &\le c_4\left(1 + \E^P\int_0^T|\Lambda_t|^\gamma dt\right).
\end{align*}
\end{lemma}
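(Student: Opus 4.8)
The plan is to realize each $P \in \RC(\mu)$ concretely through the stochastic differential equation representation of Proposition \ref{pr:sderepresentation}, and then run a standard Burkholder--Davis--Gundy / Gronwall moment estimate, taking care with the martingale-measure stochastic integral and with a priori finiteness. First I would assume without loss of generality that the right-hand side is finite, i.e. $\|\mu\|_T^\gamma < \infty$ and $\E^P\int_0^T|\Lambda_t|^\gamma\,dt < \infty$ (otherwise the inequality is trivial). By Proposition \ref{pr:sderepresentation} I may work on a filtered probability space carrying the orthogonal martingale measures $N$ with intensity $\Lambda_t(da)dt$ and the process $X$ solving \eqref{def:SDE}, with $P$ the law of $(\Lambda,X)$. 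Writing $X_t = X_0 + B_t + M_t$, with $B_t = \int_0^t\int_A b\,\Lambda_s(da)ds$ the drift and $M_t = \int_0^t\int_A \sigma\, N(da,ds)$ the continuous, vector-valued martingale-measure integral, I would take $\sup_{s \le t}$, raise to the power $\gamma \ge 1$, and use $(x+y+z)^\gamma \le 3^{\gamma-1}(x^\gamma + y^\gamma + z^\gamma)$.

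Then I treat each piece. For the initial term, $\E^P|X_0|^\gamma = |\lambda|^\gamma \le 1 + |\lambda|^{p'}$ since $\gamma \le p'$ by (A.5). For the drift I would apply Jensen in $da$ (as $\Lambda_s$ is a probability measure) and in $ds$, together with the linear growth bound on $b$ in (A.2), the bound $(\int_A|a|\Lambda_s(da))^\gamma \le |\Lambda_s|^\gamma$, and the estimate $\int_{\R^d}|z|^p\mu_s(dz) = |\mu_s|^p \le \|\mu\|_s^p$ followed by Jensen to get $(|\mu_s|^p)^{\gamma/p} \le \|\mu\|_s^\gamma$; this bounds the drift contribution by $C\,\E^P\int_0^t(1 + \|X\|_r^\gamma + \|\mu\|_r^\gamma + |\Lambda_r|^\gamma)\,dr$. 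For the martingale term I would invoke Burkholder--Davis--Gundy, noting that the quadratic variation of $M$ is $\int_0^t\int_A \mathrm{Tr}\,\sigma\sigma^\top\,\Lambda_s(da)ds$, so that $\E^P\sup_{s\le t}|M_s|^\gamma \le C\,\E^P\big(\int_0^t\int_A|\sigma\sigma^\top|\Lambda_s(da)ds\big)^{\gamma/2}$. Here I would use the growth bound on $\sigma\sigma^\top$ in (A.2); the crucial inputs are $p_\sigma \le 2$ and $p_\sigma \le p \le \gamma$ from (A.5), which let every $p_\sigma$-power be absorbed into a $\gamma$-power via $|y|^{p_\sigma} \le 1 + |y|^\gamma$ and via Jensen $(\int_A|a|^{p_\sigma}\Lambda_s(da))^{\gamma/2} \le 1 + |\Lambda_s|^\gamma$. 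To handle the outer exponent $\gamma/2$ I would split into the cases $\gamma \ge 2$ (Hölder in time) and $p \le \gamma < 2$ (use $y^{\gamma/2} \le 1 + y$); either way the martingale term is dominated by $C\big(1 + \E^P\int_0^t(\|X\|_r^\gamma + \|\mu\|_r^\gamma + |\Lambda_r|^\gamma)\,dr\big)$.

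Collecting terms gives, with $\psi(t) := \E^P\|X\|_t^\gamma$ and after bounding $\int_0^t\|\mu\|_r^\gamma\,dr \le T\|\mu\|_T^\gamma$,
\[
\psi(t) \le C_1\Big(1 + \|\mu\|_T^\gamma + \E^P\int_0^T|\Lambda_r|^\gamma\,dr + \int_0^t \psi(r)\,dr\Big).
\]
The main obstacle is that this inequality is only useful once we know $\psi(T) < \infty$, which is not clear a priori. I would resolve this in the standard way by localizing along $\tau_n := \inf\{t : |X_t| \ge n\}$: the same computation bounds the stopped quantity $\E^P\|X_{\cdot \wedge \tau_n}\|_t^\gamma$ by the finite right-hand side, Gronwall then gives a bound uniform in $n$, and Fatou's lemma as $n \to \infty$ yields both finiteness of $\psi(T)$ and the first claimed inequality with $c_4 = C_1 e^{C_1 T}$, depending only on $\gamma$, $|\lambda|^{p'}$, $T$, and $c_1$. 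Taking $\gamma = p$ and using $\mu \in \P^p(\C^d)$ together with condition (2) in the definition of $\RC(\mu)$ gives $\E^P\|X\|_T^p < \infty$, and since the same condition controls the $\V$-component, we conclude $P \in \P^p(\Omega)$. Finally, when $\mu = P \circ X^{-1}$ I would not bound $\|\mu\|_r^\gamma$ by $\|\mu\|_T^\gamma$ but instead use $\|\mu\|_r^\gamma = \E^P\|X\|_r^\gamma = \psi(r)$ directly, merging this term with the existing $\int_0^t \psi(r)\,dr$ before applying Gronwall; this removes $\|\mu\|_T^\gamma$ from the right-hand side and yields the final estimate.
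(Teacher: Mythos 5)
Your proof is correct and follows essentially the same route as the paper's: the SDE representation from Proposition \ref{pr:sderepresentation}, a Burkholder--Davis--Gundy/Jensen estimate using the growth bounds of (A.2) with the same case split on $\gamma \ge 2$ versus $\gamma < 2$ (exploiting $p_\sigma \le 2$ and $\gamma \ge p_\sigma$), Gronwall's inequality, and the same merging of $\|\mu\|_s^\gamma = \E^P\|X\|_s^\gamma$ into the Gronwall term to obtain the second claim. Your localization along $\tau_n$ to secure a priori finiteness before applying Gronwall is a sound technical refinement that the paper leaves implicit, but it does not change the substance of the argument.
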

\begin{proof}
There is a constant $C > 0$ (which will change from line to line) such that
\begin{align*}
|X_t|^\gamma \le &C|X_0|^\gamma + C\int_0^tds\int_A\Lambda_s(da)|b(s,X_s,\mu_s,a)|^\gamma \\
	&+ C\left|\int_0^t\int_A\sigma(s,X_s,\mu_s,a)N(da,ds)\right|^\gamma.
\end{align*}
The Burkholder-Davis-Gundy inequality yields
\begin{align*}
\E^P\|X\|_t^\gamma \le &C\E^P\left[|X_0|^\gamma + \int_0^tds\int_A\Lambda_s(da)\sup_{0 \le u \le s}|b(u,X_u,\mu_u,a)|^\gamma \right. \\
	&\quad\quad+ \left.\left(\int_0^tds\int_A\Lambda_s(da)\sup_{0 \le u \le s}|\sigma(s,X_s,\mu_s,a)|^2\right)^{\gamma/2}\right] \\
	\le &C\E^P\left[|X_0|^\gamma + \int_0^tds\int_A\Lambda_s(da)c_1^\gamma(1 + \|X\|_s^\gamma + \|\mu\|_s^\gamma + |a|^\gamma)  \right. \\
	&\quad\quad+ \left.\left(\int_0^tds\int_A\Lambda_s(da)c_1\left(1 + \|X\|_s^{p_\sigma} + \left(\int_{\C^d}\|z\|_s^p\mu(dz)\right)^{p_\sigma/p} + |a|^{p_\sigma}\right)\right)^{\gamma/2}\right] \\
	&\le C\E^P\left[1 + |X_0|^\gamma + \int_0^tds\int_A\Lambda_s(da)(1 + \|X\|_s^\gamma + \|\mu\|_s^\gamma + |a|^\gamma)\right]
\end{align*}
We used Jensen's inequality for the second line to get $\left(\int_{\C^d}\|z\|_s^p\mu(dz)\right)^{\gamma/p} \le \|\mu\|_s^\gamma$. If $\gamma \ge 2$, the last line follows from Jensen's inequality and the inequality $|x|^{p_\sigma \gamma/2} \le 1 + |x|^\gamma$, which holds since $p_\sigma \le 2$. If $\gamma / 2 \le 1$, the last line follows from the inequality $|x|^{\gamma / 2} \le 1 + |x|$ followed by $|x|^{p_\sigma} \le 1 + |x|^\gamma$, which holds since $\gamma \ge p_\sigma$. The first claim follows now from Gronwall's inequality. If $P \circ X^{-1} = \mu$, then the above becomes
\begin{align*}
\|\mu\|_t^\gamma = \E^P\|X\|_t^\gamma \le C\E^P\left[|X|_0^\gamma + \int_0^t\left(1 + 2\|\mu\|_s^\gamma + |\Lambda_t|^\gamma\right)ds\right].
\end{align*}
The second claim now also follows from Gronwall's inequality.
\end{proof}

The proof of Theorem \ref{th:boundedexistence} is an application of the Kakutani-Fan-Glicksberg fixed point theorem. For background on set-valued analysis the reader is referred to \cite[Chapter 17]{aliprantisborder}. For this paragraph, fix two metric spaces $E$ and $F$. A set valued function $h : E \rightarrow 2^F$ is \emph{lower hemicontinuous} if, whenever $x_n \rightarrow x$ in $E$ and $y \in h(x)$, there exists $y_{n_k} \in h(x_{n_k})$ such that $y_{n_k} \rightarrow y$. If $h(x)$ is closed for each $x \in E$ then $h$ is called \emph{upper hemicontinuous} if, whenever $x_n \rightarrow x$ in $E$ and $y_n \in h(x_n)$ for each $n$, the sequence $(y_n)$ has a limit point in $h(x)$. We say $h$ is \emph{continuous} if it is both upper hemicontinuous and lower hemicontinuous. If $F$ is compact, then the graph $\{(x,y) : x \in E, \ y \in h(x)\}$ of $h$ is closed if and only if $h(x)$ is closed for each $x \in E$ and $h$ is upper hemicontinuous.

\begin{lemma} \label{le:rccontinuous}
Under assumptions \ref{assumption:A} and \ref{assumption:B}, the range $\RC(\P^p(\C^d))) := \{P \in \RC(\mu) : \mu \in \P^p(\C^d)\}$ is relatively compact in $\P^p(\Omega)$, and the set-valued function $\RC$ is continuous.
\end{lemma}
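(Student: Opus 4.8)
The plan is to verify the two assertions separately: relative compactness of the range, and continuity (meaning both upper and lower hemicontinuity) of $\RC$.

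First, relative compactness of $\RC(\P^p(\C^d))$ in $\P^p(\Omega)$. Since $A$ is compact, $\V=\V[A]$ is a compact metric space, so the $\V$-marginals of the whole family are automatically relatively compact in $\P^p(\V)$, with uniformly bounded $p$-th moments because $\int_A|a|^p\Lambda_t(da)\le\sup_{a\in A}|a|^p<\infty$. For the $\C^d$-marginals I would use the SDE representation of Proposition \ref{pr:sderepresentation}: with $b,\sigma$ bounded, the Burkholder--Davis--Gundy inequality yields a modulus-of-continuity estimate such as $\E^P|X_t-X_s|^4\le C|t-s|^2$ uniformly over $P\in\RC(\mu)$ and over $\mu$, giving tightness in $\C^d$ by the Kolmogorov--Chentsov criterion. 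Applying Lemma \ref{le:stateestimate} with $\gamma=p'$ (legitimate since $\lambda\in\P^{p'}$ and the growth constants are bounded when $b,\sigma$ are bounded) produces a uniform bound $\sup_P\E^P\|X\|_T^{p'}\le C$, and since $p'>p$ this furnishes uniform integrability of $\|X\|_T^p$. As tightness together with uniform $p$-integrability of the metric is equivalent to relative compactness in $\P^p$ (the characterization recorded in Appendix \ref{ap:wasserstein}), combining the two coordinates gives relative compactness of the range in $\P^p(\Omega)$.

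Second, upper hemicontinuity (which also yields closedness of each value $\RC(\mu)$). Given $\mu_n\to\mu$ in $\P^p(\C^d)$ and $P_n\in\RC(\mu_n)$, the relative compactness just established lets me pass to a subsequence with $P_n\to P$ in $\P^p(\Omega)$, and it suffices to show $P\in\RC(\mu)$. The condition $P\circ X_0^{-1}=\lambda$ passes to the limit by weak convergence, and the moment condition on $\Lambda$ is trivial since $A$ is bounded. The crux is the martingale property: I must show $\E^P[(M^{\mu,\phi}_t-M^{\mu,\phi}_s)h]=0$ for each $\phi\in C^\infty_0(\R^d)$, each $0\le s<t$, and each bounded continuous $\F_s$-measurable $h$. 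Because $M^{\mu,\phi}$ is built from the \emph{fixed} limit $\mu$, I first note that $(q,x)\mapsto(M^{\mu,\phi}_t-M^{\mu,\phi}_s)(q,x)\,h(q,x)$ is bounded (bounded $b,\sigma$ and compact support of $\phi$) and continuous on $\Omega$ (the map $(q,x)\mapsto\int_{[0,t]\times A}q(dr,da)F(r,x_r,a)$ is continuous for bounded continuous $F$), so weak convergence gives $\E^{P_n}[(M^{\mu,\phi}_t-M^{\mu,\phi}_s)h]\to\E^P[(M^{\mu,\phi}_t-M^{\mu,\phi}_s)h]$. On the other hand $\E^{P_n}[(M^{\mu_n,\phi}_t-M^{\mu_n,\phi}_s)h]=0$, so it remains to control the discrepancy between the $\mu$- and $\mu_n$-generators. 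Setting $\Delta_n(r):=\sup_{x\in K,\,a\in A}|L\phi(r,x,\mu_{n,r},a)-L\phi(r,x,\mu_r,a)|$ for a compact $K\supset\mathrm{supp}\,\phi$, the key observation is that $\mu_n\to\mu$ in $\P^p(\C^d)$ forces $\mu_{n,r}\to\mu_r$ in $\P^p(\R^d)$ for every $r$ (the evaluation $x\mapsto x_r$ is $1$-Lipschitz, so $d_{\R^d,p}(\mu_{n,r},\mu_r)\le d_{\C^d,p}(\mu_n,\mu)$); joint continuity of $b,\sigma$ in $(x,\mu,a)$ with compactness of $K\times A$ then gives $\Delta_n(r)\to 0$ pointwise, with $\Delta_n$ uniformly bounded, whence $\int_0^T\Delta_n(r)\,dr\to 0$ by dominated convergence and $\E^{P_n}|(M^{\mu_n,\phi}_t-M^{\mu_n,\phi}_s)-(M^{\mu,\phi}_t-M^{\mu,\phi}_s)|\le\int_0^T\Delta_n(r)\,dr\to 0$. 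Combining the three facts gives the martingale identity under $P$, so $P\in\RC(\mu)$; taking $\mu_n\equiv\mu$ shows $\RC(\mu)$ is closed.

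Third, lower hemicontinuity. Given $\mu_n\to\mu$ and $P\in\RC(\mu)$, I would realize $P$ through the state equation of Proposition \ref{pr:sderepresentation} on a filtered space carrying the control $\Lambda$ and orthogonal martingale measures $N$, then freeze this control and noise while replacing $\mu$ by $\mu_n$: let $X^n$ be the unique strong solution (guaranteed by the Lipschitz and growth assumptions noted after Proposition \ref{pr:sderepresentation}) of the state equation driven by the same $\Lambda,N$ but with coefficients evaluated at $\mu_n$, and set $P_n:=$ the law of $(dt\Lambda_t(da),X^n)$, so that $P_n\in\RC(\mu_n)$ by construction. A standard Gronwall--BDG stability estimate, in which the Lipschitz-in-$x$ terms are absorbed and the genuinely perturbed terms take the form $\int_A|b(r,X_r,\mu_{n,r},a)-b(r,X_r,\mu_r,a)|\Lambda_r(da)$ (and similarly for $\sigma\sigma^\top$), shows $\E\|X^n-X\|_T^{p\vee 2}\to 0$, since these error terms are bounded and vanish pointwise by continuity in $\mu$. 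As the $\V$-coordinate is identical for $P_n$ and $P$, coupling through the common probability space gives $d_{\Omega,p}(P_n,P)^p\le\E\|X^n-X\|_T^p\to 0$, the required approximation.

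The main obstacle is the martingale passage in the upper-hemicontinuity step, precisely because the natural functional $M^{\mu_n,\phi}$ drifts with $\mu_n$; the resolution is to test against the fixed limiting functional $M^{\mu,\phi}$ and separately kill the generator discrepancy, which is where compactness of $A$, the compact support of $\phi$, boundedness of the coefficients, and the marginal convergence $\mu_{n,r}\to\mu_r$ all enter. Lower hemicontinuity is comparatively routine once the strong-solution theory and the \emph{freeze-the-control, perturb-the-mean-field} device are in place.
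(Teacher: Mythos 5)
Your proposal follows the same overall architecture as the paper's proof: relative compactness marginal by marginal (compactness of $\V[A]$ for the control coordinate, a tightness-plus-moments argument for the state coordinate, then Lemma \ref{le:productrelcompactness}), upper hemicontinuity via a closed-graph argument on the martingale functionals, and lower hemicontinuity by representing $P$ through Proposition \ref{pr:sderepresentation}, freezing the control and noise, and running a Gronwall--BDG stability estimate as $\mu_n\to\mu$. Your two implementation variations --- Kolmogorov--Chentsov in place of the Aldous-criterion Proposition \ref{pr:itocompact}, and splitting the martingale passage into a fixed-$\mu$ convergence step plus a generator-discrepancy estimate instead of the paper's single appeal to joint continuity of $(\mu,q,x)\mapsto M^{\mu,\phi}_t(q,x)$ --- are both legitimate in the bounded/compact setting.

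However, two of your justifications need repair. First, in the upper-hemicontinuity step you justify convergence of $\E^{P_n}[(M^{\mu,\phi}_t-M^{\mu,\phi}_s)h]$ by asserting that $(q,x)\mapsto\int_{[0,t]\times A}q(dr,da)F(r,x_r,a)$ is continuous ``for bounded continuous $F$''. But the integrand you actually use, $F(r,x,a)=L\phi(r,x,\mu_r,a)$, is only \emph{measurable} in $r$: assumption (A.1) gives continuity of $b$ and $\sigma$ in $(x,\mu,a)$ but only measurability in $t$, so $F$ is not jointly continuous and elementary weak convergence does not apply. The continuity you need is nevertheless true, but it is exactly the content of Corollary \ref{co:V}(2) (equivalently Lemma \ref{le:componentwise}, the Jacod--M\'emin stable-convergence result), which exploits the fact that the time-marginal of every $q\in\V$ is fixed equal to Lebesgue measure; this is the tool the paper invokes, and your argument must invoke it too. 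Second, for the uniform moment bound you cite Lemma \ref{le:stateestimate} with $\gamma=p'$, but the right-hand side of that estimate contains $\|\mu\|_T^{\gamma}$, which for an arbitrary $\mu\in\P^p(\C^d)$ is not uniformly bounded over the range --- indeed it may be infinite, since $\mu$ is only assumed to have finite $p$-th moments. The lemma as stated therefore yields nothing uniform. The fix is the one your parenthetical hints at but does not carry out: with $b,\sigma$ bounded, bypass the lemma and estimate directly,
\begin{align*}
\|X\|_T \le |X_0| + T\sup|b| + \sup_{t \le T}\left|\int_0^t\int_A\sigma(s,X_s,\mu_s,a)N(da,ds)\right|,
\end{align*}
so that BDG gives $\sup_{P}\E^P\|X\|_T^{p'}\le C(1+|\lambda|^{p'})<\infty$ with no dependence on $\mu$ whatsoever. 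With these two patches your proof is complete and matches the paper's in substance.
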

\begin{proof}
When $A$ is compact, so is $\V=\V[A]$, and the topology of $\P^p(\V)$ is that of weak convergence. Thus $\{P \circ \Lambda^{-1} : P \in \RC(\P^p(\C^d))\}$ is relatively compact in $\P^p(\V)$. From Proposition \ref{pr:itocompact} and boundedness of $b$ and $\sigma$ it follows that $\{P \circ X^{-1} : P \in \RC(\P^p(\C^d))\}$ is relatively compact in $\P^p(\C^d)$. Thus $\RC(\P^p(\C^d))$ is relatively compact in $\P^p(\Omega)$, by Lemma \ref{le:productrelcompactness}.

To show $\RC$ is upper hemicontinuous, it suffices show its graph is closed, since its range is relatively compact. Let $\mu^n \rightarrow \mu$ in $\P^p(\C^d)$ and $P^n \rightarrow P$ in $\P^p(\Omega)$ with $P^n \in \RC(\mu^n)$. Clearly $P \circ X_0^{-1} = \lim_nP^n \circ X_0^{-1} = \lambda$. Now fix $s < t$, $\phi \in C^\infty_0(\R^d)$, and a bounded, continuous, and $\F_s$-measurable function $h : \Omega \rightarrow \R$. Note that $(\mu,q,x) \mapsto M^{\mu,\phi}_t(q,x)$ is bounded and continuous (apply Corollary \ref{co:V}(2) with $p=0$). Since $M^{\mu^n,\phi}_t$ is a $P^n$-martingale for each $n$,
\[
\E^P\left[(M^{\mu,\phi}_t-M^{\mu,\phi}_s)h\right] = \lim_{n\rightarrow\infty}\E^{P^n}\left[(M^{\mu^n,\phi}_t-M^{\mu^n,\phi}_s)h\right] = 0.
\]
Hence $M^{\mu,\phi}_t$ is a $P$-martingale, and so $P \in \RC(\mu)$.

To show $\RC$ is lower hemicontiuous, let $\mu^n \rightarrow \mu$ and $P \in \RC(\mu)$. By Proposition \ref{pr:sderepresentation}, there exists a filtered probability space $(\Omega',\F'_t,P')$ supporting a $d$-dimensional $\F'_t$-adapted process $X$ as well as $m$ orthogonal $\F'_t$-martingale measures $N = (N^1,\ldots,N^m)$ on $A \times [0,T]$ with intensity $\Lambda_t(da)dt$, such that $P' \circ (\Lambda,X)^{-1} = P$ and the state equation \eqref{def:SDE} holds on $\Omega'$. The Lipschitz assumption (A.2) ensures that for each $n$ we may strongly solve the SDE
\[
dX^n_t = \int_Ab(t,X^n_t,\mu^n_t,a)\Lambda_t(da)dt + \int_A\sigma(t,X^n_t,\mu^n_t,a)N(da,dt), \ X^n_0 = X_0.
\]
Let $\gamma \ge 2$. A standard estimate using the Lipschitz assumption and the Burkholder-Davis-Gundy inequality yields a constant $C > 0$ independent of $n$ (which may change from line to line) such that
\begin{align*}
\E^{P'}\|X^n - X\|_t^\gamma &\le C\E^{P'}\int_0^t\int_A|b(s,X^n_s,\mu^n_s,a) - b(s,X_s,\mu_s,a)|^\gamma\Lambda_s(da)ds \\
	&\quad + C\E^{P'}\int_0^t\int_A\left|\sigma(s,X^n_s,\mu^n_s,a) - \sigma(s,X_s,\mu_s,a)\right|^\gamma\Lambda_s(da)ds \\
	&\le C\int_0^t\|X^n-X\|^\gamma_sds + C\E^{P'}\int_0^t\int_A|b(s,X_s,\mu^n_s,a) - b(s,X_s,\mu_s,a)|^\gamma\Lambda_s(da)ds \\
	&\quad + C\E^{P'}\int_0^t\int_A|\sigma(s,X_s,\mu^n_s,a) - \sigma(s,X_s,\mu_s,a)|^\gamma\Lambda_s(da)ds.
\end{align*}
Since $b$ and $\sigma$ are bounded and continuous in $\mu$, Gronwall's inequality and the dominated convergence theorem yield $\E^{P'}\|X^n - X\|_T^\gamma \rightarrow 0$. Let $P^n := P' \circ (\Lambda,X^n)^{-1}$, and check using It\^o's formula that $P^n \in \RC(\mu^n)$. Choosing $\gamma \ge p$ implies $P^n \rightarrow P$ in $\P^p(\Omega)$, and the proof is complete.
\end{proof}

\begin{lemma} \label{le:Jcontinuous}
Suppose assumption \ref{assumption:A} holds. Then $J$ is upper semicontinuous. If also \ref{assumption:B} holds, then $J$ is continuous.
\end{lemma}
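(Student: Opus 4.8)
The plan is to prove upper semicontinuity of $J$ directly from its definition, exploiting the fact that convergence $\nu^n \to \nu$ in a $p$-Wasserstein space forces $\int h\,d\nu^n \to \int h\,d\nu$ for every continuous $h$ with $|h| \le C(1 + d^p(\cdot,x_0))$; the full continuity under \ref{assumption:B} will then follow because compactness of $A$ upgrades the one-sided growth bounds of (A.3) to two-sided ones. So fix $\mu^n \to \mu$ in $\P^p(\C^d)$ and $P^n \to P$ in $\P^p(\Omega)$, and split $\Gamma^\mu(q,x) = g(x_T,\mu_T) + \int_{[0,T]\times A}f(t,x_t,\mu_t,a)\,q(dt,da)$ into a terminal and a running part. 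For the terminal part, $(x,\nu)\mapsto g(x_T,\nu_T)$ is continuous with $|g(x_T,\nu_T)| \le c_2(1 + \|x\|_T^p + |\nu_T|^p)$ by (A.1) and (A.3). Writing $\rho^n := P^n\circ X_T^{-1} \to P\circ X_T^{-1}$ in $\P^p(\R^d)$ (the evaluation $x\mapsto x_T$ being $1$-Lipschitz) and $\mu^n_T\to\mu_T$ in $\P^p(\R^d)$, I would split $\int g(x,\mu^n_T)\,\rho^n(dx) - \int g(x,\mu_T)\,\rho(dx)$ into a piece controlled by $\P^p$-convergence of $\rho^n$ against the fixed integrand $g(\cdot,\mu_T)$ and a piece controlled by the uniform integrability of $|x|^p$ under $\rho^n$ together with $g(\cdot,\mu^n_T)\to g(\cdot,\mu_T)$ locally uniformly. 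This gives $\E^{P^n}[g(X_T,\mu^n_T)] \to \E^P[g(X_T,\mu_T)]$, so the terminal part is in fact \emph{continuous}, using only \ref{assumption:A}.

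The running part is where upper semicontinuity genuinely enters, and the device is truncation from below. For $M > 0$ set $f^M := f \vee (-M)$; then $f^M$ is continuous, satisfies $-M \le f^M \le c_2(1 + |x|^p + |\mu|^p)$, and in particular is \emph{bounded in $a$} for each fixed $(t,x,\mu)$. Consequently $(\mu,q,x)\mapsto \int f^M(t,x_t,\mu_t,a)\,q(dt,da)$ is continuous with $p$-growth in $(q,x)$, by the same reasoning (Corollary \ref{co:V}(2)) already invoked for $M^{\mu,\phi}_t$, so arguing as in the terminal case yields $\E^{P^n}\int f^M(t,X_t,\mu^n_t,a)\,\Lambda_t(da)\,dt \to \E^P\int f^M(t,X_t,\mu_t,a)\,\Lambda_t(da)\,dt$. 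Since $f \le f^M$, this gives $\limsup_n \E^{P^n}\int f\,\Lambda_t(da)\,dt \le \E^P\int f^M\,\Lambda_t(da)\,dt$ for every $M$. Finally $f^M \downarrow f$ as $M\uparrow\infty$, with $f^M \le c_2(1+|x|^p+|\mu|^p)$ being $P$-integrable (since $P\in\P^p(\Omega)$ and $\mu\in\P^p(\C^d)$), so monotone convergence gives $\E^P\int f^M\,\Lambda_t(da)\,dt \downarrow \E^P\int f\,\Lambda_t(da)\,dt$, possibly to $-\infty$; letting $M\to\infty$ closes the upper semicontinuity argument, consistent with $J$ taking values in $\R\cup\{-\infty\}$.

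For the continuity claim under \ref{assumption:B}, compactness of $A$ makes $\sup_{a\in A}|a|^{p'} < \infty$, so (A.3) also supplies the \emph{lower} bound $f \ge -c_2(1 + |x|^p + |\mu|^p) - C$; thus $f$ itself is continuous, bounded in $a$, and of $p$-growth, and the truncation step is unnecessary. Applying the running-part convergence directly to $f$ (and the terminal argument to $g$) shows $J(\mu^n,P^n)\to J(\mu,P)$, giving continuity.

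I expect the main obstacle to be the convergence of the running integral when \emph{both} the integrand (through $\mu^n$) and the integrating measure ($P^n$) vary simultaneously: the clean statement ``$\int h\,d\nu^n\to\int h\,d\nu$'' covers a fixed integrand against a moving measure, so one must combine the joint continuity of $(\nu,q,x)\mapsto\psi(\nu,q,x):=\int f^M(t,x_t,\nu_t,a)\,q(dt,da)$ from Corollary \ref{co:V}(2) with the uniform $p$-integrability supplied by $P^n\to P$ in $\P^p(\Omega)$ in order to dispose of the term $\E^{P^n}[\psi(\mu^n,\Lambda,X) - \psi(\mu,\Lambda,X)]$. Verifying this uniform integrability, and checking that the truncated integrand $f^M$ really falls within the scope of Corollary \ref{co:V}(2), is the technical heart of the argument.
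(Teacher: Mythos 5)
Your proposal is correct and follows essentially the same route as the paper: the paper's proof applies Corollary \ref{co:V} together with the growth bounds of (A.3) to conclude that $(\mu,q,x)\mapsto\Gamma^\mu(q,x)$ is upper semicontinuous (continuous when $A$ is compact) and then integrates against $P$, and the truncation $f\vee(-M)$ plus monotone convergence that you carry out explicitly at the level of $J$ is precisely the device hidden inside Corollary \ref{co:usc}, on which the upper-semicontinuous half of Corollary \ref{co:V} rests. The only difference is organizational --- you truncate after integrating, the paper truncates at the level of the integrand, and your split into terminal/running parts and explicit treatment of the simultaneous variation of $(\mu^n,P^n)$ merely unpacks what the paper's one-line ``Hence, $J$ is upper semicontinuous'' leaves implicit.
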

\begin{proof}
It follows from Corollary \ref{co:V} and the upper bounds of $f$ and $g$ of assumption (A.5) that $\P^p(\C^d) \times \V \times \C^d \ni (\mu,q,x) \mapsto \Gamma^\mu(q,x)$ is upper semicontinuous. Hence, $J$ is upper semicontinuous. If $A$ is compact, then $\Gamma$ is continuous by Corollary \ref{co:V}, and so $J$ is continuous.
\end{proof}

\begin{proof}[Proof of Theorem \ref{th:boundedexistence}]
Since $\RC$ is continuous and has nonempty compact values (Lemma \ref{le:rccontinuous}), and since $J$ is continuous (Lemma \ref{le:Jcontinuous}), it follows from a famous result of Berge \cite[Theorem 17.31]{aliprantisborder} that $\RC^*$ is upper hemicontinuous. It is clear that $\RC(\mu)$ is convex for each $\mu$, and it follows from linearity of $P \mapsto J(\mu,P)$ that $\RC^*(\mu)$ is convex for each $\mu$. The map $\P^p(\Omega) \ni P \mapsto P \circ X^{-1} \in \P^p(\C^d)$ is linear and continuous, and it follows that the set-valued map
\[
\P^p(\C^d) \ni \mu \mapsto F(\mu) := \left\{ P \circ X^{-1} : P \in \RC^*(\mu)\right\} \subset \P^p(\C^d)
\]
is upper hemicontinuous and has nonempty compact convex values. To apply a fixed point theorem, we must place the range $F(\P^p(\C^d))$ inside of a convex compact subset of a nice topological vector space. To this end, define
\[
M := \sup\left\{\|\mu\|^{p'}_T : \mu \in F(\P^p(\C^d))\right\} < \infty.
\]
By assumption, $b$ and $\sigma$ are bounded, so for each $\phi \in C^\infty_0(\R^d)$ we may find $C_\phi > 0$ such that
\begin{align*}
|L\phi(t,x,\mu,a)| \le C_\phi,
\end{align*}
for all $(t,x,\mu,a)$. Moreover, $C_\phi$ depends only on $D\phi$ and $D^2\phi$. Let $\Q$ denote the set of probability measures $P$ on $\C^d$ satisfying the following: 
\begin{enumerate}
\item $P \circ X_0^{-1} = \lambda$,
\item $\E^P\|X\|^{p'}_T \le M$,
\item For each nonnegative $\phi \in C^\infty_0(\R^d)$, the process $\phi(X_t) + C_\phi t$ is a $P$-submartingale.
\end{enumerate}
It is clear both that $\Q$ is convex and that $F(\P^p(\C^d))$ is contained in $\Q$. It follows from \cite[Theorem 1.4.6]{stroockvaradhanbook} that $\Q$ is tight, and the $p'$-moment bound (2) ensures that it is relatively compact in $\P^p(\C^d)$. In fact, it is straightforward to check that $\Q$ is closed in $\P^p(\C^d)$, and thus it is compact.

Now note that $\Q$ is a subset of the space $\M(\C^d)$ of bounded signed measures on $\C^d$. When endowed with the topology $\tau_w$ of weak convergence, i.e. the topology $\tau_w = \sigma(\M(\C^d),C_b(\C^d))$ induced by bounded continuous functions, $\M(\C^d)$ is a locally convex Hausdorff space. Since $\Q$ is relatively compact in $\P^p(\C^d)$, the $p$-Wasserstein metric $d_{\C^d,p}$ on $\P^p(\C^d)$ and the topology $\tau_w$ on $\M(\C^d)$ both induce the same topology on $\Q$. Hence, $\Q$ is $\tau_w$-compact. The set-valued function $F$ maps $\Q$ into itself, it is upper hemicontinuous with respect to $\tau_w$ (equivalently, its graph is closed), and its values are nonempty, compact, and convex. Existence of a fixed point now follows from the Kakutani-Fan-Glicksberg theorem; see \cite[Theorem 1]{fan-fixedpoint1952} or \cite[Corollary 17.55]{aliprantisborder}.
\end{proof}

\begin{remark} \label{re:objectivegeneralization2}
If one is not interested in Markovian solutions, it is evident from the proofs of this section that a relaxed existence result holds with much more general objective structures, as indicated in Remark \ref{re:objectivegeneralization0}. In particular, we only used the fact that $J : \P^p(\C^d) \times \P^p(\Omega) \rightarrow \R$ is continuous and concave.
\end{remark}

\section{Unbounded coefficients} \label{se:unboundedcoefficients}
This section is devoted to the proof of Theorem \ref{th:relaxedexistence}, without assuming that $b$, $\sigma$, and $A$ are bounded. Assume throughout this section that assumption \ref{assumption:A} holds. Naturally, the idea is to approximate the data $(b,\sigma,A)$ with truncated versions which satisfy \ref{assumption:B}. Let $b_n$ and $\sigma_n$ denote the (pointwise) projections of $b$ and $\sigma$ into the ball centered at the origin with radius $n$ in $\R^d$ and $\R^{d \times m}$, respectively. Let $A_n$ denote the intersection of $A$ with the ball centered at the origin with radius $r_n$, where
\begin{align}
r_n := [n/(2c_1)]^{1/2}. \label{def:rndef}
\end{align}
(Recall that the constant $c_1$ comes from assumption (A.2).) For sufficiently large $n_0$, $A_n$ is nonempty and compact for all $n \ge n_0$, and thus we will always assume $n \ge n_0$ in what follows. Note that the truncated data $(b_n,\sigma_n,f,g,A_n)$ satisfy \ref{assumption:B} as well as \ref{assumption:A}. Moreover, (A.2) and (A.3) hold \emph{with the same constants} $c_1,c_2,c_3$.

By Theorem \ref{th:boundedexistence} there exists for each $n$ a corresponding MFG solution, which is technically a measure on $\Omega[A_n] = \V[A_n] \times \C^d$ but may naturally be viewed as a measure on $\Omega$, since $A_n \subset A$. To clarify: Since $A_n \subset A$ there is a natural embedding $\V[A_n] \hookrightarrow \V[A]$. Define $\RC_n(\mu)$ to be the set of $P \in \P(\Omega[A])$ satisfying the following:
\begin{enumerate}
\item $P(\Lambda([0,T] \times A^c_n)=0)=1$.
\item $P \circ X_0^{-1} = \lambda$.
\item $M^{\mu,\phi}[b_n,\sigma_n,A]$ is a $P$-martingale for each $\phi \in C^\infty_0(\R^d)$.
\end{enumerate}
Define
\[
\RC^*_n(\mu) := \arg\max_{P' \in \RC_n(\mu)}J^{\mu}[f,g,A](P)
\]
Then it is clear that $\RC_n(\mu)$ (resp. $\RC^*_n(\mu)$) is exactly the image of the set $\RC[b_n,\sigma_n,f,g,A_n](\mu)$ (resp. $\RC^*[b_n,\sigma_n,f,g,A_n](\mu)$) under the natural embedding $\P(\Omega[A_n]) \hookrightarrow \P(\Omega[A])$. Henceforth, we identify these sets. By Theorem \ref{th:boundedexistence}, there exist corresponding MFG solutions which may be interpreted as $\mu^n \in \P^p(\C^d)$ and $P_n \in \RC^*_n(\mu^n)$ with $\mu^n = P_n \circ X^{-1}$.

\subsection{Relative compactness of the approximations}
The strategy of the proof is to show that $P_n$ are relatively compact and then characterize the limit points as MFG solutions for the original data $(b,\sigma,f,g,A,\lambda)$. The following Lemma \ref{le:tight+moments} makes crucial use of the upper bound on $f$ of assumption (A.3) along with the assumption $p' > p$, in order to establish some uniform integrability of the controls.

\begin{lemma} \label{le:tight+moments}
The measures $P_n$ are relatively compact in $\P^p(\Omega[A])$. Moreover,
\begin{align}
\sup_n\E^{P_n}\int_0^T|\Lambda_t|^{p'}dt &< \infty \label{pf:unbounded3} \\
\sup_n\E^{P_n}\|X\|_T^{p'} = \sup_n\|\mu^n\|_T^{p'} &< \infty. \label{pf:unbounded4}
\end{align}
\end{lemma}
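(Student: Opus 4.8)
The plan is to derive uniform (in $n$) bounds on the $p'$-moments of both the control and the state, and then deduce relative compactness from these bounds. The engine for the moment bounds is the optimality of $P_n$ together with the penalization term $-c_3|a|^{p'}$ in the upper bound on $f$ from (A.3); the constraint $p' > p$ of (A.5) is then used to close a self-improving inequality. Throughout, the point that makes the estimates uniform is that the truncated coefficients $(b_n,\sigma_n)$ satisfy (A.2) with the \emph{same} constant $c_1$, so the constant $c_4$ of Lemma \ref{le:stateestimate} does not depend on $n$.

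First I would fix some $a_0 \in A_{n_0} \subset A_n$ (valid for all $n \ge n_0$, since $r_n$ is increasing) and let $P^n_{\mathrm{ref}} \in \RC_n(\mu^n)$ be the law corresponding to the constant control $a_0$, which exists by Proposition \ref{pr:sderepresentation} and the Lipschitz assumption (A.2). Using the lower bounds on $f$ and $g$ from (A.3) and Lemma \ref{le:stateestimate} with $\gamma = p$ (applied to $P^n_{\mathrm{ref}}$, whose control moment $\int_0^T|a_0|^p\,dt$ is a fixed constant), one obtains $J(\mu^n, P^n_{\mathrm{ref}}) \ge -C(1 + \|\mu^n\|_T^p)$ with $C$ independent of $n$. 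On the other hand, applying the upper bounds on $f$ and $g$ to $P_n$ itself, and using $\E^{P_n}\|X\|_T^p = \|\mu^n\|_T^p$ (since $\mu^n = P_n\circ X^{-1}$), gives
\begin{align*}
J(\mu^n, P_n) \le C(1 + \|\mu^n\|_T^p) - c_3\,\E^{P_n}\int_0^T|\Lambda_t|^{p'}\,dt.
\end{align*}
Optimality forces $J(\mu^n, P_n) \ge J(\mu^n, P^n_{\mathrm{ref}})$ (which is finite, so all quantities are well-defined), and combining the two displays yields the a priori estimate
\begin{align*}
\E^{P_n}\int_0^T|\Lambda_t|^{p'}\,dt \le C(1 + \|\mu^n\|_T^p),
\end{align*}
with $C$ independent of $n$; call this estimate $(\ast)$.

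The key step, which I expect to be the main obstacle, is to convert $(\ast)$ into a genuine uniform bound by bootstrapping against the state estimate. The second inequality of Lemma \ref{le:stateestimate} with $\gamma = p'$ (legitimate since $P_n\circ X^{-1} = \mu^n$) gives $\|\mu^n\|_T^{p'} \le c_4(1 + \E^{P_n}\int_0^T|\Lambda_t|^{p'}\,dt)$, and feeding in $(\ast)$ produces $\|\mu^n\|_T^{p'} \le C(1 + \|\mu^n\|_T^p)$. Lyapunov's moment inequality gives $\|\mu^n\|_T^p \le (\|\mu^n\|_T^{p'})^{p/p'}$, so writing $a_n := \|\mu^n\|_T^{p'}$ one arrives at an inequality of the form $a_n \le K_1 + K_2\, a_n^{p/p'}$ with exponent $p/p' < 1$ --- and it is precisely here that $p' > p$ is indispensable. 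Such a sublinear inequality cannot hold for arbitrarily large $a_n$, so $\sup_n a_n < \infty$; this is \eqref{pf:unbounded4}, and substituting back into $(\ast)$ yields \eqref{pf:unbounded3}.

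Finally, relative compactness of $\{P_n\}$ in $\P^p(\Omega[A])$ follows from the two moment bounds by now-standard arguments. The bound \eqref{pf:unbounded3} shows $\{P_n \circ \Lambda^{-1}\}$ is relatively compact in $\P^p(\V[A])$: the functional $q \mapsto \int_{[0,T]\times A}|a|^{p'}q(dt,da)$ has relatively compact sublevel sets in $\V[A]$ (a moment bound forces tightness via Markov's inequality, the first marginal being fixed as Lebesgue measure), and $p' > p$ upgrades weak relative compactness to relative compactness in the $p$-Wasserstein topology. Likewise, \eqref{pf:unbounded4} together with a Kolmogorov-type continuity estimate for the state process --- as in the compactness criterion used in Lemma \ref{le:rccontinuous}, now justified by the uniform moment bounds rather than by boundedness of $b,\sigma$ --- gives relative compactness of $\{P_n \circ X^{-1}\}$ in $\P^p(\C^d)$. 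Relative compactness of the joint laws $\{P_n\}$ in $\P^p(\Omega[A]) = \P^p(\V[A]\times\C^d)$ then follows from Lemma \ref{le:productrelcompactness}.
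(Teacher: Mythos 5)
Your proof is correct and follows essentially the same strategy as the paper's: compare the optimal $P_n$ against the constant control $a_0 \in A_{n_0}$, play the coercive term $-c_3|a|^{p'}$ of (A.3) against the lower bounds via optimality and Lemma \ref{le:stateestimate} (whose constant is uniform in $n$ precisely because the truncated data satisfy (A.2) with the same $c_1$), and close using $p' > p$. The only difference is bookkeeping: the paper eliminates $\|\mu^n\|_T^p$ in favor of $\E^{P_n}\int_0^T|\Lambda_t|^p dt$ and closes with the pointwise inequality $x^{p'} + \kappa_1 x^p \ge x^{p'}/2$ for large $x$, obtaining \eqref{pf:unbounded3} before \eqref{pf:unbounded4}, whereas you retain $\|\mu^n\|_T^p$ and close with the Lyapunov/Jensen inequality $\|\mu^n\|_T^p \le (\|\mu^n\|_T^{p'})^{p/p'}$, obtaining \eqref{pf:unbounded4} first; the two closures are interchangeable.

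One correction on the final step: the compactness criterion used in Lemma \ref{le:rccontinuous} is Proposition \ref{pr:itocompact}, which rests on Aldous' criterion, \emph{not} on a Kolmogorov-type continuity estimate, and the distinction is not cosmetic here. A Kolmogorov argument would need $\E^{P_n}|X_t - X_s|^{q} \le C|t-s|^{1+\epsilon}$ for some $q \le p'$, but the increments of $X$ involve $\E^{P_n}\int_s^t |\Lambda_u|^{p'}du$ through both the drift and the quadratic variation, and under \eqref{pf:unbounded3} this quantity vanishes as $t-s \to 0$ uniformly in $n$ but with \emph{no rate}, the control moments being merely integrable in time; moreover $p'$ may be arbitrarily close to $1$, which blocks a usable H\"older exponent even for bounded controls. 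Aldous' criterion asks exactly for this rate-free uniform smallness over stopping times, which is why the paper uses it. Since you ultimately defer to "the compactness criterion used in Lemma \ref{le:rccontinuous}," your plan does go through --- but note that the state-marginal tightness requires \eqref{pf:unbounded3} as well as \eqref{pf:unbounded4}, since the state increments are driven by the control; and your marginal-by-marginal decomposition followed by Lemma \ref{le:productrelcompactness} is in fact what Proposition \ref{pr:omegacompact} does internally, so nothing is lost relative to the paper's one-shot application of Proposition \ref{pr:itocompact}.
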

\begin{proof}
Noting that the coefficients $(b_n,\sigma_n)$ satisfy \ref{assumption:A} with the same constants (independent of $n$), the second conclusion of Lemma \ref{le:stateestimate} implies
\begin{align}
\|\mu^n\|_T^p = \E^{P_n}\|X\|^p_T \le c_4\left(1 + \E^{P_n}\int_0^T|\Lambda_t|^pdt\right). \label{pf:unbounded1}
\end{align}
Fix $a_0 \in A_{n_0}$. For $n \ge n_0$, let $Q_n$ denote the unique element of $\RC_n(\mu^n)$ satisfying $Q_n(\Lambda_t = \delta_{a_0} \text{ for a.e. } t) = 1$. That is, $Q_n$ is the law of the solution of the state equation arising from the constant control equal to $a_0$. The first part of Lemma \ref{le:stateestimate} implies
\begin{align}
\E^{Q_n}\|X\|^{p}_T &\le c_4\left(1 + \|\mu^n\|_T^{p} + T|a_0|^{p}\right) \le C_0\left(1 + \E^{P_n}\int_0^T|\Lambda_t|^{p}dt\right), \label{pf:unbounded2}
\end{align}
where the constant $C_0 > 0$ depends only on $c_4$, $T$, $p$, and $a_0$. Use the optimality of $P_n$, the lower bounds on $f$ and $g$, and then \eqref{pf:unbounded1} and \eqref{pf:unbounded2} to get
\begin{align}
J(\mu^n,P_n) &\ge J(\mu^n,Q_n) \ge -c_2(T+1)\left(1 + \E^{Q_n}\|X\|_T^{p} + \|\mu^n\|_T^{p} + |a_0|^{p'}\right) \nonumber \\
	&\ge -C_1\left(1 + \E^{P_n}\int_0^T|\Lambda_t|^pdt\right), \label{pf:unbounded2-1}
\end{align}
where $C_1 > 0$ depends only on $c_2$, $c_4$, $T$, $p$, $p'$, and $a_0$. On the other hand, we may use the upper bounds on $f$ and $g$ along with \eqref{pf:unbounded1} to get
\begin{align}
J(\mu^n,P_n) &\le c_2(T+1)\left(1 + \E^{P_n}\|X\|_T^p + \|\mu^n\|^p_T\right) - c_3\E^{P_n}\int_0^T|\Lambda_t|^{p'}dt \nonumber \\
	&\le C_2\left(1 + \E^{P_n}\int_0^T|\Lambda_t|^pdt\right) - c_3\E^{P_n}\int_0^T|\Lambda_t|^{p'}dt, \label{pf:unbounded2-2}
\end{align}
where $C_2 > 0$ depends only on $c_2$, $c_3$, $c_4$, $T$, $p$, and $a_0$. Combining \eqref{pf:unbounded2-1} and \eqref{pf:unbounded2-2} and rearranging, we find two constants, $\kappa_1 \in \R$ and $\kappa_2 > 0$, such that
\[
\E^{P_n}\int_0^T(|\Lambda_t|^{p'} + \kappa_1|\Lambda_t|^p)dt \le \kappa_2.
\]
(Note that $\E^{P_n}\int_0^T|\Lambda_t|^pdt < \infty$ for each $n$.) Crucially, these constants are independent of $n$. Since $p' > p$, it holds for all sufficiently large $x$ that $x^{p'} + \kappa_1 x^p \ge x^{p'}/2$, and \eqref{pf:unbounded3} follows. Combined with the second conclusion of Lemma \ref{le:stateestimate}, this implies \eqref{pf:unbounded4}. Finally, relative compactness of $P_n$ is proven by an application of Aldous' criterion, detailed in Proposition \ref{pr:itocompact}.
\end{proof}

\subsection{Limiting state process dynamics}
Now that we know $P_n$ are relatively compact, we may fix $P \in \P^p(\Omega[A])$ and a subsequence $n_k$ such that $P_{n_k} \rightarrow P$ in $\P^p(\Omega[A])$. Define $\mu := P \circ X^{-1}$, and note that $\mu^{n_k} \rightarrow \mu$ in $\P^p(\C^d)$.

\begin{lemma} \label{le:limitpoint}
The limit point $P$ satisfies $P \in \RC[b,\sigma,A](\mu)$, $\mu = P \circ X^{-1}$, and also
\[
\E^P\int_0^T|\Lambda_t|^{p'}dt < \infty.
\]
\end{lemma}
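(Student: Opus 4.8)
The plan is to verify the three claims in turn, treating the martingale property as the only substantial point. The identity $\mu = P\circ X^{-1}$ is nothing but the definition of $\mu$. For the $p'$-moment bound, note that $(q,x)\mapsto \int_0^T|\Lambda_t|^{p'}dt = \int_{[0,T]\times A}|a|^{p'}\,q(dt,da)$ is nonnegative and lower semicontinuous on $\Omega[A]$, being the increasing supremum over $M$ of the bounded continuous functionals obtained by replacing $|a|^{p'}$ with $|a|^{p'}\wedge M$. Hence $P\mapsto \E^P\int_0^T|\Lambda_t|^{p'}dt$ is lower semicontinuous for weak convergence, and since $\P^p$-convergence is stronger,
\[
\E^P\int_0^T|\Lambda_t|^{p'}dt \le \liminf_k \E^{P_{n_k}}\int_0^T|\Lambda_t|^{p'}dt \le \sup_n \E^{P_n}\int_0^T|\Lambda_t|^{p'}dt < \infty
\]
by \eqref{pf:unbounded3}. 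This also yields condition (2) in the definition of $\RC[b,\sigma,A](\mu)$, since $|a|^p\le 1+|a|^{p'}$ gives $\E^P\int_0^T|\Lambda_t|^p dt\le T+\E^P\int_0^T|\Lambda_t|^{p'}dt<\infty$.

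It remains to show $P\in\RC[b,\sigma,A](\mu)$. The initial law is immediate, as $(q,x)\mapsto x_0$ is continuous, so $P\circ X_0^{-1}=\lim_kP_{n_k}\circ X_0^{-1}=\lambda$. For the martingale property I fix $0\le s<t\le T$, $\phi\in C^\infty_0(\R^d)$, and bounded continuous $\F_s$-measurable $h$, and aim to show $\E^P[(M^{\mu,\phi}_t-M^{\mu,\phi}_s)h]=0$; this suffices since $M^{\mu,\phi}_t\in L^1(P)$ by the moment bound just established. Because each $P_{n_k}\in\RC_{n_k}(\mu^{n_k})$, the process $M^{\mu^{n_k},\phi}[b_{n_k},\sigma_{n_k},A]$ is a $P_{n_k}$-martingale, so the corresponding increment tested against $h$ vanishes. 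As the boundary terms $\phi(X_r)h$ are bounded continuous and hence converge, the entire problem reduces to proving, for $r\in\{s,t\}$,
\[
\E^{P_{n_k}}\!\!\left[h\int_{[0,r]\times A}\!\!\!\!\Lambda(du,da)\,L[b_{n_k},\sigma_{n_k},A]\phi(u,X_u,\mu^{n_k}_u,a)\right]\longrightarrow \E^{P}\!\!\left[h\int_{[0,r]\times A}\!\!\!\!\Lambda(du,da)\,L[b,\sigma,A]\phi(u,X_u,\mu_u,a)\right].
\]

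I would split the integrand into a coefficient-truncation error and a mean-field error. The truncation error is handled by the tailored choice \eqref{def:rndef} of $r_n$: writing $K$ for the fixed compact support of $D\phi,D^2\phi$ and $C_K:=\sup_{x\in K}|x|$, and using that $\|\mu^n\|^p_T$ is bounded uniformly in $n$ by \eqref{pf:unbounded4}, the growth bounds (A.2) give $\sup\{\,|b(u,x,\mu^n_u,a)|:x\in K,\ a\in A_n\,\}\le c_1(1+C_K+C_\mu+r_n)\le n$ and, since $p_\sigma\le 2$ forces $r_n^{p_\sigma}\le r_n^2=n/2c_1$, also $\sup\{\,|\sigma\sigma^\top(u,x,\mu^n_u,a)|:x\in K,\ a\in A_n\,\}\le C'+n/2\le n$ for all large $n$. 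Thus $b_{n}=b$ and $\sigma_{n}=\sigma$ on $K\times A_n$ once $n\ge N(K)$; since $\Lambda$ lives on $A_n$ under $P_n$ while the integrand is supported on $\{X_u\in K\}$, the truncation error is \emph{exactly} zero for large $n$. For the remaining mean-field error I invoke continuity: by (A.1) the functional $(\nu,q,x)\mapsto\int_{[0,r]\times A}q(du,da)L[b,\sigma,A]\phi(u,x_u,\nu_u,a)$ is continuous on $\P^p(\C^d)\times\Omega$ with at most $p$-growth in $(q,x)$ (this is Corollary \ref{co:V}, the point being $|L\phi|\le C_\phi(1+|a|^p)$ on $K$ because $p\ge 1\vee p_\sigma$). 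Continuity in the parameter $\nu$ with $\mu^{n_k}\to\mu$ gives continuous convergence of the integrands, and the gap $p'>p$ supplies the uniform integrability: by Jensen and \eqref{pf:unbounded3}, $\sup_n\E^{P_n}\big(\tfrac1T\int_0^T|\Lambda_u|^p du\big)^{p'/p}\le \sup_n\E^{P_n}\tfrac1T\int_0^T|\Lambda_u|^{p'}du<\infty$ with $p'/p>1$, so the family $\{\int|a|^p\Lambda\}$ is uniformly integrable under $\{P_{n_k}\}$. A Skorokhod representation of $P_{n_k}\to P$ turns continuous convergence into almost sure convergence along the coupling, and uniform integrability upgrades this to $L^1$-convergence, giving the displayed limit and hence the martingale property.

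The main obstacle is precisely this last convergence. Unlike the bounded-coefficient setting of Lemma \ref{le:rccontinuous}, the generator integrand $L\phi$ is unbounded in the control (of order $|a|^p$), so bounded continuity is unavailable. The two devices that overcome this are the truncation radius \eqref{def:rndef}, engineered so that $b_n,\sigma_n$ already coincide with $b,\sigma$ on the relevant compact region and the coefficient-approximation error disappears outright, and the strict inequality $p'>p$ of (A.5), which through \eqref{pf:unbounded3} furnishes the uniform integrability needed to push $\P^p$-convergence of $P_{n_k}$ through an integrand of $p$-growth.
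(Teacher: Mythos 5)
Your proof is correct, and its overall skeleton matches the paper's: verify the moment bound by a semicontinuity/Fatou argument, test martingale increments against bounded continuous $\F_s$-measurable $h$, and split the generator term into a coefficient-truncation error plus a mean-field continuity limit, using \eqref{def:rndef} for the former and $p'>p$ for the latter. The one step where you genuinely diverge is the truncation error. The paper does not use the compact support of $D\phi,D^2\phi$: it bounds $|b_n-b|$ by $2c_1(Z_1+|a|)1_{\{c_1(Z_1+|a|)>n\}}$, where $Z_1$ involves the unbounded variable $\|X\|_T$, kills the $1_{\{2c_1|a|>n\}}$ part using the choice \eqref{def:rndef} of $r_n$, and then shows the remaining term vanishes in $L^1(P_n)$ uniformly via the moment bounds of Lemma \ref{le:tight+moments} and $p'>p\ge 1\vee p_\sigma$ (this is \eqref{pf:mlimit2}). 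You instead exploit that the integrand vanishes off $\{X_u\in K\}$, with $K$ the compact support of the derivatives of $\phi$, so that on $K\times A_n$ the growth bounds (A.2), the uniform bound $\sup_n\|\mu^n\|^p_T<\infty$ from \eqref{pf:unbounded4}, and $c_1r_n=O(\sqrt{n})$ give $|b|\le n$, hence $b_n=b$ there; the truncation error is then \emph{exactly} zero $P_n$-a.s.\ for all large $n$. That buys a cleaner argument: no indicator estimate and no use of the $p'$-moments of $\|X\|_T$ or of $\Lambda$ in this step (they are of course still needed in the mean-field limit, where your Skorokhod-plus-uniform-integrability mechanism is an explicit rendering of the paper's appeal to Corollary \ref{co:V} and Proposition \ref{pr:wasserstein}, and where your Jensen computation correctly supplies the required uniform integrability from \eqref{pf:unbounded3}). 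One small imprecision to patch: the truncation defining $\sigma_n$ acts on $\sigma$, not on $\sigma\sigma^\top$, so from your bound $|\sigma\sigma^\top|\le C'+n/2$ on $K\times A_n$ you should add that $|\sigma|^2\le C_d\,|\sigma\sigma^\top|=O(n)$, hence $|\sigma|=O(\sqrt{n})\le n$ for large $n$, before concluding $\sigma_n=\sigma$; the same $O(\sqrt{n})$ margin you used for $b$ makes this harmless.
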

\begin{proof}
It is immediate that $\mu = \lim_k\mu^{n_k} = \lim_kP_{n_k} \circ X^{-1} = P \circ X^{-1}$, and in particular $P \circ X_0^{-1} = \lambda$. Fatou's lemma and \eqref{pf:unbounded3} imply
\[
\E^P\int_0^T|\Lambda_t|^{p'}dt \le \liminf_{k\rightarrow\infty}\E^{P_{n_k}}\int_0^T|\Lambda_t|^{p'}dt < \infty.
\]
We must only prove $P \in \RC[b,\sigma,A](\mu)$. Fix $\phi \in C^\infty_0(\R^d)$, and note that $M^{\mu^n,\phi}_t[b_n,\sigma_n,A_n]$ is a $P_n$ martingale for each $n$. We must show that $M^{\mu,\phi}_t[b,\sigma,A]$ is a $P$-martingale.

Note that $M^{\mu^n,\phi}_t[b_n,\sigma_n,A]$ may be identified with $M^{\mu^n,\phi}_t[b_n,\sigma_n,A_n]$, since $P_n$-almost surely $\Lambda$ is concentrated on $[0,T] \times A_n$. Letting $L_n$ denote the generator associated to $(b_n,\sigma_n)$, we have
\begin{align}
M^{\mu^n,\phi}_t&[b_n,\sigma_n,A](q,x) - M^{\mu^n,\phi}_t[b,\sigma,A](q,x) \nonumber \\
	&= \int_0^tds\int_A\Lambda_s(da)\left(L_n\phi(s,x_s,\mu^n_s,a) - L\phi(s,x_s,\mu^n_s,a)\right) \nonumber \\
	&= \int_0^tds\int_A\Lambda_s(da)\left(b_n(s,x_s,\mu^n_s,a) - b(s,x_s,\mu^n_s,a)\right)^\top D\phi(x_s) + \nonumber \\
	&\quad\quad\quad\quad + \frac{1}{2}\text{Tr}\left[\left(\sigma_n\sigma_n^\top(s,x_s,\mu^n_s,a) - \sigma\sigma^\top(s,x_s,\mu^n_s,a)\right)D^2\nabla\phi(x_s)\right]. \label{pf:mlimit}
\end{align}
By construction, $b_n(s,x_s,\mu^n_s,a) \neq b(s,x_s,\mu^n_s,a)$ implies $|b(s,x_s,\mu^n_s,a)| > n$, which by assumption (A.2) implies 
\begin{align}
n < c_1\left(1 + |x_s| + \left(\int_{\R^d}|z|^p\mu^n_s(dz)\right)^{1/p} + |a|\right). \label{pf:mlimit00}
\end{align}
Moreover, $|b_n(s,x_s,\mu^n_s,a) - b(s,x_s,\mu^n_s,a)|$ is bounded above by twice the right-hand side of \eqref{pf:mlimit00}.
For $\gamma \in (0,p']$, denote
\[
Z_\gamma := 1 + \|X\|_T^\gamma + \left(\sup_n\int_{\C^d}\|z\|_T^p\mu^n(dz)\right)^{\gamma/p},
\]
noting that the supremum is finite by Lemma \ref{le:tight+moments}. Let $C > 0$ bound the first two derivatives of $\phi$. Because of the definition \eqref{def:rndef} of $r_n$, for $n \ge 2c_1$ and $\gamma \in [0,2]$ we have
\begin{align*}
\Lambda\{(t,a) : 2c_1|a|^\gamma > n\} \le \Lambda\{(t,a) : 2c_1|a|^2 > n\} = 0, \ P_n-a.s. 
\end{align*}
Hence
\begin{align*}
\int_0^tds & \int_A\Lambda_s(da)\left|\left(b_n(s,X_s,\mu^n_s,a) - b(s,X_s,\mu^n_s,a)\right)^\top D\phi(X_s)\right| \\
	&\le C\int_0^tds\int_A\Lambda_s(da)2c_1(Z_1 + |a|)1_{\{c_1(Z_1 + |a|) > n\}} \\
	&\le 2Cc_1\int_0^tds\int_A\Lambda_s(da)\left(Z_1 + |a|\right)\left(1_{\{2c_1Z_1 > n\}} + 1_{\{2c_1|a| > n\}}\right) \\
	&\le 2Cc_1\left(tZ_1 + \int_0^t|\Lambda_s|ds\right)1_{\{2c_1Z_1 > n\}}, \ P_n-a.s.
\end{align*}
We have a similar bound for the $\sigma_n\sigma_n^\top-\sigma\sigma^\top$ term:
\begin{align*}
\int_0^tds & \int_A\Lambda_s(da)\left|\text{Tr}\left[\left(\sigma_n\sigma_n^\top(s,X_s,\mu^n_s,a) - \sigma\sigma^\top(s,X_s,\mu^n_s,a)\right)D^2\nabla\phi(x_s)\right]\right| \\
	&\le C\int_0^tds\int_A\Lambda_s(da)2c_1(Z_{p_\sigma} + |a|^{p_\sigma})1_{\{c_1(Z_{p_\sigma} + |a|^{p_\sigma}) > n\}} \\
	&\le 2Cc_1\left(tZ_{p_\sigma} + \int_0^t|\Lambda_s|^{p_\sigma}ds\right)1_{\{2c_1Z_{p_\sigma} > n\}}, \ P_n-a.s.
\end{align*}
Note that \eqref{pf:unbounded4} implies $\sup_n\|\mu^n\|_T^p < \infty$. Returning to \eqref{pf:mlimit}, it holds $P_n$-a.s. that
\begin{align*}
&\left|M^{\mu^n,\phi}_t[b_n,\sigma_n,A] - M^{\mu^n,\phi}_t[b,\sigma,A]\right| \\
&\quad\quad \le 2Cc_1\left[\left(TZ_1 + \int_0^T|\Lambda_s|ds\right)1_{\{2c_1Z_1 > n\}} +  \left(TZ_{p_\sigma} + \int_0^T|\Lambda_s|^{p_\sigma}ds\right)1_{\{2c_1Z_{p_\sigma} > n\}}\right] 
\end{align*}
for all $t \in [0,T]$. Since $1 \vee p_\sigma \le p < p'$ by assumption (A.5), and since Lemma \ref{le:tight+moments} yields
\[
\sup_n\E^{P_n}\left[\|X\|^{p'}_T + \int_0^T|\Lambda_t|^{p'}dt\right] < \infty,
\]
we have 
\begin{align}
\lim_{n\rightarrow\infty}\E^{P_n}\left|M^{\mu^n,\phi}_t[b_n,\sigma_n,A] - M^{\mu^n,\phi}_t[b,\sigma,A]\right| = 0. \label{pf:mlimit2}
\end{align}
On the other hand, the map
\[
\P^p(\C^d) \times \Omega[A] \ni (\nu,q,x) \mapsto M^{\nu,\phi}_t[b,\sigma,A](q,x) \in \R
\]
is jointly continuous for each $t$, by Corollary \ref{co:V}(2). Fix $s < t$ and a bounded, continuous, and $\F_s$-measurable $h : \Omega \rightarrow \R$. Then, since $P_{n_k} \rightarrow P$ in $\P^p(\Omega[A])$, and since $M^{\mu,\phi}$ grows with order $1 \vee p_\sigma \le p$, we have (by Proposition \ref{pr:wasserstein})
\begin{align}
\lim_{k\rightarrow\infty}\E^{P_{n_k}}&\left[\left(M^{\mu^{n_k},\phi}_t[b,\sigma,A] - M^{\mu^{n_k},\phi}_s[b,\sigma,A]\right)h \right] \nonumber \\
	&= \E^{P}\left[\left(M^{\mu,\phi}_t[b,\sigma,A] - M^{\mu,\phi}_s[b,\sigma,A]\right)h \right]. \label{pf:mlimit3}
\end{align}
Since $M^{\mu^n,\phi}_t[b_n,\sigma_n,A]$ is a $P_n$-martingale, combining \eqref{pf:mlimit2} and \eqref{pf:mlimit3} yields
\begin{align*}
0 &= \lim_{k\rightarrow\infty}\E^{P_{n_k}}\left[\left(M^{\mu^{n_k},\phi}_t[b_{n_k},\sigma_{n_k},A] - M^{\mu^{n_k},\phi}_s[b_{n_k},\sigma_{n_k},A]\right)h \right] \\
	&= \E^{P}\left[\left(M^{\mu,\phi}_t[b,\sigma,A] - M^{\mu,\phi}_s[b,\sigma,A]\right)h \right].
\end{align*}
Hence $M^{\mu,\phi}_t[b,\sigma,A]$ is a $P$-martingale, and the proof is complete.
\end{proof}

\subsection{Optimality of the limiting control}
It remains to show that the limit point $P$ is optimal, or $P \in \RC^*[b,\sigma,f,g,A](\mu)$. The crucial tool is the following lemma.

\begin{lemma} \label{le:limp'}
For each $P' \in \RC[b,\sigma,A](\mu)$ such that $J[f,g,A](\mu,P') > -\infty$, there exists $P'_n \in \RC_n(\mu^n)$ such that
\begin{align}
J[f,g,A](\mu,P') &= \lim_{k\rightarrow\infty}J[f_{n_k},g_{n_k},A_{n_k}](\mu^{n_k},P'_{n_k}). \label{pf:optimal3}
\end{align}
\end{lemma}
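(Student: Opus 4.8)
The plan is to build $P'_n$ by explicitly truncating the control inside $P'$ and re-solving the truncated state equation driven by the \emph{same} noise, and then to pass to the limit by a pathwise stability estimate combined with dominated convergence. The whole argument is powered by one integrability fact, which I would record first. Since the positive part of $\Gamma^\mu$ is always $P'$-integrable, the hypothesis $J[f,g,A](\mu,P') > -\infty$ forces $J(\mu,P')$ to be finite. Feeding the upper bound $f \le c_2(1+|x|^p+|\mu|^p) - c_3|a|^{p'}$ of (A.3) into the definition of $J$, using the moment bound $\E^{P'}\|X\|_T^p < \infty$ from Lemma \ref{le:stateestimate} together with $\|\mu\|_T^p < \infty$ (as $\mu \in \P^p(\C^d)$), and rearranging gives
\[
c_3\,\E^{P'}\int_0^T|\Lambda_t|^{p'}\,dt \;\le\; C - J(\mu,P') \;<\; \infty .
\]
This finite $p'$-moment of the control is the source of every uniform-integrability estimate below. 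I would then represent $P'$ via Proposition \ref{pr:sderepresentation}: on some $(\Omega',\F'_t,P')$ there are a process $X$, orthogonal martingale measures $N=(N^1,\dots,N^m)$ with intensity $\Lambda_t(da)\,dt$, and the state equation \eqref{def:SDE} for the data $(b,\sigma)$ and input $\mu$.

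For the construction, fix $a_0 \in A_{n_0}$ and define the measurable truncation $\pi_n : A \to A_n$ by $\pi_n(a)=a$ for $a \in A_n$ and $\pi_n(a)=a_0$ otherwise; it satisfies $|\pi_n(a)|^{p'} \le |a|^{p'}+|a_0|^{p'}$ and $\pi_n(a)\to a$ for every $a$, with $\{\pi_n(a)\neq a\}=\{|a|>r_n\}$ (recall \eqref{def:rndef}). Set $\Lambda^n := (\pi_n)_*\Lambda$ and take $N^n := (\pi_n)_*N$, the pushforward martingale measures, which are again orthogonal with intensity $\Lambda^n_t(da)\,dt$ and satisfy $\int_A \psi(a)\,N^n(da,ds) = \int_A \psi(\pi_n(a))\,N(da,ds)$. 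Because $b_n,\sigma_n$ are the pointwise ball-projections of the Lipschitz coefficients, they are themselves $c_1$-Lipschitz in $x$, so the truncated state equation with input $\mu^n$ and driving noise $N^n$ has a unique strong solution $X^n$; I put $P'_n := P' \circ (\Lambda^n,X^n)^{-1}$. An application of Itô's formula shows $P'_n \in \RC_n(\mu^n)$ (its control is $A_n$-valued and has finite $p$-moment, giving conditions (1)--(3)), and since $\Lambda^n$ is concentrated on $A_n$ we have $J[f_{n_k},g_{n_k},A_{n_k}](\mu^{n_k},P'_{n_k}) = J(\mu^{n_k},P'_{n_k})$.

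The analytic core is the stability estimate $\E^{P'}\|X^n-X\|_T^\gamma \to 0$ for a suitable $\gamma \in (p,p']$. The key point is that the pushforward coupling lets $X^n$ and $X$ be driven by the \emph{same} $N$ and $\Lambda$, so a Burkholder--Davis--Gundy and Gronwall argument reduces matters to showing that three perturbation terms vanish in $L^\gamma$: the coefficient truncations $b_n-b$, $\sigma_n-\sigma$; the input perturbation $\mu^n\to\mu$; and the control relabeling $a\mapsto\pi_n(a)$. I expect the control-relabeling term to be the main obstacle: it lives on $\{|a|>r_n\}$ and is dominated there by a multiple of $1+|X_s|+\|\mu^n\|_s+|a|$ for the drift and of the corresponding quantity with $|a|^{p_\sigma}$ for the diffusion. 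This is handled by dominated convergence, using $\sup_n \E^{P'}\int_0^T |\Lambda^n_t|^{p'}\,dt < \infty$ and the strict gap $p' > p \ge 1\vee p_\sigma$, exactly in the spirit of the computation in Lemma \ref{le:limitpoint}.

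Finally, with $X^n \to X$ in $L^\gamma$ (hence in probability), $\mu^n \to \mu$, and $\pi_n(a)\to a$ in hand, the identity \eqref{pf:optimal3} follows from dominated convergence applied to
\[
J(\mu^n,P'_n) = \E^{P'}\!\left[\int_0^T\!\!\int_A f(t,X^n_t,\mu^n_t,\pi_n(a))\,\Lambda_t(da)\,dt + g(X^n_T,\mu^n_T)\right].
\]
Continuity of $f$ and $g$ gives pointwise convergence of the integrands, while the two-sided growth bound $|f(t,x,\nu,a)| \le c_2(1+|x|^p+|\nu|^p+|a|^{p'})$ supplies an integrable majorant: $\sup_n \E^{P'}\|X^n\|_T^{p'} < \infty$ by Lemma \ref{le:stateestimate} (the truncated data share the constants), and $|\pi_n(a)|^{p'} \le |a|^{p'}+|a_0|^{p'}$ is dominated by a fixed $\Lambda$-integrable function. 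This yields $J(\mu^{n_k},P'_{n_k}) \to J[f,g,A](\mu,P')$, completing the proof.
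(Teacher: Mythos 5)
Your proposal is correct and takes essentially the same approach as the paper: the same preliminary bound on $\E^{P'}\int_0^T|\Lambda_t|^{p'}dt$, the same representation via Proposition \ref{pr:sderepresentation}, and a retraction $\pi_n$ onto $A_n$ that is a concrete instance of the paper's map $\iota_n$, with your pushforward of $(\Lambda,N)$ yielding exactly the paper's truncated state equation driven by the same noise. The remaining steps (the Burkholder--Davis--Gundy/Gronwall stability estimate and the passage to the limit in $J$ via the uniform $p'$-moment bounds and the gap $p'>p$) likewise mirror the paper's argument.
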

\begin{proof}
First, the upper bounds of $f$ and $g$ imply
\begin{align*}
J[f,g,A](\mu,P') &\le c_2(T+1)\left(1 + \E^{P'}\|X\|_T^p + \|\mu\|^p_T\right) - c_3\E^{P'}\int_0^Tdt|\Lambda_t|^{p'}.
\end{align*}
Since $\|\mu\|^p_T < \infty$ and $\E^{P'}\|X\|_T^p < \infty$, the assumption $J[f,g,A](\mu,P') > -\infty$ implies
\begin{align}
\E^{P'}\int_0^Tdt|\Lambda_t|^{p'} < \infty. \label{pf:lambdaintegrable2}
\end{align}
By Proposition \ref{pr:sderepresentation}, we may find a filtered probability space $(\Omega',\F'_t,Q')$ supporting a $d$-dimensional $\F'_t$-adapted process $X$ as well as $m$ orthogonal $\F'_t$-martingale measures $N = (N^1,\ldots,N^m)$ on $A \times [0,T]$ with intensity $\Lambda_t(da)dt$, such that $Q' \circ (\Lambda,X)^{-1} = P'$ and the state equation \eqref{def:SDE} holds. Find a measurable map $\iota_n : A \rightarrow A$ such that $\iota_n(A) \subset A_n$ and $\iota_n(a) = a$ for all $a \in A_n$, so that $\iota_n$ converges pointwise to the identity. Let $X^n$ denote the unique strong solution of
\[
dX^n_t = \int_Ab_n(t,X^n_t,\mu^n_t,\iota_n(a))\Lambda_t(da)dt + \int_A\sigma_n(t,X^n_t,\mu^n_t,\iota_n(a))N(da,dt), \ X^n_0 = X_0.
\]
Let $\Lambda^n$ denote the image of $\Lambda$ under the map $(t,a) \mapsto (t,\iota_n(a))$. Then $Q'(\Lambda^n \in \V[A_n])=1$, and it is easy to check that $P'_n := Q' \circ (\Lambda^n,X^n)^{-1}$ is in $\RC_n(\mu^n)$. Note that $\Lambda^n \rightarrow \Lambda$ holds $Q'$-a.s., and we will show also that $\E^{Q'}\|X^{n_k}-X\|^p_T \rightarrow 0$. To this end, note that
\begin{align*}
X^n_t - X_t = &\int_{[0,t] \times A}b_n(s,X^n_s,\mu^n_s,\iota_n(a)) - b_n(s,X_s,\mu^n_s,\iota_n(a))\Lambda_s(da)ds \\
	&+ \int_{[0,t] \times A}b_n(s,X_s,\mu^n_s,\iota_n(a)) - b(s,X_s,\mu_s,a)\Lambda_s(da)ds \\
	&+ \int_{[0,t] \times A}\sigma_n(s,X^n_s,\mu^n_s,\iota_n(a)) - \sigma_n(s,X_s,\mu^n_s,\iota_n(a))N(da,ds) \\
	&+ \int_{[0,t] \times A}\sigma_n(s,X_s,\mu^n_s,\iota_n(a)) - \sigma(s,X_s,\mu_s,a)N(da,ds).
\end{align*}
Use Jensen's inequality, the Lipschitz estimate, and the Burkholder-Davis-Gundy inequality to find a constant $C > 0$, independent of $n$, such that
\begin{align}
\E^{Q'}\|X^n - X\|_t^p \le &C\E^{Q'}\int_0^t\|X^n-X\|_s^pds \nonumber \\
	&+ C\E^{Q'}\int_{[0,t] \times A}\left|b_n(s,X_s,\mu^n_s,\iota_n(a)) - b(s,X_s,\mu_s,a)\right|^p\Lambda_s(da)ds \nonumber \\
	&+ C\E^{Q'}\left[\left(\int_0^t\|X^n-X\|_s^2ds\right)^{p/2}\right] \nonumber \\
	&+ C\E^{Q'}\left[\left(\int_{[0,t] \times A}\left|\sigma_n(s,X_s,\mu^n_s,\iota_n(a)) - \sigma(s,X_s,\mu_s,a)\right|^2\Lambda_s(da)ds\right)^{p/2}\right]. \nonumber
\end{align}
Let us label these terms $I_n$, $II_n$, $III_n$, and $IV_n$.
Recall that $p \ge 1$, by assumption (A.5).
If $p \ge 2$, note that $III_n \le CI_n$, for some new constant $C$.
On the other hand, if $p \in [1,2)$, we use Young's inequality in the form of $|xy| \le \epsilon^q|x|^q/q + \epsilon^{-q'}|y|^{q'}/q'$, where $q = 2/(2-p)$, $q'=2/p$, and $\epsilon > 0$. We deduce that
\begin{align*}
\E^{Q'}\left[\left(\int_0^t\|X^n-X\|_s^2ds\right)^{p/2}\right] &\le \E^{Q'}\left[\|X^n-X\|_t^{(2-p)p/2}\left(\int_0^t\|X^n-X\|_s^pds\right)^{p/2}\right] \\
	&\le \epsilon^{\frac{2}{2-p}}\left(1-\frac{p}{2}\right)\E^{Q'}\|X^n-X\|^p_t + \frac{p}{2\epsilon^{p/2}}\E^{Q'}\int_0^t\|X^n-X\|^p_sds.
\end{align*}
By choosing $\epsilon$ sufficiently small, we deduce
\[
\E^{Q'}\|X^n - X\|_t^p \le C(I_n + II_n + IV_n),
\]
for a new constant $C$. Now, once we show that $II_{n_k}$ and $IV_{n_k}$ tend to zero, we may conclude from Gronwall's inequality that $\E^{Q'}\|X^{n_k}-X\|^p_T \rightarrow 0$.
Since $|\iota_n(a)| \le |a|$ for all $a \in A$, there is another constant (again called) $C$ such that
\begin{align*}
\int_0^tds\int_A\Lambda_s(da)&\left|b_n(t,X_t,\mu^n_t,\iota_n(a)) - b(t,X_t,\mu_t,a)\right|^p \\
	&\le C\left(1 + \|X\|_T^p + \|\mu^n\|^p_T + \|\mu\|^p_T + \int_0^Tdt|\Lambda_t|^p\right),
\end{align*}
and similarly for the term involving $\sigma$, using $p_\sigma \le 2$ as in the proof of Lemma \ref{le:stateestimate}. Lemma \ref{le:stateestimate} implies that the right side above is $Q'$-integrable, and recall from \eqref{pf:unbounded4} that $\sup_n\|\mu^n\|^p_T  < \infty$. Since $\mu^{n_k} \rightarrow \mu$ and $\iota_n(a) \rightarrow a$ for each $a \in A$, the dominated convergence theorem shows that $II_{n_k}$ and $IV_{n_k}$ tend to zero.

With the convergence $\E^{Q'}\|X^{n_k}-X\|^p_T \rightarrow 0$ now established, the proof of the Lemma is nearly complete.
Note that $\|X^{n_k}\|_T^p$ are uniformly $Q'$-integrable, and
\[
\int_0^Tdt|\Lambda^n_t|^{p'} \le \int_0^Tdt|\Lambda_t|^{p'},
\]
and the latter is $Q'$-integrable, as in \eqref{pf:lambdaintegrable2}. Assumption (A.3) and \eqref{pf:unbounded4} then imply that both $g(X^n_T,\mu^n_T)$ and
\[
\int_0^Tdt\int_A\Lambda^n_t(da)f(t,X^n_t,\mu^n_t,a) = \int_0^Tdt\int_A\Lambda_t(da)f_n(t,X^n_t,\mu^n_t,\iota_n(a))
\]
are uniformly $Q'$-integrable. Since $\mu^{n_k} \rightarrow \mu$ and $\iota_n(a) \rightarrow a$, assumption (A.1) (continuity of $f$ and $g$) and the convergence of $X^{n_k}$ imply that
\[
g(X^{n_k}_T,\mu^{n_k}_T) - g(X_T,\mu_T) + \int_0^Tdt\int_A\Lambda_t(da)\left(f(t,X^{n_k}_t,\mu^{n_k}_t,\iota_{n_k}(a)) - f(t,X_t,\mu_t,a)\right) \rightarrow 0
\]
in $Q'$-measure. Now \eqref{pf:optimal3} follows from the dominated convergence theorem, after a transformation to the space $(\Omega',F'_t,Q')$:
\begin{align*}
J[f,g,A](\mu,P') &= \E^{Q'}\left[g(X_T,\mu_T) + \int_0^Tdt\int_A\Lambda_t(da)f(t,X_t,\mu_t,a) \right] \\
	&=\lim_{k\rightarrow\infty}\E^{Q'}\left[g(X^{n_k}_T,\mu^{n_k}_T) + \int_0^Tdt\int_A\Lambda_t(da)f(t,X^{n_k}_t,\mu^{n_k}_t,\iota_{n_k}(a)) \right] \\
	&= \lim_{k\rightarrow\infty}J[f,g,A_{n_k}](\mu^{n_k},P'_{n_k}).
\end{align*}
\end{proof}

\begin{proof}[Proof of Theorem \ref{th:relaxedexistence}]
Fix $P' \in \RC[b,\sigma,A]$. Find $P'_n$ as in Lemma \ref{le:limp'}. Optimality of $P_n$ for each $n$ imlies that
\[
J[f_n,g_n,A_n](\mu^n,P'_n) \le J[f_n,g_n,A_n](\mu^n,P_n).
\]
Use Lemma \ref{le:limp'} and the upper semicontinuity of $J$ (see Lemma \ref{le:Jcontinuous}) to get
\begin{align*}
J[f,g,A](\mu,P)	&\ge \limsup_{k\rightarrow\infty}J[f_{n_k},g_{n_k},A_{n_k}](\mu^{n_k},P_{n_k}) \\
	&\ge \lim_{k\rightarrow\infty}J[f_{n_k},g_{n_k},A_{n_k}](\mu^{n_k},P'_{n_k}) \\
	&= J[f,g,A](\mu,P').
\end{align*}
Since $P'$ was arbitrary, this implies that $P$ is optimal, or $P \in \RC^*[b,\sigma,f,g,A](\mu)$. Since also $P = P \circ X^{-1}$ by Lemma \ref{le:limitpoint}, it follows that $P$ is a relaxed MFG solution.
\end{proof}

\section{The elliptic case} \label{se:elliptic}
In this section, we see how to refine the results when the volatility is uncontrolled and uniformly nondegenerate. Notably, this allows us to relax the requirement that $b$ and $f$ are continuous in $x$ to mere measurability, and we may weaken somewhat the continuity requirement regarding the measure argument as well. We shall not overcomplicate the discussion by seeking the sharpest possible assumptions; instead, we build on the old but well known results of Stroock and Varadhan \cite{stroockvaradhanbook}. Define 
\begin{align*}
\P^p_L(\R^d) &:= \left\{\mu \in \P^p(\R^d) : \mu \ll \text{Lebesgue}\right\}, \\
\P^p_L(\C^d) &:= \left\{\mu \in \P^p(\C^d) : \mu_t \ll \text{Lebesgue}, \ \forall t \in (0,T]\right\}.
\end{align*}
The nondegeneracy of the volatility will ensure that the law of the solution of the control state equation will always lie in $\P^p_L(\C^d)$; note that we exclude $t=0$ in the definition of $\P^p_L(\C^d)$, to account for initial distributions which are not absolutely continuous.
We now assume the data are of the following form:
\begin{align*}
b &: [0,T] \times \R^d \times \P^p_L(\R^d) \times A \rightarrow \R^d, \\ 
\sigma &: [0,T] \times \R^d \times \P^p_L(\R^d) \rightarrow \R^{d \times m}, \\ 
f &: [0,T] \times \R^d \times \P^p_L(\R^d) \times A \rightarrow \R, \\ 
g &: \R^d \times \P^p_L(\R^d) \rightarrow \R.
\end{align*}
For each $r > 0$, let $B_r$ denote the centered closed ball of radius $r$ in $\R^d$. We work under the following assumptions:

\begin{assumption}{\textbf{(C)}} \label{assumption:C} {\ }
\begin{enumerate}
\item[(C.1)] The functions $b$, $\sigma$, $f$, and $g$ are jointly measurable. Moreover, $g=g(x,\mu)$ and $\sigma=\sigma(t,x,\mu)$ are continuous in $(x,\mu)$, uniformly in $t$. For each $r > 0$, the functions $b=b(t,x,\mu,a)$ and $f=f(t,x,\mu,a)$ and are continuous in $(\mu,a)$, uniformly in $(t,x) \in [0,T] \times B_r$, in the sense that
\[
\lim_{n\rightarrow\infty}\sup_{(t,x) \in [0,T] \times B_r}\left|(b,f)(t,x,\mu_n,a_n) - (b,f)(t,x,\mu,a)\right| = 0, \ \forall r > 0,
\]
whenever $(\mu_n,a_n) \rightarrow (\mu,a)$ in $\P^p_L(\R^d) \times A$.
\item[(C.2)] There exist $c_1 > 0$ such that, for all $(t,\mu,x) \in [0,T] \times \P^p_L(\R^d) \times \R^d$,
\begin{align*}
\sigma\sigma^\top(t,x,\mu) &\ge 1/c_1, \\
|b(t,x,\mu,a)| &\le c_1\left[1 + |x| + \left(\int_{\R^d}|z|^p\mu(dz)\right)^{1/p}\right], \\
|\sigma\sigma^\top(t,x,\mu)| &\le c_1\left[1 + |x|^2 + \left(\int_{\R^d}|z|^p\mu(dz)\right)^{2/p}\right]
\end{align*}
\item[(C.3)] There exists $c_2 > 0$ such that, for each $(t,x,\mu,a) \in [0,T] \times \R^d \times \P^p_L(\R^d) \times A$, 
\begin{align*}
|g(x,\mu)| &\le c_2\left(1 + |x|^p + |\mu|^p\right), \\
|f(t,x,\mu,a)| &\le c_2\left(1 + |x|^p + |\mu|^p\right).
\end{align*}
\item[(C.4)] The control space $A$ is a compact metric space.
\item[(C.5)] The initial distribution $\lambda$ is in $\P^{p'}(\R^d)$, and the exponents satisfy $p' > p \ge 2$.
\end{enumerate}
\end{assumption}

Notice that $b$ and $f$ need not be continuous in $x$, and the data only needs to be defined and continuous on $\P^p_L(\R^d)$, not all of $\P^p(\R^d)$. For example, this allows for rank-dependent data, such as $f(t,x,\mu,a) = \tilde{f}(t,\mu(-\infty,x],a)$. Rank-dependence poses a threat only to the assumption (C.1), but the uniform continuity can be checked easily using a well known theorem of P\'olya, which says that if $\mu_n \rightarrow \mu$ weakly with $\mu \in \P_L(\R)$, then $\mu_n(-\infty,x] \rightarrow \mu(-\infty,x]$ uniformly in $x$. For data depending on more general functionals of the form $(x,\mu) \mapsto \int\phi(x,y)\mu(dy)$, where $\phi$ is discontinuous, uniform weak convergence results as in \cite{rao-weakuniformconvergence} are useful for checking the uniform continuity assumption (C.1). 

Since $\sigma$ does not depend on the control, Proposition \ref{pr:sderepresentation} now takes a simpler form:

\begin{proposition} \label{pr:sderepresentation-simpler}
For $\mu \in \P^p_L(\C^d)$, $\RC(\mu)$ is precisely the set of laws $P' \circ (\Lambda,X)^{-1}$, where:
\begin{enumerate}
\item $(\Omega',\F'_t,P')$ is a filtered probability space supporting a $d$-dimensional $\F'_t$-adapted process $X$, an $m$-dimensional $\F'_t$-Wiener process $W$, and an $\F'_t$-predictable $\P(A)$-valued process $\Lambda$.
\item $P' \circ X_0^{-1} = \lambda$.
\item The state equation holds:
\begin{align}
dX_t &= \int_Ab(t,X_t,\mu_t,a)\Lambda_t(da)dt + \sigma(t,X_t,\mu_t)dW_t. \label{def:SDE-simpler}
\end{align}
\end{enumerate}
\end{proposition}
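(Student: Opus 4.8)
The plan is to deduce this specialization from the general martingale-measure representation of Proposition \ref{pr:sderepresentation}, whose equivalence with the martingale-problem definition of $\RC(\mu)$ we take as given. The initial-law condition (2) here matches the corresponding condition in the definition of $\RC(\mu)$, and the moment bound $\E^{P'}\int_0^T|\Lambda_t|^p\,dt<\infty$ appearing there is automatic once $A$ is compact; thus the only new content is that, when $\sigma$ carries no control dependence, the martingale-measure state equation \eqref{def:SDE} and the Brownian state equation \eqref{def:SDE-simpler} generate exactly the same joint laws of $(\Lambda,X)$. I would establish this by two inclusions.

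For the inclusion of $\RC(\mu)$ into the set of laws arising from \eqref{def:SDE-simpler}, take $P\in\RC(\mu)$ and realize it via Proposition \ref{pr:sderepresentation} on a space $(\Omega',\F'_t,P')$ carrying orthogonal $\F'_t$-martingale measures $N=(N^1,\dots,N^m)$ on $A\times[0,T]$, each of intensity $\Lambda_t(da)\,dt$. Define the $m$-dimensional process $W_t:=\big(N^1([0,t]\times A),\dots,N^m([0,t]\times A)\big)$. Each $W^i$ is a continuous $\F'_t$-martingale starting at $0$, and the orthogonality relations together with $\Lambda_s(A)=1$ give $\langle W^i,W^j\rangle_t=\delta_{ij}\int_0^t\Lambda_s(A)\,ds=\delta_{ij}t$. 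By L\'evy's characterization, $W$ is an $m$-dimensional $\F'_t$-Brownian motion, living on the original space with no enlargement required. Because $\sigma=\sigma(t,X_t,\mu_t)$ carries no $a$-dependence, it factors out of the integral against $N$, so that $\int_A\sigma(t,X_t,\mu_t)\,N(da,dt)=\sigma(t,X_t,\mu_t)\,dW_t$ and \eqref{def:SDE} reduces to \eqref{def:SDE-simpler}, with the law of $(\Lambda,X)$ unchanged.

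For the reverse inclusion, take a weak solution of \eqref{def:SDE-simpler} on $(\Omega',\F'_t,P')$ with predictable $\P(A)$-valued $\Lambda$, and set $P:=P'\circ(\Lambda,X)^{-1}$. Applying It\^o's formula to $\phi(X_t)$ for $\phi\in C^\infty_0(\R^d)$, and using that the finite-variation part is $\int_0^t\int_Ab(s,X_s,\mu_s,a)\Lambda_s(da)\,ds$ while the quadratic-variation rate is $\sigma\sigma^\top(s,X_s,\mu_s)$, one finds that $M^{\mu,\phi}$ is a continuous local martingale. Since $\phi$ has compact support, $D\phi$ and $D^2\phi$ are bounded and vanish off a compact set, so the drift integrand is dominated by an integrable function via the growth bound (C.2) and $\mu\in\P^p_L(\C^d)$, while the bracket of the stochastic part is bounded; this upgrades $M^{\mu,\phi}$ to a genuine $P$-martingale. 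Hence $P\in\RC(\mu)$, and the two sets coincide.

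I expect the only delicate point to be the first inclusion: correctly reading off the quadratic covariation of the integrated martingale measures from their intensity and orthogonality, and confirming that $W$ is adapted to the given filtration, so that L\'evy's theorem applies without enlarging the space. Everything else---factoring the control-free $\sigma$ out of the stochastic integral, and the It\^o bookkeeping in the reverse inclusion---is routine and parallels the arguments underlying Proposition \ref{pr:sderepresentation} itself.
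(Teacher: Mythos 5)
Your proof is correct and follows the paper's (implicit) reasoning: the paper states this proposition without proof, presenting it as the uncontrolled-volatility specialization of Proposition \ref{pr:sderepresentation}, having already remarked after that proposition that one may replace $N(da,dt)$ by $dW_t$ when $\sigma$ is uncontrolled. Your two inclusions---collapsing the martingale measures to $W_t := \bigl(N^1([0,t]\times A),\ldots,N^m([0,t]\times A)\bigr)$ via orthogonality, the unit total mass of $\Lambda_t$, and L\'evy's characterization, and then the It\^o/localization check (using compact support of $\phi$ and the growth bound (C.2)) for the reverse direction---simply supply the details of exactly that argument.
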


The goal of this section is to establish the following theorem:

\begin{theorem} \label{th:ellipticexistence}
Under assumption \ref{assumption:C}, there exists a relaxed Markovian MFG solution. If also \ref{assumption:convex} holds, then there exists a strict Markovian MFG solution.
\end{theorem}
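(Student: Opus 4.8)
The plan is to follow the two-stage architecture of Sections~\ref{se:boundedcoefficients} and~\ref{se:unboundedcoefficients}: first establish existence of a relaxed MFG solution under an auxiliary boundedness assumption (with $A$ already compact by (C.4) and $\sigma\sigma^\top$ uniformly nondegenerate), then remove boundedness by the truncation-and-limit argument of Section~\ref{se:unboundedcoefficients}, and finally upgrade to a Markovian solution. The last step is essentially free: by Remark~\ref{re:markovselection2}, Theorem~\ref{th:markovselection} requires only measurability of $(b,\sigma,f)$ in $(x,\mu)$ together with continuity in $a$, which (C.1) and compactness of $A$ supply ($\sigma$ being independent of $a$). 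So once a relaxed MFG solution is in hand, the construction of Theorem~\ref{th:markovselection} and the argument of Corollary~\ref{co:strictmarkovexistence} produce a relaxed Markovian solution, and a strict Markovian one when \ref{assumption:convex} also holds. Since $A$ is compact the control moments are automatically bounded, so the coercivity played by the $-c_3|a|^{p'}$ term in Section~\ref{se:unboundedcoefficients} is vacuous here and the uniform estimates analogous to Lemma~\ref{le:tight+moments} are immediate.

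For the bounded case I would reproduce the Kakutani--Fan--Glicksberg argument of the proof of Theorem~\ref{th:boundedexistence}, but over a compact convex subset of $\P^p_L(\C^d)$ rather than $\P^p(\C^d)$. Two features must be re-examined because $b$ and $f$ are now merely measurable in $x$. First, one must check that the fixed-point map $F(\mu) = \{P\circ X^{-1} : P \in \RC^*(\mu)\}$ takes values in $\P^p_L(\C^d)$; this is exactly where nondegeneracy enters. For bounded measurable drift and bounded continuous uniformly nondegenerate $\sigma$, the martingale problem is well posed by Stroock and Varadhan \cite{stroockvaradhanbook}, and the time-$t$ law is absolutely continuous for every $t>0$, with density estimates uniform over all admissible relaxed drifts. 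Thus the natural domain is the intersection of a Stroock--Varadhan submartingale set $\Q$ (carrying a $p'$-moment bound) with $\P^p_L(\C^d)$, which stays compact and convex and is mapped into itself by $F$.

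The main obstacle is the analogue of Lemmas~\ref{le:rccontinuous} and~\ref{le:Jcontinuous}: upper hemicontinuity of $\RC$ and upper semicontinuity of $J$. The proofs there relied on joint continuity of $(\mu,q,x)\mapsto M^{\mu,\phi}_t(q,x)$ and of $\Gamma^\mu$, which fails once $b$ and $f$ are discontinuous in $x$. The resolution is to replace weak convergence of integrands by convergence of the (absolutely continuous) laws against which they are integrated, exploiting nondegeneracy a second time. Along $\mu^n\to\mu$, $P_n\in\RC(\mu^n)$, $P_n\to P$, the problematic terms are time integrals of the form $\E^{P_n}[\int_s^t\int_A b(r,X_r,\mu^n_r,a)^\top D\phi(X_r)\,\Lambda_r(da)\,dr\,h]$ (and the corresponding $f$-term in $J$), where $D\phi$ has compact support. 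I would split each integrand as $[b(r,\cdot,\mu^n_r,a)-b(r,\cdot,\mu_r,a)] + b(r,\cdot,\mu_r,a)$: the first piece vanishes uniformly in $x$ on the support of $D\phi$ by the uniform-in-$x$ continuity in $(\mu,a)$ of (C.1), while the second piece, now a \emph{fixed} discontinuous-in-$x$ function, is handled by the uniform density estimates (e.g.\ via a Krylov-type bound applied, for the path-dependent weight $h$, conditionally on $\F_s$), so that $\E^{P_n}[\cdots]\to\E^{P}[\cdots]$ in spite of the discontinuity. The same mechanism yields upper semicontinuity of $J$ using the bound on $|f|$ in (C.3).

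With these continuity properties, $\RC^*$ is upper hemicontinuous with nonempty compact convex values (Berge's theorem and linearity of $P\mapsto J(\mu,P)$), and the Kakutani--Fan--Glicksberg theorem yields a fixed point $\mu\in\P^p_L(\C^d)$ of $F$, i.e.\ a relaxed MFG solution for the bounded data. The truncation argument of Section~\ref{se:unboundedcoefficients}, with the approximating fixed points $\mu^n$ remaining in $\P^p_L(\C^d)$ by the uniform density estimates, then removes boundedness, and the Markov selection described in the first paragraph completes the proof. I expect the genuinely delicate point throughout to be the \emph{uniform} density control: one must ensure the Stroock--Varadhan/Krylov estimates hold uniformly over the truncated coefficients $(b_n,\sigma_n)$ and over the varying measure arguments $\mu^n$, which is precisely what uniform nondegeneracy ($\sigma\sigma^\top\ge 1/c_1$) together with the quadratic growth bound on $\sigma\sigma^\top$ in (C.2) are meant to guarantee.
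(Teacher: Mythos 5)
Your overall skeleton (a relaxed solution first, then the Markovian upgrade via Theorem \ref{th:markovselection} and Remark \ref{re:markovselection2}) matches the paper, and your Krylov-estimate mechanism is the right tool for passing the martingale property and the objective functional to the limit when $b$ and $f$ are merely measurable in $x$. But there is a genuine gap at the heart of your fixed-point stage: Berge's maximum theorem, which is what converts continuity of $\RC$ and $J$ into upper hemicontinuity of $\RC^*$, requires \emph{lower} hemicontinuity of the constraint correspondence $\mu \mapsto \RC(\mu)$, not only the closed-graph direction you discuss. Concretely, to show that a weak limit $P$ of optimizers $P_n \in \RC^*(\mu^n)$ is itself optimal for $\mu$, you must approximate an \emph{arbitrary} competitor $P' \in \RC(\mu)$ by competitors $P'_n \in \RC(\mu^n)$ with $J(\mu^n,P'_n) \to J(\mu,P')$. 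Under Assumption \ref{assumption:A} this is exactly the lower-hemicontinuity half of Lemma \ref{le:rccontinuous}, proved by re-solving the state SDE \emph{strongly} with the same control and noise but with $\mu^n$ in place of $\mu$ --- an argument that leans on the Lipschitz continuity of $b,\sigma$ in $x$. Under Assumption \ref{assumption:C} this route is closed: the competitor's relaxed control is path-dependent, so the relevant SDE/martingale problem has \emph{random}, merely measurable coefficients, for which neither strong solvability nor uniqueness/stability is available. Krylov bounds control expectations of discontinuous integrands along a weakly convergent sequence, but they do not manufacture the approximating measures $P'_n$. This is precisely the obstruction the paper flags in Section \ref{se:extensions} (``the difficulty \ldots will be ensuring some kind of uniqueness of the martingale problems, to permit lower hemicontinuity of the set-valued map $\mu \mapsto \RC[b,\sigma,A](\mu)$''), and it is why the paper does not reproduce the Kakutani argument at all in the elliptic setting.

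What the paper does instead --- and what your plan would in effect have to reconstruct --- is to confine the entire limiting analysis to \emph{Markovian} relaxed controls, for which uniform ellipticity restores well-posedness: it mollifies the data in $x$ and in the measure argument so that Assumption \ref{assumption:A} holds outright (no separate boundedness/truncation stage is needed), invokes Theorem \ref{th:relaxedexistence} for the mollified data, and passes to the limit of the resulting Markovian solutions $(\mu^n,\hat{q}_n)$ inside the \emph{compact} space $\M$ of Markovian controls, replacing Berge's theorem by Stroock--Varadhan stability of well-posed martingale problems (\cite[Theorem 11.3.4 and Lemma 11.3.2]{stroockvaradhanbook}, via Lemma \ref{le:ellipticL1}). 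The competitor-approximation step is then handled by first Markovian-izing the competitor through Theorem \ref{th:markovselection} (which costs nothing in value) and then defining $P'_n := \widetilde{Q}_n[q_0]$ by well-posedness of the Markovian martingale problem for the \emph{same} control $q_0$, with convergence supplied by Proposition \ref{pr:Qcontinuous}. Two secondary problems with your staging point the same way: $\P^p_L(\C^d)$ is not closed in $\P^p(\C^d)$, so your ``compact convex subset of $\P^p_L(\C^d)$'' must be carved out by uniform Krylov-type bounds rather than by intersection; and the truncation argument you import from Section \ref{se:unboundedcoefficients} would itself break, since Lemmas \ref{le:limitpoint} and \ref{le:limp'} both rely on continuity of the coefficients in $x$ and on Lipschitz solvability.
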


Theorem \ref{th:markovselection} holds in this setting, as the proof did not use continuity (see Remark \ref{re:markovselection2}). Hence, we need to prove only that there exists a relaxed MFG solution under assumption \ref{assumption:C}. This could perhaps be done from the ground up, following the fixed point argument of \ref{se:boundedcoefficients}, but it seems simpler to take advantage of our previous existence theorem. The key ideas are to work \emph{only with Markovian controls} and to approximate the data $(b,\sigma,f,g)$ in an appropriate sense by a sequence of data $(b_n,\sigma_n,f_n,g_n)$, each of which satisfies assumption \ref{assumption:A}. Fix from now on a function $\psi \in C^\infty_0(\R^d)$ supported in the closed unit ball $B_1$ satisfying $\psi \ge 0$ and $\int\psi(x)dx=1$. Define $\psi_n(x) := n^d\psi(nx)$. Given $\mu \in \P(\R^d)$ and $\phi \in C^\infty_0(\R^d)$, define the convolution $\phi * \mu \in \P(\R^d)$ by
\[
(\phi * \mu)(dx) := \int_{\R^d}\phi(x-y)\mu(dy)dx.
\]
Define the data $(b_n,\sigma_n,f_n,g_n)$ as follows:
\begin{align*}
b_n(t,x,\mu,a) &:= \int_{\R^d}\psi_n(x-y)b(t,y,\psi_n * \mu,a)dy, \\
f_n(t,x,\mu,a) &:= \int_{\R^d}\psi_n(x-y)f(t,y,\psi_n * \mu,a)dy, \\
\sigma_n(t,x,\mu) &:= \int_{\R^d}\psi_n(x-y)\sigma(t,y,\psi_n * \mu)dy, \\
g(x,\mu) &:= g(x,\psi_n * \mu).
\end{align*}
Note that $(b_n,\sigma_n,f_n,g_n)$ are defined on all of $\P^p(\R^d)$, and not just $\P^p_L(\R^d)$. Moreover, $b_n$ and $\sigma_n$ are Lipschitz in $x$, uniformly in $(t,\mu,a)$, and in fact $(b_n,\sigma_n,f_n,g_n,A)$ satisfy assumptions \ref{assumption:A} and \ref{assumption:B}. For each $n$, by Theorem \ref{th:relaxedexistence} (or \ref{th:boundedexistence}), there exists a relaxed Markovian MFG solution corresponding to the data $(b_n,\sigma_n,f_n,g_n)$. That is, there exist $\mu^n \in \P^p(\C^d)$ and $P_n \in \RC^*[b_n,\sigma_n,f_n,g_n,A](\mu^n)$ such that $P_n \circ X^{-1} = \mu^n$. Moreover, there exists a measurable function $\hat{q}_n : [0,T] \times \R^d \rightarrow \P(A)$ such that
\begin{align}
P^n = \mu^n \circ \left(dt[\hat{q}_n(t,X_t)](da),X\right)^{-1}. \label{pf:elliptic1}
\end{align}

\subsection{Relative compactness of the approximations}
Analogously to Theorem \ref{th:relaxedexistence}, Theorem \ref{th:ellipticexistence} is proven by showing that $\mu^n$ are relatively compact and that in a sense each limit point gives rise to a MFG solution.

\begin{lemma} \label{le:elliptictight}
$\mu^n$ are relatively compact in $\P^p(\C^d)$, and each limit point is in $\P^p_L(\C^d)$.
\end{lemma}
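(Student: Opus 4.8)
The plan is to follow the template of Lemma~\ref{le:tight+moments} for the compactness assertion, and to isolate a separate, genuinely new argument for the absolute continuity of limit points, where the nondegeneracy of $\sigma$ must enter.

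First I would establish the uniform moment bound $\sup_n\|\mu^n\|_T^{p'} < \infty$. The essential simplification relative to Section~\ref{se:unboundedcoefficients} is that $A$ is compact (assumption (C.4)), so that $|\Lambda_t| = |\int_A a\,\Lambda_t(da)| \le R := \sup_{a \in A}|a| < \infty$ and hence $\E^{P_n}\int_0^T|\Lambda_t|^{p'}\,dt \le TR^{p'}$, with no need to control moments of the control through the objective. Before invoking the estimate I would check that the mollified coefficients inherit the growth bounds of (C.2) with a constant \emph{independent of} $n$: since $\psi_n$ is a probability density supported in $B_{1/n}$, one has $|y| \le |x|+1$ on its support and $\int_{\R^d}|z|^p(\psi_n * \mu)(dz) \le C_p(1 + \int_{\R^d}|z|^p\mu(dz))$, so convolution only inflates $c_1$ by a universal factor. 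Applying the second conclusion of Lemma~\ref{le:stateestimate} (whose proof uses only the growth bounds) with $\gamma = p'$ then gives $\|\mu^n\|_T^{p'} = \E^{P_n}\|X\|_T^{p'} \le c_4(1 + TR^{p'})$, uniformly in $n$. Relative compactness in $\P^p(\C^d)$ follows exactly as before: tightness in $\P(\C^d)$ from Aldous' criterion (Proposition~\ref{pr:itocompact}) using the uniform growth of $(b_n,\sigma_n)$ and the moment bound, upgraded to the $p$-Wasserstein topology because $p' > p$ forces uniform integrability of $\|X\|_T^p$.

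The substantive part is that a limit $\mu = \lim_k \mu^{n_k}$ lies in $\P^p_L(\C^d)$, i.e. $\mu_t \ll \mathrm{Leb}$ for each $t \in (0,T]$. Since $\mu_t$ is finite and absolute continuity on every ball yields global absolute continuity, I would fix $B_R$ and show $\mu_t|_{B_R}$ has a locally bounded density. The key enabling step is a \emph{local} recovery of ellipticity: although mollification need not preserve the uniform lower bound of (C.2), the continuity of $\sigma$ in $(x,\mu)$ uniform in $t$, together with $\sigma_n(t,x,\mu) = \int\psi_n(x-y)\sigma(t,y,\psi_n * \mu)\,dy$ and $\psi_n * \mu^n_t \to \mu^n_t$, shows that $\sigma_n\sigma_n^\top(t,x,\mu^n_t)$ converges to $\sigma\sigma^\top \ge \frac{1}{c_1}I$ uniformly for $x \in B_R$ (the measures $\psi_n * \mu^n_t$ ranging over a Wasserstein-relatively-compact set, by the moment bound). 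Hence, discarding finitely many $n$, one has $\sigma_n\sigma_n^\top \ge \frac{1}{2c_1}I$ on $[0,T] \times B_R$ together with a uniform upper bound on $|b_n|,|\sigma_n|$ there. With this local uniform ellipticity and boundedness, interior density estimates for nondegenerate diffusions (Stroock and Varadhan \cite{stroockvaradhanbook}, via the Fokker--Planck/adjoint equation, or equivalently Aronson-type heat-kernel bounds) provide, for each fixed $t \in (0,T]$, a bound $\sup_k\|\rho^{n_k}_t\|_{L^\infty(B_{R/2})} \le C(R,t) < \infty$ on the marginal densities $\rho^n_t$ of $\mu^n_t$, with $C(R,t)$ independent of $k$. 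Passing to the limit via the portmanteau theorem (for open $U \subseteq B_{R/2}$, $\mu_t(U) \le \liminf_k \mu^{n_k}_t(U) \le C(R,t)\,\mathrm{Leb}(U)$) and outer regularity gives $\mu_t(B) \le C(R,t)\,\mathrm{Leb}(B)$ for Borel $B \subseteq B_{R/2}$, so $\mu_t|_{B_{R/2}} \ll \mathrm{Leb}$; letting $R \to \infty$ finishes the argument.

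I expect this last step to be the main obstacle. Everything upstream is a routine adaptation of Section~\ref{se:unboundedcoefficients}, simplified by the compactness of $A$. It is only the nondegeneracy-driven absolute continuity that requires care, and within it the two uniformity issues: securing $\sigma_n\sigma_n^\top \ge \frac{1}{2c_1}I$ uniformly in $n$ on balls (mollification a priori degrading the ellipticity constant), and then obtaining interior density estimates whose constants are independent of $n$ so that they survive the weak limit. If one is content with $\mu_t \ll \mathrm{Leb}$ for almost every $t$ rather than all $t$, a softer route via Krylov's estimate for the space-time occupation measure is available, but the stated conclusion for every $t \in (0,T]$ seems to genuinely require the fixed-time density bounds above.
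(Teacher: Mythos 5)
Your treatment of the compactness half is essentially the paper's: the uniform bound $\sup_n\|\mu^n\|_T^{p'}<\infty$ from Lemma \ref{le:stateestimate} followed by Proposition \ref{pr:itocompact}, and the simplification you note is real --- in fact (C.2) bounds $b$ and $\sigma\sigma^\top$ with no dependence on $a$ at all, so no moment of the control is needed (this also sidesteps the fact that under (C.4) the space $A$ is only a compact metric space, so the quantity $|a|$ in your bound on $|\Lambda_t|$ is not even defined). The genuine gap is in the second half, exactly where you predicted the difficulty would lie: the claimed uniform interior bound $\sup_k\|\rho^{n_k}_t\|_{L^\infty(B_{R/2})}\le C(R,t)$ is not available for this class of operators. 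The generator of $\mu^n$ is in \emph{non-divergence} form, with diffusion matrix $c^n(t,x)=\sigma_n\sigma_n^\top(t,x,\mu^n_t)$ that is only \emph{continuous} in $x$ and drift $b^n(t,x)=\int_A b_n(t,x,\mu^n_t,a)[\hat q_n(t,x)](da)$ that is only measurable. Aronson-type heat-kernel bounds apply to divergence-form operators and cannot be invoked without differentiating $c^n$; and for non-divergence operators, pointwise bounds on fixed-time densities require quantitative regularity of the diffusion coefficient (H\"older, or Dini-type conditions, via parametrix constructions) and can fail under mere continuity. What continuity buys --- and this is the content of the Stroock--Varadhan estimates behind \cite[Corollary 9.1.10]{stroockvaradhanbook} --- is only an integrability bound on the densities (some $L^q$ with $q>1$ close to $1$), proved perturbatively off constant coefficients, with constants depending on the \emph{modulus of continuity} of the diffusion matrix. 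So even a repaired version of your scheme would need the equicontinuity of $\{c^n(t,\cdot): t, n\}$ (which does hold, by (C.1) and relative compactness of $\{\psi_n*\mu^n_t\}$) to make those constants uniform in $n$ --- a uniformity issue your proposal flags in spirit but does not resolve, and which your cited tools cannot resolve at the $L^\infty$ level.

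The paper's proof avoids uniform-in-$n$ density estimates altogether, and the device is worth internalizing. It extracts a weak $L^2_{\text{loc}}$ limit $(\widetilde b,\widetilde c)$ of the Markovian coefficients $(b^n,c^n)$ along a subsequence, observes that $\widetilde c\ge 1/c_1$ and that $\widetilde c(t,\cdot)$ may be taken continuous uniformly in $t$ (Arzel\`a--Ascoli, using exactly the equicontinuity above), concludes that the martingale problem for $(\widetilde b,\widetilde c)$ is \emph{well-posed}, identifies the limit of $\mu^{n_k}$ as its unique solution via \cite[Theorem 11.3.4]{stroockvaradhanbook}, and only then applies the absolute-continuity result \cite[Corollary 9.1.10]{stroockvaradhanbook} a single time, to the limiting measure itself; no quantitative estimate has to survive the passage to the limit. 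Your fallback remark --- that Krylov's estimate yields absolute continuity of $\mu_t$ for almost every $t$ --- is correct but, as you acknowledge, falls short of the definition of $\P^p_L(\C^d)$. If you wish to salvage your route, replace the $L^\infty$ bounds by the uniform $L^q$ bounds just described and use H\"older's inequality in your portmanteau step; but at that point you would be re-proving, in a harder uniform-in-$n$ form, precisely the result that the paper applies once to the limit.
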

\begin{proof}
By Lemma \ref{le:stateestimate}, we have (as in Lemma \ref{le:tight+moments})
\begin{align}
\sup_n\E^{P_n}\|X\|^{p'}_T = \sup_n\|\mu^n\|^{p'}_T < \infty. \label{pf:elliptictight1}
\end{align}
Hence, $P_n$ are relatively compact in $\P^p(\Omega)$, by Proposition \ref{pr:itocompact} and so $\mu^n$ are relatively compact in $\P^p(\C^d)$. Define
\begin{align*}
b^n(t,x) &:= \int_Ab_n(t,x,\mu^n_t,a)[\hat{q}_n(t,x)](da), \\
c^n(t,x) &:= \sigma_n\sigma_n^\top(t,x,\mu^n_t).
\end{align*}
Assumption (C.2) implies that $b^n$ and $c^n$ are locally uniformly bounded, in the sense that 
\[
\sup_n\sup_{(t,x) \in [0,T] \times B_r}|b^n(t,x)| + |c^n(t,x)| < \infty, \text{ for each } r > 0.
\]
Therefore the sequence $(b^n,c^n)$ admits a weak limit in $L^2_\text{loc}$; in particular, we may find a subsequence $n_k$ and functions $\widetilde{b} : [0,T] \times \R^d \rightarrow \R^d$ and $\widetilde{c} : [0,T] \times \R^d \rightarrow \R^{d \times d}$ such that
\begin{align*}
\lim_{k\rightarrow\infty}\int_0^T\int_{\R^d}b^{n_k}_i(t,x)\phi(t,x)dxdt &:= \int_0^T\int_{\R^d}\widetilde{b}_i(t,x)\phi(t,x)dxdt, \\
\lim_{k\rightarrow\infty}\int_0^T\int_{\R^d}c^{n_k}_{i,j}(t,x)\phi(t,x)dxdt &:= \int_0^T\int_{\R^d}\widetilde{c}_{i,j}(t,x)\phi(t,x)dxdt,
\end{align*}
for each $i,j=1,\ldots,d$ and $\phi \in C^\infty_0([0,T] \times \R^d)$. The functions $\widetilde{b}$ and $\widetilde{c}$ necessarily satisfy the same local bounds as $b^n$ and $c^n$, and also
\[
\widetilde{c}(t,x) \ge 1/c_1, \ \forall (t,x) \in [0,T] \times \R^d.
\]
Since $\{\mu^n_t : t \in [0,T], \ n \ge 1\}$ is relatively compact, the functions $\{c^n(t,\cdot) : t \in [0,T], \ n \ge 1\}$ are equicontinuous by assumption (C.1). We may thus assume $\widetilde{c}(t,\cdot)$ is continuous, uniformly in $t$, by the Arzel\`a-Ascoli theorem. It follows that the martingale problem corresponding to $(\widetilde{b},\widetilde{c})$ is well-posed, and from \cite[Theorem 11.3.4]{stroockvaradhanbook} we conclude that $\mu^{n_k}$ converges to the unique probability measure $\mu$ on $\C^d$ such that $\mu \circ X_0^{-1} = \lambda$ and such that
\[
\phi(X_t) - \int_0^t\left(\widetilde{b}(s,X_s)^\top D\phi(X_s) + \frac{1}{2}\text{Tr}\left[\widetilde{c}(s,X_s)D^2\phi(X_s)\right]\right)ds
\]
is a $\mu$-martingale for each $\phi \in C^\infty_0(\R^d)$. It is shown in \cite[Corollary 9.1.10]{stroockvaradhanbook} that $\mu_t$ admits a density for each $t > 0$.
\end{proof}

\subsection{Relaxed Markovian controls}
From now on, fix a limit point $\mu \in \P^p(\C^d)$ of $\mu^n$, and we will abuse notation by assuming $\mu^n \rightarrow \mu$ itself. Consider the set $\M^0$ of \emph{Markovian controls}, defined to be the set of all measurable functions from $[0,T] \times \R^d$ to $\P(A)$. Let $L_n = L[b_n,\sigma_n,A]$ denote the generator associate to $(b_n,\sigma_n,A)$. Given $q \in \M^0$ and $\phi \in C^\infty_0(\R^d)$, define $M_t[q,\phi] : \C^d \rightarrow \R$ and $M^n_t[q,\phi] : \C^d \rightarrow \R$ by
\begin{align*}
M_t[q,\phi](x) := \phi(x_t) - \int_0^tds\int_A[q(s,x_s)](da)L\phi(s,x_s,\mu_s,a), \\
M^n_t[q,\phi](x) := \phi(x_t) - \int_0^tds\int_A[q(s,x_s)](da)L_n\phi(s,x_s,\mu^n_s,a).
\end{align*}
Let $\F^X_t$ denote the natural filtration on $\C^d$. The classical results of Stroock and Varadhan \cite{stroockvaradhanbook} ensure that each of these martingale problems are well-posed. That is, for each $q \in \M^0$, there is a unique $Q[q] \in \P^p(\C^d)$ such that $Q[q] \circ X_0^{-1} = \lambda$ and such that $M_t[q,\phi]$ is a $Q[q]$-martingale for each $\phi \in C^\infty_0(\R^d)$. Similarly, there is a unique $Q_n[q] \in \P^p(\C^d)$ such that $Q_n[q] \circ X_0^{-1} = \lambda$ and such that $M^n_t[q,\phi]$ is a $Q_n[q]$-martingale for each $\phi \in C^\infty_0(\R^d)$.
Define also
\begin{align*}
\widetilde{Q}[q] &:= Q[q] \circ \left(dt[q(t,X_t)](da),X\right)^{-1}, \\
\widetilde{Q}_n[q] &:= Q_n[q] \circ \left(dt[q(t,X_t)](da),X\right)^{-1}.
\end{align*}
Note that $\widetilde{Q}[q] \in \RC[b,\sigma,A](\mu)$ and $\widetilde{Q}_n[q] \in \RC[b_n,\sigma_n,A](\mu^n)$ for each $q \in \M^0$. In fact, it follows from \cite[Corollary 9.1.0]{stroockvaradhanbook} that the measures $Q[q]_t$ and $Q_n[q]_t$ on $\R^d$ admit densities (with respect to Lebesgue measure) for each $q \in \M^0$ and each $t > 0$. Thus, if $q=q'$ for Lebesgue-almost-every $(t,x) \in [0,T] \times \R^d$, we have $Q[q]=Q[q']$ and $Q_n[q]=Q_n[q']$. Thus, if $\M$ is defined to be the quotient space of equivalence classes of a.e. equal elements of $\M^0$, we may define $Q[q]$, $Q_n[q]$, $\widetilde{Q}[q]$, and $\widetilde{Q}_n[q]$ unambiguously for each $q \in \M$.

Fix arbitrarily some strictly positive probability density $\Phi$ on $\R^d$, e.g. a Gaussian. We may identify $q \in \M$ with the measure
\[
dtdx\Phi(x)[q(t,x)](da)
\]
on $[0,T] \times \R^d \times A$, and conversely for any measure on $[0,T] \times \R^d \times A$ with $[0,T] \times \R^d$-marginal equal to $dtdx\Phi(x)$ there exists a unique corresponding $q \in \M$, by disintegration. Thus we may topologize $\M$ by transferring the weak convergence topology from the space of measures on $[0,T] \times \R^d \times A$. This means $q_n \rightarrow q$ if and only if
\[
\lim_{n\rightarrow\infty}\int_0^Tdt\int_{\R^d}dx\Phi(x)\int_A[q_n(t,x)-q(t,x)](da)\phi(t,x,a) = 0,
\]
for all bounded continuous $\phi$. Since $A$ is a compact metric space, so is $\M$.

\subsection{Passage to the limit}
After two lemmas, we will prove the crucial Proposition \ref{pr:Qcontinuous}, explaining the convergence of $\widetilde{Q}_n[q_n]$. With this proposition in hand, the proof of Theorem \ref{th:ellipticexistence} will be straightforward.

\begin{lemma} \label{le:convolutionuniform}
If $\nu_n \rightarrow \nu$ in $\P^p(\R^d)$, then $\psi_n * \nu_n \rightarrow \nu$ in $\P^p(\R^d)$. If $\nu^n \rightarrow \nu$ in $\P^p(\C^d)$, then $\psi_n * \nu^n_t \rightarrow \nu_t$ in $\P^p(\R^d)$, uniformly in $t \in [0,T]$.
\end{lemma}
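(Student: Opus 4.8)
The plan is to reduce both claims to a single elementary estimate — that the mollifier $\psi_n$ displaces any measure by at most $1/n$ in $p$-Wasserstein distance — together with a coupling argument on path space to handle the uniformity in $t$.

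First I would record a probabilistic description of the convolution. Since $\psi_n \ge 0$, $\int \psi_n = 1$, and $\psi_n$ is supported in the ball $B_{1/n}$ (because $\psi$ is supported in $B_1$ and $\psi_n(x) = n^d\psi(nx)$), the measure $\psi_n * \mu$ is exactly the law of $Y + Z_n$, where $Y \sim \mu$ and $Z_n$ has density $\psi_n$ and is independent of $Y$; indeed the Lebesgue density of $\psi_n * \mu$ at $x$ is $\int \psi_n(x-y)\mu(dy)$, which is precisely the density of $Y + Z_n$. Because $|Z_n| \le 1/n$ almost surely, the pair $(Y+Z_n, Y)$ is a coupling of $\psi_n * \mu$ and $\mu$ witnessing
\[
d_{\R^d,p}(\psi_n * \mu, \mu) \le \left(\E|Z_n|^p\right)^{1/p} \le 1/n .
\]
The key feature is that this bound is uniform over all $\mu \in \P^p(\R^d)$.

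For the first claim, the triangle inequality gives $d_{\R^d,p}(\psi_n * \nu_n, \nu) \le d_{\R^d,p}(\psi_n * \nu_n, \nu_n) + d_{\R^d,p}(\nu_n, \nu)$, where the first term is at most $1/n$ by the displayed bound (applied with $\mu = \nu_n$) and the second tends to $0$ by hypothesis. For the second claim I would split $d_{\R^d,p}(\psi_n * \nu^n_t, \nu_t) \le d_{\R^d,p}(\psi_n * \nu^n_t, \nu^n_t) + d_{\R^d,p}(\nu^n_t, \nu_t)$, where the first term is $\le 1/n$ uniformly in $t$. The main point, and the only step requiring care, is the uniform-in-$t$ convergence of the marginals $\sup_{t}d_{\R^d,p}(\nu^n_t,\nu_t) \to 0$. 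To extract this from $\nu^n \to \nu$ in $\P^p(\C^d)$, I would fix an optimal coupling $\gamma_n \in \P(\C^d \times \C^d)$ of $(\nu^n,\nu)$, so that $\int \|x-y\|_T^p\,\gamma_n(dx,dy) = d_{\C^d,p}^p(\nu^n,\nu)$. For each $t$ the pushforward of $\gamma_n$ under $(x,y)\mapsto(x_t,y_t)$ couples $\nu^n_t$ and $\nu_t$, and since $|x_t-y_t|\le\|x-y\|_T$ we obtain
\[
\sup_{t \in [0,T]} d_{\R^d,p}^p(\nu^n_t,\nu_t) \le \int \|x-y\|_T^p\,\gamma_n(dx,dy) = d_{\C^d,p}^p(\nu^n,\nu) \to 0 .
\]
Combining the two estimates and taking the supremum over $t$ completes the argument. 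The only genuinely nontrivial idea is using one path-space coupling to dominate all time marginals simultaneously; the mollification estimate is uniform by construction, so no further work is needed there.
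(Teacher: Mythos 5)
Your proof is correct and follows essentially the same route as the paper's: the paper likewise couples $\nu$ with $\psi_n*\nu$ via independent $Y\sim\nu$ and $Z\sim\psi_n(x)dx$ to get the uniform-in-$\mu$ estimate $d^p_{\R^d,p}(\psi_n*\nu,\nu)\le\int|x|^p\psi_n(x)dx$, and it likewise dominates all time-marginal distances by the path-space distance, $\sup_t d^p_{\R^d,p}(\nu^n_t,\nu_t)\le d^p_{\C^d,p}(\nu^n,\nu)$ (stated there as "clear," which you justify by projecting an optimal coupling). The only cosmetic difference is that you bound the mollification error by $1/n$ using the support of $\psi_n$, while the paper leaves it as $\int|x|^p\psi_n(x)dx$; both are uniform in the measure, which is the point.
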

\begin{proof}
For $\nu,\eta \in \P^p(\C^d)$, it is clear that
\[
\sup_{t \in [0,T]}d^p_{\R^d}(\nu_t,\eta_t) \le d^p_{\C^d}(\nu,\eta).
\]
In particular, the function $\P^p(\C^d) \ni \nu \mapsto (\nu_t)_{t \in [0,T]} \in C([0,T];\P^p(\R^d))$ is uniformly continuous. 
Define the sequence of functions $F_n : \P^p(\R^d) \rightarrow \P^p(\R^d)$ by
\[
F_n(\nu) := \psi_n * \nu.
\]
It is well-known that $F_n$ converges pointwise to the identity. Actually, both claims follow from the simple fact that $F_n$ converges \emph{uniformly}. Indeed, to estimate $d^p_{\R^d}(F_n(\nu),\nu)$, define the following coupling of the laws $\nu$ and $\psi_n * \nu$: Construct on some probability space two independent random vectors $Y$ and $Z$ with respective laws $\nu$ and $\psi_n(x)dx$. Then $Y+Z$ has law $\psi_n * \nu$, and so
\[
d^p_{\R^d}(F_n(\nu),\nu) \le \E\left[|(Z+Y) - Y|^p\right] = \int_{\R^d}|x|^p\psi_n(x)dx.
\]
\end{proof}

\begin{lemma} \label{le:ellipticL1}
If $\nu^n \in \P^p(\C^d)$, $\nu \in \P^p_L(\C^d)$, and $\nu^n \rightarrow \nu$ in $\P^p(\C^d)$, then for each $r > 0$
\[
\lim_{n\rightarrow\infty}\int_0^Tdt\int_{B_r}dx\sup_{a \in A}\left|b_n(t,x,\nu^n_t,a) - b(t,x,\nu_t,a)\right| = 0,
\]
and an analogous results hold with $f_n$ or $\sigma_n$ in place of $b_n$.
\end{lemma}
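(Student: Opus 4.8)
The plan is to carry out the argument for $b$ in detail; the case of $f$ is word-for-word identical, and the case of $\sigma$ is strictly easier since $\sigma$ is jointly continuous in $(x,\mu)$ and carries no control argument. Write $\mu^n_t := \psi_n * \nu^n_t$, which lies in $\P^p_L(\R^d)$ because $\psi_n$ is smooth, and which satisfies $\sup_t d_{\R^d,p}(\mu^n_t,\nu_t) \to 0$ by Lemma \ref{le:convolutionuniform}. Since $\int_{\R^d}\psi_n(x-y)\,dy = 1$, I would decompose
\[
b_n(t,x,\nu^n_t,a) - b(t,x,\nu_t,a) = (I) + (II),
\]
where $(I) = \int \psi_n(x-y)[b(t,y,\mu^n_t,a)-b(t,y,\nu_t,a)]\,dy$ captures the change of the measure argument and $(II)=\int \psi_n(x-y)[b(t,y,\nu_t,a)-b(t,x,\nu_t,a)]\,dy$ captures the spatial mollification. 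A recurring technical point is that $b(t,\cdot,\nu_t,\cdot)$ is only defined for $\nu_t \in \P^p_L(\R^d)$, which holds only for $t>0$; accordingly I would fix a small $\delta_0>0$, bound the contribution of $t \in [0,\delta_0]$ crudely, and do the real work on $[\delta_0,T]$. On $[0,\delta_0]$ the growth bound (C.2), together with the uniform moment estimate $\sup_{n,t}|\mu^n_t|^p \vee \sup_t|\nu_t|^p<\infty$ (inherited from $\nu^n\to\nu$ in $\P^p$, since convolution with $\psi_n$ changes the $p$-th moment by at most $n^{-p}$), shows the integrands of both $(I)$ and $(II)$ are bounded by a constant $C_r$ on $B_{r+1}$, so this piece is at most $\delta_0|B_{r+1}|C_r$, uniformly in $n$.

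For term $(I)$, I would first use the triangle inequality for the integral together with $\sup_a\int\le\int\sup_a$, Fubini, and $\int_{B_r}\psi_n(x-y)\,dx\le 1$ (supported in $y\in B_{r+1}$) to reduce $\int_{B_r}\sup_a|(I)|\,dx$ to $\int_{B_{r+1}}\sup_a|b(t,y,\mu^n_t,a)-b(t,y,\nu_t,a)|\,dy$. On $[\delta_0,T]$ I claim this integrand tends to $0$ uniformly in $(t,y,a)$, which I would prove by contradiction and compactness of $[\delta_0,T]\times B_{r+1}\times A$: along a hypothetical bad sequence $(t_k,y_k,a_k)\to(t^\ast,y^\ast,a^\ast)$ with $t^\ast\ge\delta_0>0$, both $\nu_{t_k}$ and $\mu^{n_k}_{t_k}$ converge in $\P^p_L(\R^d)$ to $\nu_{t^\ast}$ (using that $t\mapsto\nu_t$ is continuous into $\P^p(\R^d)$ by the diagonal coupling and dominated convergence, that $\nu_{t^\ast}\in\P^p_L$ since $t^\ast>0$, and that $\sup_t d_{\R^d,p}(\mu^n_t,\nu_t)\to0$). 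Applying the uniform-continuity hypothesis (C.1) twice then forces both $b(t_k,y_k,\mu^{n_k}_{t_k},a_k)$ and $b(t_k,y_k,\nu_{t_k},a_k)$ to approximate the common value $b(t_k,y_k,\nu_{t^\ast},a^\ast)$, contradicting a positive lower bound on their difference. Integrating the resulting uniform bound over the bounded set $[\delta_0,T]\times B_{r+1}$ kills term $(I)$.

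Term $(II)$ is the \emph{main obstacle}, since $b$ is only measurable (not continuous) in $x$, so the spatial mollification cannot be controlled pointwise. The idea is to trade $\sup_a$ for a finite maximum and then invoke $L^1$-convergence of mollifiers. First, the same contradiction-plus-(C.1) argument as above (now varying only the control) shows the family $\{(t,x)\mapsto b(t,x,\nu_t,a)\}_{a\in A}$ is equicontinuous in $a$, uniformly over $(t,x)\in[\delta_0,T]\times B_{r+1}$; since $A$ is compact, for each $\epsilon>0$ there is a finite net $a_1,\dots,a_N$ so that every $a$ admits a $j$ with $\sup_{(t,x)\in[\delta_0,T]\times B_{r+1}}|b(t,x,\nu_t,a)-b(t,x,\nu_t,a_j)|<\epsilon$. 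For $x\in B_r$ the convolution only sees $y\in B_{r+1}$, so replacing $a$ by its nearest net point costs at most $2\epsilon$ and
\[
\sup_a|(II)| \le \max_{j\le N}\Big|\int\psi_n(x-y)[b(t,y,\nu_t,a_j)-b(t,x,\nu_t,a_j)]\,dy\Big| + 2\epsilon.
\]
For each fixed $(t,a_j)$ the function $b(t,\cdot,\nu_t,a_j)$ lies in $L^1(B_{r+1})$ by (C.2), so $\int_{B_r}|\psi_n * b(t,\cdot,\nu_t,a_j)-b(t,\cdot,\nu_t,a_j)|\,dx\to0$ by the standard $L^1$ convergence of mollifications; bounding this $t$-integrand uniformly by $2|B_{r+1}|C_r$ and applying dominated convergence over $[\delta_0,T]$ sends the finite sum to $0$. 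Hence $\limsup_n\int_{\delta_0}^T\!\int_{B_r}\sup_a|(II)|\,dx\,dt\le 2\epsilon(T-\delta_0)|B_r|$, and letting $\epsilon\downarrow0$ finishes term $(II)$. Combining the three estimates, sending $n\to\infty$ and then $\delta_0\downarrow0$, gives the claim for $b$; the cases of $f$ and $\sigma$ follow by the same scheme, with $\sigma$ requiring neither the net over $a$ nor the $L^1$ argument thanks to its joint continuity in $(x,\mu)$.
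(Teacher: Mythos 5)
Your proof is correct and follows essentially the same route as the paper: the same decomposition into a measure-change term and a spatial-mollification term, with the first handled by compactness plus the uniform continuity of (C.1) and the second by a finite $\epsilon$-net followed by $L^1$ convergence of mollifications and dominated convergence in $t$. The only differences are cosmetic: you cut off $t \in [0,\delta_0]$ to sidestep the fact that $\nu_0$ need not lie in $\P^p_L(\R^d)$ (a point the paper glosses over in asserting its uniform bound $C_n \to 0$ over all of $(0,T]$), and your net is over $A$ alone, whereas the paper nets over the compact set $\{\nu_t : t \in (0,T]\} \times A$ of measure-control pairs.
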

\begin{proof}
Since $t \mapsto \nu_t$ is continuous, $\{\nu_t : t \in [0,T]\}$ is compact in $\P^p(\R^d)$. 
Lemma \ref{le:convolutionuniform} implies that the set $\{\psi_n * \nu^n_t : t \in [0,T], \ n \ge 1\}$ is relatively compact in $\P^p(\R^d)$. By assumption (C.2),
\[
\sup_n\sup_{(t,x,a) \in (0,T] \times B_{r} \times A}\left(|b(t,x,\psi_n * \nu^n_t,a)| + |b(t,x,\nu_t,a)|\right) < \infty.
\]
Compactness of $A$ and assumption (C.1) imply that $b(t,x,\nu,a)$ is continuous in $\nu$, uniformly in $(t,x,a) \in [0,T] \times B_{r+1} \times A$. Along with Lemma \ref{le:convolutionuniform}, this implies
\[
C_n := \sup_{(t,y,a) \in (0,T] \times B_{r+1} \times A}\left|b(t,y,\psi_n * \nu^n_t,a) - b(t,y,\nu_t,a)\right| \rightarrow 0.
\]
Thus
\begin{align*}
\sup_{a \in A}&\left|b_n(t,x,\nu^n_t,a) - b(t,x,\nu_t,a)\right| \\
	&= \sup_{a \in A}\left|\int_{\R^d}\psi_n(x-y)b(t,y,\psi_n * \nu^n_t,a)dy - b(t,x,\nu_t,a)\right| \\
	&\le C_n + \sup_{a \in A}\left|\int_{\R^d}\psi_n(x-y)b(t,y,\nu_t,a)dy - b(t,x,\nu_t,a)\right|.
\end{align*}
It remains to show that
\begin{align}
\lim_{n\rightarrow\infty}\int_0^Tdt\int_{B_r}dx\sup_{a \in A}\left|\int_{\R^d}\psi_n(x-y)b(t,y,\nu_t,a)dy - b(t,x,\nu_t,a)\right| = 0, \label{pf:ellipticL1-1}
\end{align}
The set $K := \{\nu_t : t \in (0,T]\} \subset \P^p_L(\R^d)$ is relatively compact, and $b(t,x,\nu,a)$ is continuous in $(\nu,a)$ uniformly in $(t,x) \in [0,T] \times B_r$ by (C.1). Thus, for any $\epsilon > 0$ there exists a finite collection of elements $(\eta_m,a_m)_m$ of $K \times A$ such that for each $(\eta,a) \in K \times A$ there exists $m$ such that
\[
\sup_{(t,x) \in (0,T] \times B_{r+1}}\left|b(t,x,\eta,a) - b(t,x,\eta_m,a_m)\right| < \epsilon.
\]
Then, if $|B_r|$ denotes the Lebesgue measure of $B_r$,
\begin{align*}
\int_0^Tdt&\int_{B_r}dx\sup_{a \in A}\left|\int_{\R^d}\psi_n(x-y)b(t,y,\nu_t,a)dy - b(t,x,\nu_t,a)\right| \\
	&\le 2\epsilon T|B_r| + \sum_m\int_0^Tdt\int_{B_r}dx\left|\int_{\R^d}\psi_n(x-y)b(t,y,\eta_m,a_m)dy - b(t,x,\eta_m,a_m)\right|.
\end{align*}
Since the summation is finite, sending $n \rightarrow\infty$ and then $\epsilon \downarrow 0$ proves \eqref{pf:ellipticL1-1}.
\end{proof}

\begin{proposition} \label{pr:Qcontinuous}
If $q_n \rightarrow q $ in $\M$, then $Q_n[q_n] \rightarrow Q[q]$ in $\P^p(\C^d)$ and
\begin{align}
\lim_{n\rightarrow\infty}J[f_n,g_n](\mu^n,\widetilde{Q}_n[q_n]) = J[f,g](\mu,\widetilde{Q}[q]). \label{pf:Qcontinuous0}
\end{align}
\end{proposition}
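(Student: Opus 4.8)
The plan is to establish the two conclusions separately: the weak convergence $Q_n[q_n] \to Q[q]$ in $\P^p(\C^d)$, and then \eqref{pf:Qcontinuous0}, which will rest on the same analysis. The feature driving everything is the uniform nondegeneracy in (C.2): it makes each limiting martingale problem well-posed and forces the time-marginals of $Q_n[q_n]$ to have densities with uniform regularity, which is precisely what lets me reconcile the merely \emph{weak} convergence of the Markovian controls $q_n$ with the discontinuity of $b$ in $x$.

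For the first claim, I would first note that $A$ is compact and the coefficients $(b_n,\sigma_n)$ obey the growth bounds of (C.2) with a common constant, so Lemma \ref{le:stateestimate} yields uniform $p'$-moment bounds (as in Lemma \ref{le:elliptictight}) and Proposition \ref{pr:itocompact} gives relative compactness of $\{Q_n[q_n]\}$ in $\P^p(\C^d)$. Passing to a subsequential limit $Q^*$, the goal is to identify $Q^*$ as the (unique, by well-posedness) solution $Q[q]$ of the limiting martingale problem; it then follows that the whole sequence converges. Concretely I must pass to the limit in $\E^{Q_n[q_n]}[(M^n_t[q_n,\phi] - M^n_s[q_n,\phi])h] = 0$ to obtain $\E^{Q^*}[(M_t[q,\phi] - M_s[q,\phi])h] = 0$ for $\phi \in C^\infty_0(\R^d)$ and bounded continuous $\F^X_s$-measurable $h$. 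The diffusion part is routine: since $\sigma$ is genuinely continuous in $(x,\mu)$ (uniformly in $t$, by (C.1)) and the relevant measure arguments converge to $\mu_t$ uniformly in $t$ (Lemma \ref{le:convolutionuniform}), one has $\sigma_n\sigma_n^\top(\cdot,\cdot,\mu^n_\cdot) \to \sigma\sigma^\top(\cdot,\cdot,\mu_\cdot)$ locally uniformly, and because $D^2\phi$ is compactly supported this term converges using the weak convergence $Q_n[q_n] \to Q^*$ and the moment bounds.

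The genuine obstacle is the drift term, where $b$ is only measurable in $x$ while $q_n$ converges only weakly in $\M$. I would split the difference of effective drifts as
\begin{align*}
\int_A b_n(u,x,\mu^n_u,a)\,q_n(u,x)(da) - \int_A b(u,x,\mu_u,a)\,q(u,x)(da) = A_n(u,x) + B_n(u,x),
\end{align*}
where $A_n$ replaces $(b_n,\mu^n)$ by $(b,\mu)$ integrated against $q_n$, and $B_n(u,x) = \int_A b(u,x,\mu_u,a)\,(q_n - q)(u,x)(da)$ isolates the weak control convergence. Under (C.2) the densities $\rho_n$ of the time-$u$ marginals $Q_n[q_n]\circ X_u^{-1}$ admit uniform Gaussian upper bounds and uniform local H\"older estimates (the Stroock--Varadhan theory \cite{stroockvaradhanbook}), so along the subsequence $\rho_n \to \rho$ locally uniformly on $(0,T]\times\R^d$, where $\rho$ is the density of $Q^*\circ X_u^{-1}$; the interval $[0,\delta]$ near the origin, where $\lambda$ may be singular, contributes only $O(\delta)$ because $b$ has linear growth and the first moments are uniformly bounded. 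For $A_n$, Lemma \ref{le:ellipticL1} gives $L^1_{loc}$ convergence to zero, which together with the uniform density bounds sends $\E^{Q_n[q_n]}\int_\delta^t|A_n(u,X_u)|\,du \to 0$. For $B_n$ I would first use strong convergence $\rho_n\to\rho$ to replace $\rho_n$ by $\rho$ in $\int B_n\rho_n$, and then approximate the bounded, $(u,x)$-measurable, $a$-continuous integrand $b(u,x,\mu_u,a)^\top D\phi(x)\rho(u,x)$ by functions continuous in $(u,x,a)$, at which point the weak convergence $q_n\to q$ in $\M$ finally applies. This reconciliation, possible only because of the smoothing by the nondegenerate noise, is the crux of the argument; by well-posedness it yields $Q^* = Q[q]$.

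Finally, for \eqref{pf:Qcontinuous0} I would write $J[f_n,g_n](\mu^n,\widetilde{Q}_n[q_n]) = \E^{Q_n[q_n]}[\int_0^T\int_A f_n(t,X_t,\mu^n_t,a)q_n(t,X_t)(da)\,dt + g_n(X_T,\mu^n_T)]$. The running-cost term is handled by the very same $A_n/B_n$ decomposition (with $f$ in place of $b$, using the $f$-version of Lemma \ref{le:ellipticL1}), uniform integrability being supplied by the $p'$-moment bounds since $f$ grows only like $|x|^p$ with $p' > p$. For the terminal term, $g_n(x,\mu) = g(x,\psi_n*\mu)$ with $g$ continuous and $\psi_n*\mu^n_T \to \mu_T$ (Lemma \ref{le:convolutionuniform}), so continuity of $g$, the already-proven convergence of the time-$T$ marginals, and the moment bounds give $\E^{Q_n[q_n]}g_n(X_T,\mu^n_T) \to \E^{Q[q]}g(X_T,\mu_T)$, which completes the plan.
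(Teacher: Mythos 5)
Your overall strategy parallels the paper's more closely than you might expect: the paper uses exactly your decomposition of the effective drift into $A_n$ (replacing $(b_n,\mu^n)$ by $(b,\mu)$ under $q_n$, killed by Lemma \ref{le:ellipticL1}) and $B_n$ (the weak control convergence, killed by Lemma \ref{le:componentwise} applied to integrands measurable in $(t,x)$ and continuous in $a$), the same treatment of the terminal cost via Lemma \ref{le:convolutionuniform} and continuity of $g$, and the same tail truncation for the running cost. The difference is the analytic engine. The paper never touches densities of the marginals: it verifies that the drift and diffusion coefficients converge \emph{weakly, tested against $C^\infty_0([0,\infty)\times\R^d)$ functions}, and then invokes the Stroock--Varadhan stability results --- Theorem 11.3.4 of \cite{stroockvaradhanbook} to get $Q_n[q_n]\to Q[q]$, and Lemma 11.3.2 to get convergence of the (truncated) running-cost expectations --- which are engineered precisely to convert weak coefficient convergence plus uniform ellipticity into convergence of solutions. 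You instead attempt to rebuild that machinery by hand, and this is where the gaps lie.

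Two concrete problems. First, your regularity claims for the marginal densities $\rho_n$ are not available: the effective drift $b^n(t,x)=\int_A b_n(t,x,\mu^n_t,a)\,[q_n(t,x)](da)$ is merely measurable in $(t,x)$ (no continuity in $x$ is assumed, and $q_n$ is only measurable) with linear growth, and $\sigma_n\sigma_n^\top$ is only measurable in $t$. For such non-divergence-form operators the Stroock--Varadhan/Krylov theory yields local $L^q$-type bounds on occupation densities (this is what underlies their Corollary 9.1.10 and Lemma 11.3.2), \emph{not} Gaussian upper bounds or local H\"older estimates on $x\mapsto\rho_n(u,x)$; consequently the locally uniform convergence $\rho_n\to\rho$ on which your treatment of both $A_n$ and $B_n$ leans is unsubstantiated. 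Second, identifying the subsequential limit through $\E^{Q_n[q_n]}\left[(M^n_t-M^n_s)h\right]=0$ involves an $\F^X_s$-measurable weight $h$; the relevant measure is then the $h$-weighted law $\E^{Q_n[q_n]}\left[h;\,X_u\in dx\right]$, not the bare marginal $\rho_n(u,x)dx$, so estimates on the one-dimensional marginals alone cannot close this step as written --- you would need transition/conditional density estimates uniform in $n$, or an appeal to the Markov property (valid here by well-posedness) to re-express the weighted marginals as marginals of solutions of the same martingale problem started from a different time-$s$ law. Both gaps are exactly what the cited Stroock--Varadhan theorems exist to fill; replacing your density claims with appeals to Theorem 11.3.4 and Lemma 11.3.2 of \cite{stroockvaradhanbook} --- i.e., the paper's route --- repairs the argument.
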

\begin{proof}
Lemma \ref{le:ellipticL1} implies
\[
\lim_{n\rightarrow\infty}\int_0^Tdt\int_{\R^d}dx\phi(t,x)\left[\sigma_n\sigma_n^\top(t,x,\mu^n_t)-\sigma\sigma^\top(t,x,\mu_t)\right] = 0,
\]
for each $\phi \in C^\infty_0([0,\infty) \times \R^d)$. It will follow from \cite[Theorem 11.3.4]{stroockvaradhanbook} that $Q_n[q_n] \rightarrow Q[q]$ if we show that
\begin{align}
\lim_{n\rightarrow\infty}\int_0^Tdt\int_{\R^d}dx\phi(t,x)\left[\int_A[q_n(t,x)](da)b_n(t,x,\mu^n_t,a) - \int_A[q(t,x)](da)b(t,x,\mu_t,a)\right] = 0, \label{pf:Qcontinuous1}
\end{align}
for each $\phi \in C^\infty_0([0,\infty) \times \R^d)$. First, use Lemma \ref{le:ellipticL1} to conclude that
\[
\lim_{n\rightarrow\infty}\int_0^Tdt\int_{\R^d}dx\phi(t,x)\int_A[q_n(t,x)](da)\left[b_n(t,x,\mu^n_t,a) - b(t,x,\mu_t,a)\right] = 0.
\]
On the other hand, Lemma \ref{le:componentwise} (with $p=0$) and the bounded convergence theorem yield
\[
\lim_{n\rightarrow\infty}\int_0^Tdt\int_{\R^d}dx\phi(t,x)\int_A[q_n(t,x) - q(t,x)](da)b(t,x,\mu_t,a) = 0.
\]
Now to prove \eqref{pf:Qcontinuous0}, we must show that
\begin{align}
\lim_{n\rightarrow\infty}\E^{Q_n[q_n]}&\left[\int_0^Tdt\int_A[q_n(t,X_t)](da)f_n(t,X_t,\mu^n_t,a) + g(X_T,\mu^n_T)\right] \nonumber  \\
	= \E^{Q[q]}&\left[\int_0^Tdt\int_A[q(t,X_t)](da)f(t,X_t,\mu_t,a)+ g(X_T,\mu_T)\right]. \label{pf:Qcontinuous2}
\end{align}
First, since $\mu^n_T \rightarrow \mu_T$ in $\P^p(\R^d)$ and $Q_n[q_n]\rightarrow Q[q]$ in $\P^p(\C^d)$, joint continuity of $g$ implies
\begin{align*}
\lim_{n\rightarrow\infty}\E^{Q_n[q_n]}\left[g(X_T,\mu^n_T)\right] = \E^{Q[q]}\left[g(X_T,\mu_T)\right].
\end{align*}
On the other hand, it is proven in the same manner as \eqref{pf:Qcontinuous1} that
\[
\lim_{n\rightarrow\infty}\int_0^Tdt\int_{\R^d}dx\phi(t,x)\left[\int_A[q_n(t,x)](da)f_n(t,x,\mu^n_t,a) - \int_A[q(t,x)](da)f(t,x,\mu_t,a)\right] = 0,
\]
for each $\phi \in C^\infty_0([0,\infty) \times \R^d)$. Noting that the functions
\[
(t,x) \mapsto \int_A[q_n(t,X_t)](da)f_n(t,x,\mu^n_t,a)
\]
are uniformly bounded on $\{(t,x) : |x| \le r\}$, it follows from \cite[Lemma 11.3.2]{stroockvaradhanbook} that
\begin{align*}
\lim_{n\rightarrow\infty}\E^{Q_n[q_n]}&\left[1_{\{\|X\| \le r\}}\int_0^Tdt\int_A[q_n(t,X_t)](da)f_n(t,X_t,\mu^n_t,a)\right] \\
	= \E^{Q[q]}&\left[1_{\{\|X\| \le r\}}\int_0^Tdt\int_A[q(t,X_t)](da)f(t,X_t,\mu_t,a)\right].
\end{align*}
But since $\sup_n\E^{Q_n[q_n]}\|X\|^{p'}_T < \infty$ and $f$ has $p$-order growth in $X$, we also have
\[
\lim_{r \rightarrow \infty} \sup_n\E^{Q_n[q_n]}\left[1_{\{\|X\| > r\}}\int_0^Tdt\int_A[q_n(t,X_t)](da)f_n(t,X_t,\mu^n_t,a)\right] = 0.
\]
This completes the proof of \eqref{pf:Qcontinuous2}.
\end{proof}

\begin{proof}[Proof of Theorem \ref{th:ellipticexistence}]
Recall that $P_n \in \RC^*[b_n,\sigma_n,f_n,g_n,A](\mu^n)$ satisfies $P_n \circ X^{-1} = \mu^n$, and \eqref{pf:elliptic1}. In the newer notation, this means $\widetilde{Q}[\hat{q}_n] = P_n$ and $Q_n[\hat{q}_n] = \mu^n$ for each $n$. (We will supress the $A$ that should appear in the brackets following such notation as $\RC[b,\sigma] := \RC[b,\sigma,A]$, since $A$ will not vary.) 

Since $\M$ is compact, the sequence $\hat{q}_n$ has a limit point $\hat{q} \in \M$. As with $\mu^n$, we will abuse notation somewhat by assuming $\hat{q}_n \rightarrow \hat{q}$, so that now $(\mu^n,\hat{q}_n) \rightarrow (\mu,q)$, while really this is only along a subsequence. Define $P := \widetilde{Q}[\hat{q}]$, so that clearly $P \in \RC[b,\sigma](\mu)$. We will show that in fact $P$ is a relaxed MFG solution. Proposition \ref{pr:Qcontinuous} implies
\[
P \circ X^{-1} = Q[\hat{q}] = \lim_{n\rightarrow\infty}Q_n[\hat{q}_n] = \lim_{n\rightarrow\infty}\mu^n = \mu.
\]
It remains only to show that $P$ is optimal. Fix any $P' \in \RC[b,\sigma](\mu)$. By Theorem \ref{th:markovselection}, there exists a Markovian $P'_0 \in \RC[b,\sigma](\mu)$ with $J(\mu,P'_0) \ge J(\mu,P')$. That $P'_0$ is Markovian means that there exists $q_0 \in \M$ such that $P'_0 = \widetilde{Q}[q_0]$. Now set $P'_n := \widetilde{Q}_n[q_0]$, so that $P'_n \in \RC[b_n,\sigma_n](\mu^n)$. By Proposition \ref{pr:Qcontinuous}, we have both
\begin{align*}
\lim_{n\rightarrow\infty}J[f_n,g_n](\mu^n,P_n) &= J[f,g](\mu,P), \\
\lim_{n\rightarrow\infty}J[f_n,g_n](\mu^n,P'_n) &= J[f,g](\mu,P'_0).
\end{align*}
But for each $n$, $P_n$ is optimal, and so
\[
J[f_n,g_n](\mu^n,P_n) \ge J[f_n,g_n](\mu^n,P'_n).
\]
Therefore
\[
J[f,g](\mu,P) \ge J[f,g](\mu,P'_0) \ge J[f,g](\mu,P').
\]
This holds for all $P' \in \RC[b,\sigma](\mu)$, and thus $P \in \RC^*[b,\sigma,f,g](\mu)$.
\end{proof}

\section{A counterexample} \label{se:counterexample}
It is a bit disappointing that assumptions \ref{assumption:A} and \ref{assumption:C} both exclude linear-quadratic models with objectives which are quadratic in both $a$ and $x$. That is, we do not allow
\[
f(t,x,\mu,a) = -|a|^2 - c\left|x + c'\bar{\mu}\right|^2, \ c,c' \in \R,
\]
where we have abbreviated $\bar{\mu} := \int_\R z\mu(dz)$ for $\mu \in \P^1(\R)$. On the one hand, if $c < 0$ and $|c|$ is large enough, then it may hold for each $\mu$ that $\RC^*(\mu) = \emptyset$, and obviously non-existence of optimal controls prohibits the existence of MFG solutions. The goal now is to demonstrate that even when $f$ and $g$ are bounded from above, we cannot expect a general existence result if $p'=p$. We are certainly not the first to notice what can go wrong in linear-quadratic mean field games when the constants do not align properly; see, for example, \cite[Theorem 3.1]{carmonadelaruelachapelle-mkvvsmfg}. Of course, the refined analyses of \cite{bensoussan-lqmfg,carmonadelaruelachapelle-mkvvsmfg} give many positive results on linear-quadratic mean field games, but we simply wish to provide a tractable example of nonexistence to show that this edge case $p' = p$ requires more careful analysis.

Consider constant volatility $\sigma$, $d=1$, $p'=p=2$, $A = \R$, and and the following data:
\begin{align*}
b(t,x,\mu,a) &= a, \\
f(t,x,\mu,a) &= -a^2, \\
g(x,\mu) &= -(x + c\bar{\mu})^2, \ c \in \R.
\end{align*}
With great foresight, choose $T > 0$, $c \in \R$, and $\lambda \in \P^2(\R)$ such that
\begin{align*}
c=-(1+T)/T, \quad \text{ and } \quad \bar{\lambda} \neq 0.
\end{align*}
Assumption \ref{assumption:A} and \ref{assumption:convex} hold with the one exception that the assumption $p' > p$ is violated. Theorem \ref{th:markovselection} still applies (see Remark \ref{re:markovselection2}), and we conclude that if there exists a relaxed MFG solution, then there must exist a strict MFG solution. Suppose there exists a strict MFG solution $\mu$, so that we may find $P \in \RC^*(\mu)$ satisfying $P \circ X^{-1} = \mu$ and $P(\Lambda = dt\delta_{\alpha^*_t}(da)) = 1$ for some $\F_t$-progressive real-valued process $\alpha^*_t$ satisfying
\[
\E\int_0^1|\alpha^*_t|^2dt < \infty,
\]
where $\E$ denotes expectation under $P$. Denote by $W$ the $P$-Wiener process $W$ on $\Omega$ satisfying
\begin{align}
X_t = X_0 + \int_0^t\alpha^*_sds + \sigma W_t, \ t \in [0,T]. \label{def:counterexample11}
\end{align}
In particular, $\alpha^*$ is the unique minimizer among $\F_t$-progressive square-integrable real-valued processes $\alpha$ of
\begin{align*}
J(\alpha) := \E\left[\int_0^T|\alpha_t|^2dt + \left(X^\alpha_T + c\bar{\mu}_T\right)^2\right],
\end{align*}
where
\[
X^\alpha_t = X_0 + \int_0^t\alpha_sds + \sigma W_t, \ t \in [0,T].
\]
Expand the square
\[
\left(X^\alpha_T + c\bar{\mu}_T\right)^2 = \left(X_0 + \int_0^T\alpha_tdt + \sigma W_T + c\bar{\mu}_T\right)^2
\]
and discard the terms which do not involve $\alpha$ to see that minimizing $J(\alpha)$ is equivalent to minimizing
\[
\widetilde{J}(\alpha) = \E\left[\int_0^T\left[|\alpha_t|^2 + 2\left(X_0 + \sigma W_T + c\bar{\mu}_T\right)\alpha_t\right]dt + \left(\int_0^T\alpha_tdt\right)^2\right]
\]
Since $\alpha^*$ is the unique minimizer, for any other $\alpha$ it holds that
\begin{align*}
0 &= \frac{d}{d\epsilon}\left.\widetilde{J}\left(\alpha^* + \epsilon\alpha\right)\right|_{\epsilon=0} = 2\E\left[\int_0^T\left[\alpha_t\alpha^*_t + \left(X_0 + \sigma W_T + c\bar{\mu}_T\right)\alpha_t\right]dt + \int_0^T\alpha_tdt\int_0^T\alpha^*_tdt\right].
\end{align*}
In particular, if $\alpha$ is deterministic, then
\begin{align*}
0 &= \int_0^T\alpha_t\E\left[\alpha^*_t + X_0 + \sigma W_T + c\bar{\mu}_T + \int_0^T\alpha^*_sds\right]dt
\end{align*}
Since this holds for every deterministic square-integrable $\alpha$, it follows that
\begin{align*}
0 &= \E\left[\alpha^*_t + X_0 + \sigma W_T + c\bar{\mu}_T + \int_0^T\alpha^*_sds\right].
\end{align*}
Noting that $\bar{\mu}_0 = \E X_0$, we get
\[
-\E\alpha^*_t = \bar{\mu}_0 + c\bar{\mu}_T + \int_0^T\E\alpha^*_sds.
\]
In particular, $\E\alpha^*_t$ is constant in $t$. Defining $\bar{\alpha} = \E\alpha^*_t$ for all $t$, we must have
\[
\bar{\alpha} = -\frac{\bar{\mu}_0 + c\bar{\mu}_T}{1+T}.
\]
Take expectations in \eqref{def:counterexample11} to get $\bar{\mu}_t = \bar{\mu}_0 + \bar{\alpha}t$. But then
\begin{align*}
\bar{\mu}_T &= \bar{\mu}_0 + \bar{\alpha}T = \bar{\mu}_0 - \frac{\bar{\mu}_0 + c\bar{\mu}_T}{1+T}T \\
	&= \frac{\bar{\mu}_0}{1+T} + \bar{\mu}_T,
\end{align*}
where in the last line we finally used the particular choice of $c=-(1+T)/T$. This implies $\bar{\mu}_0 = 0$, which contradicts $\bar{\lambda} \neq 0$ since $\bar{\mu}_0 = \bar{\lambda}$. Hence, for this particular choice of data, there is no solution.

\section{Future work} \label{se:extensions}
The ideas developed in this paper seems quite widely applicable to problems of mean field games, and even to mean-field type control problems (i.e. controlled McKean-Vlasov dynamics, as in \cite{bensoussan-mfgbook,carmonadelaruelachapelle-mkvvsmfg}). For example, concurrently with the finalization of this manuscript for publication, recent papers exploited relaxed control theory to study mean field games with common noise \cite{carmonadelaruelacker-mfgcommonnoise} as well as the convergence of equilbria of finite-player games \cite{fischer-mfgconnection,lacker-meanfieldlimit}. Presumably, the same method of studying mean field games via controlled martingale problems should be applicable to much more general types of state processes, such as jump-diffusions or even processes in abstract spaces. Indeed, the corresponding relaxed control theory is developed by El Karoui et al. in \cite[Section 8]{elkaroui-compactification} and by Kurtz and Stockbridge in \cite{kurtzstockbridge-1998}. The latter paper is impressively general in that the state process takes values in an arbitary complete, separable, and locally compact metric space, and various objective structures (finite time horizon, infinite time horizon, ergodic, and first passage time) are permitted. The latter generalization should certainly be adaptable to mean field games.

The difficulty with more general state processes will be ensuring some kind of uniqueness of the martingale problems, to permit lower hemicontinuity of the set-valued map $\mu \mapsto \RC[b,\sigma,A](\mu)$ (see Lemma \ref{le:rccontinuous}). Indeed, this was crucial in establishing upper hemicontinuity of the fixed point map in Lemma \ref{le:Jcontinuous} and was implicit in the approximation procedure of Lemma \ref{le:limp'}. In our case, uniqueness was found by exploiting the well-known relationship between the martingale problem and solutions of SDEs. With this in mind, it should not be difficult to handle jump-diffusions, for which the link between the martingale problem and the stochastic differential equation is fairly well-understood. But the SDEs (or martingale problems) involved in control problems inherently involve \emph{random coefficients}, for which our knowledge of uniqueness is mostly limited to Lipschitz coefficients; for this reason, it would presumably be difficult to proceed on the same level of abstraction as in \cite{kurtzstockbridge-1998}.

In the uniformly elliptic setting of Section \ref{se:elliptic}, we avoided the difficulties caused by random coefficients by working mostly with Markovian controls. The Markovian selection argument of Theorem \ref{th:markovselection} holds rather generally (see Theorem 5.1 and its corollaries in \cite{kurtzstockbridge-1998}), so there is some hope to extend the approach to more general situations.

\bibliographystyle{amsplain}
\bibliography{relaxedMFGbib}

\appendix

\section{Continuity in Wasserstein spaces} \label{ap:wasserstein}
Recall the definition of the Wasserstein metric from \eqref{def:wasserstein}. The definition $\P^0(E) := \P(E)$ will be a useful convention, and recall that $\P^0(E)$ is given the topology of weak convergence. Fix $p=0$ or $p \ge 1$ throughout the section. The following description of Wasserstein space is well-known and used implicitly throughout the paper.

\begin{proposition}[Theorem 7.12 of \cite{villanibook}] \label{pr:wasserstein}
Let $(E,\rho)$ be a metric space, and suppose $\mu,\mu_n \in \P^p(E)$. Then the following are equivalent
\begin{enumerate}
\item $d_{E,p}(\mu_n,\mu) \rightarrow 0$.
\item $\mu_n \rightarrow \mu$ weakly and for some (and thus any) $x_0 \in E$ we have
\begin{align}
\lim_{r \rightarrow \infty}\sup_n\int_{\{x : \rho^p(x,x_0) \ge r\}}\mu_n(dx)\rho^p(x,x_0) = 0. \label{def:uiwasserstein}
\end{align}
\item $\int\phi\,d\mu_n \rightarrow \int\phi\,d\mu$ for all continuous functions $\phi : E \rightarrow \R$ such that there exists $x_0 \in E$ and $c > 0$ for which $|\phi(x)| \le c(1 + \rho^p(x,x_0))$ for all $x \in E$.
\end{enumerate}
In particular, (2) implies that a sequence $(\mu_n)_n \subset \P^p(E)$ is relatively compact if and only it is tight (i.e. relatively compact in $\P(E)$) and satisfies \eqref{def:uiwasserstein}.
\end{proposition}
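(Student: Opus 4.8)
The plan is to establish the cycle of implications $(1)\Rightarrow(2)\Rightarrow(3)\Rightarrow(2)\Rightarrow(1)$, after disposing of the degenerate case $p=0$: there $\rho^0\equiv 1$, the tail condition \eqref{def:uiwasserstein} is vacuous, and all three statements reduce to the definition of weak convergence, so nothing is to be shown. Assume henceforth $p\ge 1$ and abbreviate $g(x):=\rho^p(x,x_0)$. For $(1)\Rightarrow(2)$, first note that $d_{E,p}$ dominates the bounded-Lipschitz (Dudley) metric, which metrizes weak convergence, so $d_{E,p}(\mu_n,\mu)\to 0$ forces $\mu_n\to\mu$ weakly. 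Next, applying the triangle inequality for $d_{E,p}$ against the Dirac mass $\delta_{x_0}$ and using $d_{E,p}^p(\nu,\delta_{x_0})=\int g\,d\nu$, I obtain $|(\int g\,d\mu_n)^{1/p}-(\int g\,d\mu)^{1/p}|\le d_{E,p}(\mu_n,\mu)\to 0$, i.e.\ the $p$-th moments converge. Finally I invoke the standard fact that weak convergence together with convergence of $\int g\,d\mu_n$ to the finite limit $\int g\,d\mu$ yields \eqref{def:uiwasserstein}; this is proved by truncating $g$ at level $K$ via the bounded continuous function $g\wedge K$, passing to the weak limit, and letting $K\to\infty$.

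For $(2)\Rightarrow(3)$, given $\phi$ continuous with $|\phi|\le c(1+g)$, I split the integral at the level set $\{g\le r\}$: on $\{g>r\}$ the contribution is dominated by $\int_{\{g>r\}}c(1+g)\,d\mu_n$, made uniformly small by \eqref{def:uiwasserstein}, while on $\{g\le r\}$ I approximate $\phi$ by a bounded continuous function (multiplying by a continuous cutoff) and apply weak convergence. For $(3)\Rightarrow(2)$, taking $\phi$ bounded continuous gives weak convergence, and taking $\phi=g$ (which satisfies the growth bound) gives convergence of the $p$-th moments; as in the final step of $(1)\Rightarrow(2)$, these together imply \eqref{def:uiwasserstein}.

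The substantive direction, and the step I expect to be the main obstacle, is $(2)\Rightarrow(1)$. Here I would invoke the Skorokhod representation theorem: since $\mu_n\to\mu$ weakly on the separable space $E$, there exist $E$-valued random variables $Y_n,Y$ on a common probability space with laws $\mu_n,\mu$ such that $\rho(Y_n,Y)\to 0$ almost surely. Then $d_{E,p}^p(\mu_n,\mu)\le \E[\rho^p(Y_n,Y)]$, and it remains only to upgrade the almost sure convergence $\rho^p(Y_n,Y)\to 0$ to convergence in $L^1$. The elementary bound $\rho^p(Y_n,Y)\le 2^{p-1}(\rho^p(Y_n,x_0)+\rho^p(Y,x_0))$ shows the family $\{\rho^p(Y_n,Y)\}_n$ is uniformly integrable: the first term is uniformly integrable precisely by \eqref{def:uiwasserstein} (rewritten in terms of the identically distributed $Y_n$), and the second is a fixed integrable variable. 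Vitali's convergence theorem then yields $\E[\rho^p(Y_n,Y)]\to 0$, whence $d_{E,p}(\mu_n,\mu)\to 0$. The only delicate points are the applicability of Skorokhod representation, which needs $\mu$ to be carried by a separable set (automatic since $E$ is separable), and the passage from the tail condition \eqref{def:uiwasserstein} to genuine uniform integrability of $\rho^p(Y_n,x_0)$, which is immediate because $Y_n$ has law $\mu_n$.

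The concluding ``in particular'' assertion then follows by combining the equivalence $(1)\Leftrightarrow(2)$ with Prokhorov's theorem: a sequence is $d_{E,p}$-relatively compact iff every subsequence has a further subsequence converging in $d_{E,p}$, and by $(1)\Leftrightarrow(2)$ this happens exactly when the sequence is tight (so that weakly convergent subsequences exist by Prokhorov) and satisfies the uniform tail bound \eqref{def:uiwasserstein}, which is preserved along subsequences.
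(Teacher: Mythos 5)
Your proposal is correct, but there is nothing in the paper to compare it against: the paper states this proposition as a citation (Theorem 7.12 of Villani's book) and gives no proof, so yours is a genuinely self-contained substitute. Your argument is the standard one and all the steps check out: $W_p$ dominates $W_1$ by Jensen and hence the bounded-Lipschitz metric (in fact duality is not even needed here, since for a bounded $L$-Lipschitz $f$ any near-optimal coupling gives $|\int f\,d\mu_n - \int f\,d\mu| \le L\,d_{E,p}(\mu_n,\mu) + \varepsilon$ directly); the moment convergence via the triangle inequality against $\delta_{x_0}$ is exact because $d_{E,p}^p(\nu,\delta_{x_0}) = \int \rho^p(\cdot,x_0)\,d\nu$; and the Skorokhod-plus-Vitali treatment of $(2)\Rightarrow(1)$ is a clean alternative to the coupling-tightness argument used in Villani's own proof, with the uniform integrability of $\rho^p(Y_n,x_0)$ being literally a restatement of \eqref{def:uiwasserstein}. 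Two small points deserve a sentence in a polished write-up. First, several of your tools (the Dudley metric metrizing weak convergence, Skorokhod representation, and both directions of Prokhorov) require $E$ separable, and the converse half of Prokhorov requires completeness; the proposition as stated says only ``metric space,'' but this is harmless since the paper's standing convention defines $\P^p(E)$ only for separable $E$ and applies the result on Polish spaces. Second, in the ``in particular'' assertion, before invoking $(2)\Rightarrow(1)$ along a weakly convergent subsequence you should verify that the weak limit $\mu$ actually lies in $\P^p(E)$; this follows from \eqref{def:uiwasserstein} by monotone truncation, since $\int \rho^p(\cdot,x_0)\wedge K\,d\mu = \lim_k \int \rho^p(\cdot,x_0)\wedge K\,d\mu_{n_k} \le \sup_n \int \rho^p(\cdot,x_0)\,d\mu_n < \infty$ uniformly in $K$. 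Also, in $(2)\Rightarrow(3)$, remember that the tail of the limit $\mu$ itself must be made small as well, which is automatic from $\mu \in \P^p(E)$ by dominated convergence. None of these affect the substance of the proof.
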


Properties of the weak convergence topology of $\P(E)$ are naturally transferred to $\P^p(E)$ via homeomorphism, and this is behind most of the results to follow: Fix  $x_0 \in E$, and define $\psi(x) := 1 + \rho^p(x,x_0)$. For each $\mu \in \P^p(E)$ define a measure $\psi\,\mu \in \P(E)$ by $\psi\,\mu(B) = \int_B\psi\,d\mu$ for all $B \in \B(E)$. Then $\mu \mapsto \psi\,\mu/\int\psi\,d\mu$ is easily seen to define a homeomorphism from $(\P^p(E),d_{E,p})$ to $\P(E)$ with the weak topology.

In the following two lemmas, let $(E,\rho_E)$ and $(F,\rho_F)$ be two complete separable metric spaces. Equip $E \times F$ with the metric formed by adding the metrics of $E$ and $F$, given by $((x_1,x_2),(y_1,y_2)) \mapsto \rho_1(x_1,y_1) + \rho_2(x_2,y_2)$, but this choice is inconsequential. The following few lemmas are all fairly well-known when $p=0$, and the general case is then proven using the homeomorphism.

\begin{lemma} \label{le:productrelcompactness}
A set $K \subset \P^p(E \times F)$ is relatively compact if and only if $\{P(\cdot \times F) : P \in K\} \subset \P^p(E)$ and $\{P(E \times \cdot) : P \in K\} \subset \P^p(F)$ are relatively compact.
\end{lemma}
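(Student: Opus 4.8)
The plan is to reduce the whole statement to the characterization of relative compactness in Wasserstein space furnished by Proposition \ref{pr:wasserstein}. Its final assertion treats \emph{sequences}: relative compactness in $\P^p$ is equivalent to tightness together with the uniform moment-integrability condition \eqref{def:uiwasserstein}. Since $\P^p(G)$ is metrizable whenever $(G,\rho)$ is complete and separable, relative compactness coincides with sequential relative compactness, and a short subsequence-extraction argument upgrades Proposition \ref{pr:wasserstein} to the following set-level criterion, which I would record once and then apply three times (to $E$, $F$, and $E\times F$): a set $\mathcal{M}\subset\P^p(G)$ is relatively compact if and only if it is tight and, for some (hence any) $z_0\in G$,
\begin{equation}
\lim_{r\to\infty}\sup_{\mu\in\mathcal{M}}\int_{\{\rho^p(z,z_0)\ge r\}}\rho^p(z,z_0)\,\mu(dz)=0. \tag{$\ast$}
\end{equation}
The ``if'' part of $(\ast)$ is immediate from the sequence version (any sequence in $\mathcal{M}$ is tight and satisfies \eqref{def:uiwasserstein}, hence has a $\P^p$-convergent subsequence). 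The ``only if'' part is where the work lies: if $(\ast)$ failed one could extract $\mu_n\in\mathcal{M}$ and $r_n\uparrow\infty$ with $\int_{\{\rho^p\ge r_n\}}\rho^p\,d\mu_n\ge\varepsilon$, pass by relative compactness to a $\P^p$-convergent subsequence, and contradict \eqref{def:uiwasserstein} applied to that subsequence.

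For the forward implication I would observe that the two marginal maps $P\mapsto P(\cdot\times F)$ and $P\mapsto P(E\times\cdot)$ are the pushforwards of $P$ under the coordinate projections $\pi_E:E\times F\to E$ and $\pi_F:E\times F\to F$. With the sum metric on $E\times F$ these projections are $1$-Lipschitz, so pushing forward a near-optimal coupling shows each marginal map is itself $1$-Lipschitz from $\P^p(E\times F)$ into $\P^p(E)$, respectively $\P^p(F)$; in particular each is continuous. A continuous map sends the compact set $\overline{K}$ onto a compact set, and the marginal families are contained in these images, hence relatively compact. This direction needs no appeal to $(\ast)$.

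For the converse, suppose both marginal families are relatively compact. First I would show $K$ is tight: the marginals are tight (being relatively compact already in the weak topology), so for $\varepsilon>0$ there are compacts $K_E\subset E$, $K_F\subset F$ with $P(K_E^c\times F)<\varepsilon/2$ and $P(E\times K_F^c)<\varepsilon/2$ for all $P\in K$, whence $P((K_E\times K_F)^c)<\varepsilon$ and $K_E\times K_F$ is compact. Next I would verify $(\ast)$ for $K$: by the set-level criterion applied to the marginals, their relative compactness makes $\{\rho_E^p(\cdot,x_0)\}$ and $\{\rho_F^p(\cdot,y_0)\}$ uniformly integrable, uniformly over $P\in K$; using $\rho^p((x,y),(x_0,y_0))\le 2^{p-1}\bigl(\rho_E^p(x,x_0)+\rho_F^p(y,y_0)\bigr)$ for $p\ge 1$ together with the stability of uniform integrability under finite sums, scalar multiples, and domination, I conclude that $\{\rho^p(\cdot,(x_0,y_0))\}$ is uniformly integrable uniformly over $P\in K$, which is exactly $(\ast)$ for $E\times F$. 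Tightness plus $(\ast)$ then yields relative compactness of $K$. (For $p=0$ the integrand is constant and $(\ast)$ is vacuous, so the assertion reduces to the classical equivalence of tightness of product measures and their marginals.) The only delicate point in the whole argument is the ``only if'' direction of $(\ast)$ — namely that relative compactness of a \emph{set} in $\P^p$ forces the moment-integrability to hold uniformly over that set; everything else is Lipschitz continuity of the marginal maps, a routine product-tightness argument, and elementary uniform-integrability bookkeeping.
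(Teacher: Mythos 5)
Your proof is correct, and it takes a genuinely different route from the paper's treatment. The paper offers no explicit argument for this lemma: it relies on the remark immediately preceding it, namely that the statement is classical for $p=0$ (by Prokhorov's theorem, a family of laws on $E\times F$ is tight if and only if both marginal families are tight) and that ``the general case is then proven using the homeomorphism'' $\mu \mapsto \psi\,\mu/\int\psi\,d\mu$ from $(\P^p(E),d_{E,p})$ onto $\P(E)$ with its weak topology, where $\psi := 1+\rho^p(\cdot,x_0)$. You instead stay inside the Wasserstein spaces throughout: you first upgrade the sequential statement at the end of Proposition \ref{pr:wasserstein} to a set-level criterion (relative compactness in $\P^p$ is equivalent to tightness plus the moment condition \eqref{def:uiwasserstein} holding uniformly over the set), which is a short but necessary extraction argument the paper never records; you then obtain the forward implication from the $1$-Lipschitz continuity of the marginal maps, and the converse from the classical product-tightness bound together with $\rho^p((x,y),(x_0,y_0)) \le 2^{p-1}\bigl(\rho_E^p(x,x_0)+\rho_F^p(y,y_0)\bigr)$ and the stability of uniform integrability under sums, scalar multiples, and domination. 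What your route buys is transparency on a point the paper's one-line recipe glosses over: the product-space weight $1+\rho^p_{E\times F}$ does not commute with taking marginals (the $E$-marginal of the reweighted law is not the reweighted $E$-marginal), so the homeomorphism transfer for this particular lemma is not mechanical and would in any case require essentially the same uniform-integrability bookkeeping you carry out explicitly. What the paper's route buys is economy: the single reweighting remark is meant to cover all the appendix lemmas at once (e.g. Lemma \ref{le:componentwise}, where it genuinely is the fastest path), at the cost of leaving details such as these to the reader.
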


\begin{lemma} \label{le:componentwise}
Let $\phi : E \times F \rightarrow \R$ satisfy the following:
\begin{enumerate}
\item $\phi(\cdot,y)$ is measurable for each $y \in F$.
\item $\phi(x,\cdot)$ is continuous for each $x \in E$.
\item There exist $c > 0$, $x_0 \in E$, and $y_0 \in E_2$ such that
\[
|\phi(x,y)| \le c(1 + \rho^p_E(x,x_0) + \rho^p_F(y,y_0)), \ \forall(x,y) \in E \times F.
\]
\end{enumerate}
If $P_n \rightarrow P$ in $\P^p(E \times F)$ and $P_n(\cdot \times F) = P(\cdot \times F)$ for all $n$, then $\int\phi\,dP_n \rightarrow \int\phi\,dP$.
\end{lemma}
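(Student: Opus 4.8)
The plan is to reduce the statement to the case of a \emph{bounded} integrand and then to exploit the common $E$-marginal, call it $\nu := P_n(\cdot \times F) = P(\cdot \times F)$, to approximate $\phi$ by finite sums of product functions, each of the form $h(x)g(y)$.

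First I would reduce to bounded $\phi$. Writing $z_0 := (x_0,y_0)$ and using the product metric $\rho = \rho_E + \rho_F$, the growth bound (3) gives $|\phi| \le c(1 + \rho^p(\cdot,z_0))$ (for $p \ge 1$ via $\rho_E^p + \rho_F^p \le \rho^p$; for $p=0$ the integrand is already bounded and this step is vacuous). Since $P_n \to P$ in $\P^p(E \times F)$, Proposition \ref{pr:wasserstein}(2) supplies the uniform integrability
\[
\lim_{r\to\infty}\sup_n\int_{\{\rho^p(\cdot,z_0) \ge r\}}\rho^p(\cdot,z_0)\,dP_n = 0,
\]
together with $\sup_n \int \rho^p(\cdot,z_0)\,dP_n < \infty$. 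Truncating $\phi$ at height $M$ via $\phi^{(M)} := (\phi \wedge M)\vee(-M)$ preserves measurability in $x$ and continuity in $y$, and a Markov-inequality estimate shows $\sup_n \int|\phi - \phi^{(M)}|\,dP_n \to 0$ as $M \to \infty$, and likewise for $P$. Hence it suffices to treat bounded $\phi$; and for bounded $\phi$ only weak convergence $P_n \to P$ is needed, so the remainder is really the $p=0$ case.

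The heart is two observations, both turning on the common marginal $\nu$. Call a \emph{product function} any $h(x)g(y)$ with $h$ bounded measurable on $E$ and $g \in C_b(F)$. Approximating $h$ in $L^1(\nu)$ by $h_k \in C_b(E)$ (possible since finite Borel measures on a Polish space are regular), one obtains for every $n$
\[
\Big|\int hg\,dP_n - \int h_k g\,dP_n\Big| \le \|g\|_\infty \int |h-h_k|\,d\nu,
\]
a bound \emph{independent of $n$} precisely because each $P_n$ has $E$-marginal $\nu$; since $h_k \otimes g \in C_b(E\times F)$ passes to the limit under weak convergence, one concludes $\int hg\,dP_n \to \int hg\,dP$, and hence the same for finite sums of product functions. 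The second observation handles a general bounded Carath\'eodory $\phi$: given $\epsilon > 0$, tightness of the $F$-marginals (which follows from their $\P^p(F)$-convergence) yields a compact $L \subset F$ with $\sup_n (P_n)_F(L^c) < \epsilon$ and $P_F(L^c) < \epsilon$; taking a finite partition of unity $\{g_i\}_{i=1}^N \subset C_b(F)$ subordinate to a cover of $L$ by balls of radius $\delta$ centered at points $y_i$, and setting $\psi := \sum_{i=1}^N \phi(\cdot,y_i)g_i$ (a finite sum of product functions), one gets on $E\times L$ the bound $|\phi - \psi| \le \omega_\phi(x,\delta)$, where $\omega_\phi(x,\delta)$ is the modulus of continuity of $\phi(x,\cdot)$ on $L$ at scale $\delta$. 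Crucially, $\int_{E\times L}|\phi-\psi|\,dP_n \le \int_E \omega_\phi(x,\delta)\,d\nu(x)$ is again independent of $n$, while the contribution of $E\times L^c$ is at most $2\|\phi\|_\infty\epsilon$ for every $n$.

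Putting these together, for fixed $\epsilon$ and $\delta$ the finite-sum convergence gives $\int\psi\,dP_n \to \int\psi\,dP$, whence
\[
\limsup_n \Big|\int\phi\,dP_n - \int\phi\,dP\Big| \le 2\int_E \omega_\phi(x,\delta)\,d\nu(x) + 4\|\phi\|_\infty\epsilon;
\]
letting $\delta \downarrow 0$ (the first term vanishes by dominated convergence, since $\phi(x,\cdot)$ is uniformly continuous on the compact $L$ for each $x$) and then $\epsilon \downarrow 0$ finishes the proof. The main obstacle I anticipate is exactly the uniform-in-$n$ control of the approximation error for an integrand that is only measurable in $x$: this is where the hypothesis that all $P_n$ share the $E$-marginal $\nu$ is indispensable, both in the product-function step and in absorbing the modulus-of-continuity term; without it one could not push a bounded pointwise approximation through the limit uniformly in $n$. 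A minor technical point to verify along the way is the measurability of $x \mapsto \omega_\phi(x,\delta)$, which follows by restricting the defining suprema to countable dense subsets using continuity of $\phi(x,\cdot)$.
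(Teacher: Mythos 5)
Your proof is correct, but it takes a genuinely different route from the paper. The paper's proof is essentially a two-line citation: the bounded case ($p=0$) is attributed to Jacod and M\'emin \cite{jacodmemin-stable}, and the general case is obtained by the homeomorphism $\mu \mapsto \psi\,\mu/\int\psi\,d\mu$ (with $\psi = 1+\rho^p(\cdot,z_0)$) described just before the lemma, which transports weak convergence to $\P^p$-convergence. You instead give a self-contained argument: (i) a truncation step, using the uniform integrability built into Proposition \ref{pr:wasserstein}(2), to reduce to bounded $\phi$ and plain weak convergence; (ii) the classical mechanism behind the Jacod--M\'emin result, namely $L^1(\nu)$-approximation of the measurable factor by continuous functions for product integrands $h(x)g(y)$, followed by a partition-of-unity approximation of a general bounded Carath\'eodory $\phi$ by finite sums of such products on a compact set $L \subset F$, with every approximation error controlled \emph{uniformly in $n$} precisely because all $P_n$ share the $E$-marginal $\nu$. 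What the paper's route buys is brevity and reliance on established literature; what yours buys is transparency --- it exposes exactly where the common-marginal hypothesis is used --- and it also sidesteps a wrinkle in the paper's extension step: the reweighting $\psi(x,y)P_n$ on the product space does not preserve equality of the $E$-marginals (the weight depends on $y$), whereas your truncation modifies only the integrand and leaves the measures, hence the marginal hypothesis, untouched. The routine points you flag (measurability of $x \mapsto \omega_\phi(x,\delta)$ via countable dense subsets, choosing the ball centers $y_i$ in $L$, the ordering of limits in $n$, $\delta$, $\epsilon$) are all handled correctly or are straightforwardly fixable as indicated.
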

\begin{proof}
The $p=0$ case was shown by Jacod and M\'emin in \cite{jacodmemin-stable}, and this extends to the general case using the homeomorphism described above.
\end{proof}

\begin{corollary} \label{co:usc}
Suppose $\phi$ satisfies (1) and (2) of Lemma \ref{le:componentwise}, and instead
\begin{enumerate}
\item[(3')] There exist $c > 0$, $x_0 \in E$, and $y_0 \in E_2$ such that
\[
\phi(x,y) \le c(1 + \rho^p_E(x,x_0) + \rho^p_F(y,y_0)), \ \forall(x,y) \in E \times F.
\]
\end{enumerate}
If $P_n \rightarrow P$ in $\P^p(E \times F)$ and $P_n(\cdot \times F) = P(\cdot \times F)$ for all $n$, then
\[
\limsup_{n\rightarrow\infty}\int\phi\,dP_n \le \int\phi\,dP.
\]
\end{corollary}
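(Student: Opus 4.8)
The plan is to reduce to Lemma \ref{le:componentwise} by truncating $\phi$ from below, exploiting that the only hypothesis of that lemma which fails here is a lower $p$-growth bound: we are given the upper bound (3') but nothing preventing $\phi$ from tending to $-\infty$. Write $g(x,y) := c(1 + \rho^p_E(x,x_0) + \rho^p_F(y,y_0))$ for the majorant appearing in (3'), and for each $M > 0$ set $\phi_M := \phi \vee (-M)$. Note that $g$ is $P$-integrable and $P_n$-integrable since these measures lie in $\P^p(E \times F)$.

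First I would verify that every $\phi_M$ satisfies all three hypotheses of Lemma \ref{le:componentwise}. Condition (1) is inherited, since $x \mapsto \max(\phi(x,y),-M)$ is measurable whenever $\phi(\cdot,y)$ is; condition (2) holds because taking the pointwise maximum with the constant $-M$ preserves continuity in $y$; and condition (3) holds because $-M \le \phi_M \le g$ gives $|\phi_M| \le M + g$, which is of $p$-growth. Since moreover $P_n(\cdot \times F) = P(\cdot \times F)$ for all $n$, Lemma \ref{le:componentwise} applies to each $\phi_M$ and yields $\int \phi_M\,dP_n \to \int \phi_M\,dP$. Because $\phi \le \phi_M$ pointwise, for each fixed $M$ this gives
\[
\limsup_{n\to\infty}\int\phi\,dP_n \le \limsup_{n\to\infty}\int\phi_M\,dP_n = \int\phi_M\,dP,
\]
where $\int\phi\,dP_n \in [-\infty,\infty)$ is well-defined because its positive part is dominated by the integrable function $g$. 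It then remains to let $M \to \infty$: the $\phi_M$ decrease pointwise to $\phi$ and are all bounded above by the $P$-integrable function $g$, so applying the monotone convergence theorem to $g - \phi_M \ge 0$ gives $\int\phi_M\,dP \downarrow \int\phi\,dP$ (possibly $-\infty$). Taking the infimum over $M$ yields $\limsup_n \int\phi\,dP_n \le \int\phi\,dP$.

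The argument is essentially routine once the truncation is set up correctly, and I do not anticipate a genuine obstacle; the only point requiring care is that the truncation must be from \emph{below}, so that $\phi_M$ stays sandwiched between a constant and the $p$-growth majorant $g$ and thus remains within the scope of Lemma \ref{le:componentwise}, while the marginal-fixing hypothesis $P_n(\cdot \times F) = P(\cdot \times F)$ passes to $\phi_M$ unchanged. One should also keep track of which integrals may equal $-\infty$, but since every quantity of interest has $P$- and $P_n$-integrable positive part, all the $\limsup$ and monotone-convergence manipulations are valid in $[-\infty,\infty)$.
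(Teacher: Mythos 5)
Your proposal is correct and is essentially identical to the paper's own proof: truncate $\phi$ from below as $\phi \vee (-M)$, apply Lemma \ref{le:componentwise} to the truncation, bound the $\limsup$ using $\phi \le \phi \vee (-M)$, and conclude by sending $M \to \infty$ with the monotone convergence theorem. Your write-up simply spells out the hypothesis verification and the $[-\infty,\infty)$ bookkeeping that the paper leaves implicit.
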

\begin{proof}
For each $M < 0$, Lemma \ref{le:componentwise} implies
\begin{align*}
\int M \vee \phi\,dP &= \lim_{n\rightarrow\infty}\int M \vee \phi\,dP_n \ge \limsup_{n\rightarrow\infty}\int \phi\,dP_n.
\end{align*}
Send $M \downarrow -\infty$ and use the monotone convergence theorem.
\end{proof}

The rest of the section specializes to the space $\V[A]$. Recall that $A$ is a closed subset of Euclidean space, $p \ge 1$, and $\V[A]$ is defined at the beginning of Section \ref{se:relaxed}. Note that Proposition \ref{pr:wasserstein} implies that $d_{\V[A]}$ metrizes weak convergence when $A$ is compact. 
The following Corollary \ref{co:V} was used in the proof of Lemma \ref{le:Jcontinuous}.

\begin{corollary} \label{co:V}
Let $(E,\rho)$ be a complete separable metric space. Let $\phi : [0,T] \times E \times A \rightarrow \R$ be measurable with $\phi(t,\cdot)$ jointly continuous for each $t \in [0,T]$. Suppose there exist $c > 0$ and $x_0 \in E$ such that one of the following holds:
\begin{enumerate}
\item $\phi(t,x,a) \le c(1 + \rho^p(x,x_0) + |a|^p)$, for all $(t,x,a)$.
\item $|\phi(t,x,a)| \le c(1 + \rho^p(x,x_0) + |a|^p)$, for all $(t,x,a)$.
\end{enumerate}
If (1) holds, then the following function is upper semicontinuous:
\[
C([0,T];E) \times \V[A] \ni (x,q) \mapsto \int q(dt,da)\phi(t,x(t),a).
\]
If (2) holds, then this function is continuous.
\end{corollary}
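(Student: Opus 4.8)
The plan is to reduce the claim to the product-space results of the appendix (Lemma \ref{le:componentwise} and Corollary \ref{co:usc}) by encoding the path-dependence of $\phi(t,x(t),a)$ into a pushforward measure. For $(x,q) \in C([0,T];E) \times \V[A]$ define $\Theta_x : [0,T] \times A \to [0,T] \times E \times A$ by $\Theta_x(t,a) := (t,x(t),a)$, and set $\iota(x,q) := (q/T) \circ \Theta_x^{-1} \in \P([0,T] \times E \times A)$. Then, by the change-of-variables formula,
\[
\int q(dt,da)\,\phi(t,x(t),a) = T\int_{[0,T] \times E \times A}\phi\,d\iota(x,q),
\]
so it suffices to understand the continuity of $(x,q) \mapsto \iota(x,q)$ together with the integration of $\phi$ against the limit. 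Two features make this work. First, the $[0,T]$-marginal of $\iota(x,q)$ is, for \emph{every} $(x,q)$, equal to the normalized Lebesgue measure $dt/T$, since the time coordinate is fixed by $\Theta_x$ and $q/T$ has time-marginal $dt/T$. Second, $\phi$, viewed as a function on $[0,T] \times (E \times A)$, is measurable in the time variable and jointly continuous in $(y,a) \in E \times A$, and its growth bound matches the hypotheses of the appendix results: because $[0,T]$ is bounded, $A$ is Euclidean, and $p \ge 1$, the quantity $\rho^p(y,y_0) + |a|^p$ is comparable, up to additive and multiplicative constants, to $\rho^p\big((t,y,a),(t_0,y_0,a_0)\big)$ on the product space.

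The key step is to show that $\iota : C([0,T];E) \times \V[A] \to \P^p([0,T] \times E \times A)$ is continuous. Let $(x_n,q_n) \to (x,q)$ and write $\nu_n := q_n/T \to \nu := q/T$ in $\P^p([0,T] \times A)$. I would verify $\iota(x_n,q_n) = \nu_n \circ \Theta_{x_n}^{-1} \to \iota(x,q)$ directly through criterion (3) of Proposition \ref{pr:wasserstein}: for a continuous $g$ on $[0,T] \times E \times A$ with $|g(t,y,a)| \le c(1 + \rho^p(y,y_0) + |a|^p)$,
\[
\int g\,d\iota(x_n,q_n) = \int g(t,x_n(t),a)\,\nu_n(dt,da),
\]
and I split off $\int g(t,x(t),a)\,\nu_n(dt,da)$. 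The latter converges to $\int g(t,x(t),a)\,\nu(dt,da) = \int g\,d\iota(x,q)$ by Proposition \ref{pr:wasserstein}(3) applied to $\nu_n \to \nu$, since $(t,a) \mapsto g(t,x(t),a)$ is continuous with $p$-growth in $a$ (and bounded in $t$, as $x([0,T])$ is bounded).

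The remaining difference $\int [g(t,x_n(t),a) - g(t,x(t),a)]\,\nu_n(dt,da)$ is where the only real work lies, and I expect the main obstacle to be the non-compactness of both $E$ and $A$. Here I would use that $\{x_n\} \cup \{x\}$ is relatively compact in $C([0,T];E)$, so all path values $x_n(t),x(t)$ lie in a fixed compact set $\bar K \subset E$; the uniform integrability of $\nu_n$ supplied by $\P^p$-convergence (Proposition \ref{pr:wasserstein}(2)) lets me discard the contribution of $\{|a| > r\}$ uniformly in $n$, up to an error vanishing as $r \to \infty$; and on the compact set $[0,T] \times \bar K \times \{|a| \le r\}$ the function $g$ is uniformly continuous, so $\sup_{t,\,|a|\le r}|g(t,x_n(t),a) - g(t,x(t),a)| \to 0$ as $n \to \infty$ because $\|x_n - x\|_T \to 0$. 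Letting $n \to \infty$ and then $r \to \infty$ shows the difference vanishes, establishing continuity of $\iota$.

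With continuity of $\iota$ and the constant (hence identical) time-marginals in hand, the proof concludes by applying the appendix results on $[0,T] \times (E \times A)$, treating $[0,T]$ as the coordinate with fixed marginal $dt/T$ and $E \times A$ as the free coordinate. If the two-sided bound (2) holds, Lemma \ref{le:componentwise} gives $\int \phi\,d\iota(x_n,q_n) \to \int \phi\,d\iota(x,q)$, and multiplying by $T$ yields continuity of the functional. If only the one-sided bound (1) holds, Corollary \ref{co:usc} gives $\limsup_n \int \phi\,d\iota(x_n,q_n) \le \int \phi\,d\iota(x,q)$, which is the asserted upper semicontinuity.
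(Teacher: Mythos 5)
Your proof is correct and takes essentially the same approach as the paper: your pushforward measure $\iota(x,q)$ is exactly the paper's measure $\tfrac{1}{T}q(dt,da)\delta_{x(t)}(de)$ on $[0,T]\times E\times A$, and the paper likewise concludes by applying Lemma \ref{le:componentwise} and Corollary \ref{co:usc} (using that all these measures share the fixed time-marginal $dt/T$) after observing that this map into $\P^p([0,T]\times E\times A)$ is jointly continuous. The only difference is that the paper states this joint continuity without proof, whereas you supply the (correct) verification via Proposition \ref{pr:wasserstein}, compactness of the path ranges, and uniform integrability.
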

\begin{proof}
These follow immediate from Lemma \ref{le:componentwise} and Corollary \ref{co:usc}, after observing that the following function is jointly continuous:
\[
C([0,T];E) \times \V[A] \ni (x,q) \mapsto \frac{1}{T}q(dt,da)\delta_{x(t)}(de) \in \P^p([0,T] \times A \times E)
\]
\end{proof}

\section{Compactness in $\P^p(\P^p(E))$} \label{se:metawasserstein}
For $P \in \P(\P(E))$, define the mean measure $mP \in \P(E)$ by
\[
mP(C) := \int_{\P(E)}P(d\mu)\mu(C).
\]
The following proposition for $p=0$ may be found in Proposition 2.2(ii) of Sznitman \cite{sznitman}, and the proof for general $p \ge 1$ is a straightforward adaptation.

\begin{proposition} \label{pr:metawassersteincompact}
Let $(E,\rho)$ be a complete separable metric space. A subset $K$ of $\P^p(\P^p(E))$ is relatively compact if and only if $\{mP : P \in K\}$ is relatively compact in $\P^p(E)$ and
\begin{align}
\lim_{r \rightarrow \infty}\sup_{P \in K}\int_{\left\{\mu : \int_E\mu(dx)\rho^p(x,x_0) > r\right\}}P(d\mu)\int_E\mu(dx)\rho^p(x,x_0) = 0, \label{def:uimetawasserstein}
\end{align}
for some $x_0 \in E$.
\end{proposition}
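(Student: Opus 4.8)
The plan is to deduce everything from Proposition \ref{pr:wasserstein}, applied once to the ``outer'' space $(\P^p(E), d_{E,p})$ and once to the ``inner'' space $(E,\rho)$, following the scheme of Sznitman's $p=0$ argument. Write $I(\mu) := \int_E \rho^p(x,x_0)\mu(dx)$ and observe the key identity $I(\mu) = d_{E,p}^p(\mu,\delta_{x_0})$, since $\mu\otimes\delta_{x_0}$ is the only coupling of $\mu$ and $\delta_{x_0}$. Applying Proposition \ref{pr:wasserstein}(2) to the complete separable metric space $(\P^p(E), d_{E,p})$ with reference point $\delta_{x_0}$, relative compactness of $K$ is equivalent to the conjunction of (a) tightness of $K$ in $\P(\P^p(E))$ for the weak topology, and (b) the uniform integrability \eqref{def:uimetawasserstein}, which is exactly condition (b) once $d_{E,p}^p(\mu,\delta_{x_0})$ is rewritten as $I(\mu)$. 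Thus \eqref{def:uimetawasserstein} is automatically one half of the claim, and the real work is to show that, in the presence of \eqref{def:uimetawasserstein}, property (a) is equivalent to relative compactness of $\{mP : P\in K\}$ in $\P^p(E)$.

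Necessity of the mean-measure condition is the easy direction: the map $m : \P^p(\P^p(E)) \to \P^p(E)$ is continuous, because for any continuous $\phi : E \to \R$ with $|\phi(x)| \le c(1+\rho^p(x,x_0))$ the functional $\mu \mapsto \int_E\phi\,d\mu$ is continuous on $\P^p(E)$ and dominated by $c(1 + d_{E,p}^p(\mu,\delta_{x_0}))$, so characterization (3) of Proposition \ref{pr:wasserstein} transfers convergence $P_n \to P$ to $mP_n \to mP$. A continuous image of the relatively compact set $K$ is relatively compact, giving that $\{mP : P \in K\}$ is relatively compact.

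For sufficiency, I would assume $\{mP\}$ is relatively compact in $\P^p(E)$ together with \eqref{def:uimetawasserstein}; by the above it remains only to produce (a). Applying Proposition \ref{pr:wasserstein} to $(E,\rho)$ splits relative compactness of $\{mP\}$ into (a$'$) tightness of $\{mP\}$ in $\P(E)$ and (b$'$) $\sup_P \int_E (\rho^p(x,x_0)-r)^+\,mP(dx) \to 0$ as $r\to\infty$, the continuous truncation $(\rho^p-r)^+$ encoding uniform $p$-integrability. Using the disintegration $mP(\cdot) = \int \mu(\cdot)\,P(d\mu)$ and Markov's inequality, I will build a single $d_{E,p}$-compact set $\mathcal{K}\subset\P^p(E)$ with $\sup_{P\in K} P(\mathcal{K}^c)$ arbitrarily small. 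Concretely, from (a$'$) choose compact $C_j\subset E$ and $\delta_j\downarrow 0$ with $\sup_P\int\mu(E\setminus C_j)\,P(d\mu)\le \varepsilon\delta_j 2^{-j-1}$, whence $P(\{\mu:\mu(E\setminus C_j)>\delta_j\})\le\varepsilon 2^{-j-1}$; from (b$'$) choose $r_k\uparrow\infty$ and $\beta_k\downarrow 0$ with $\sup_P\int(\rho^p-r_k)^+\,d(mP)\le\varepsilon\beta_k 2^{-k-1}$, whence $P(\{\mu:\int(\rho^p-r_k)^+d\mu>\beta_k\})\le\varepsilon 2^{-k-1}$. Setting
\[
\mathcal{K} := \bigcap_j\{\mu:\mu(E\setminus C_j)\le\delta_j\}\ \cap\ \bigcap_k\Big\{\mu:\textstyle\int_E(\rho^p(x,x_0)-r_k)^+\mu(dx)\le\beta_k\Big\},
\]
each constraint set is $d_{E,p}$-closed (the first by lower semicontinuity of $\mu\mapsto\mu(\text{open set})$, the second by continuity of $\mu\mapsto\int(\rho^p-r_k)^+d\mu$ on $\P^p(E)$), the first family forces tightness and the second forces uniform $p$-integrability, so $\mathcal{K}$ is $d_{E,p}$-compact by Proposition \ref{pr:wasserstein}; meanwhile $\sup_P P(\mathcal{K}^c)\le\varepsilon$. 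This yields (a), and with \eqref{def:uimetawasserstein} the outer application of Proposition \ref{pr:wasserstein} gives relative compactness of $K$.

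The main obstacle is the sufficiency direction, and within it the requirement that $\mathcal{K}$ be compact for the genuine Wasserstein topology $d_{E,p}$ rather than merely weakly compact: this is why both a tightness layer and a uniform-$p$-integrability layer must be inserted into $\mathcal{K}$ simultaneously, and why the truncation should be taken in the continuous form $(\rho^p-r)^+$, so that the moment constraints define $d_{E,p}$-closed sets. Everything else is a routine transcription of the $p=0$ Markov-inequality argument.
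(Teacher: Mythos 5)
Your proposal is correct and follows essentially the same route as the paper's proof: necessity via the identity $d_{E,p}^p(\mu,\delta_{x_0}) = \int_E\rho^p(x,x_0)\mu(dx)$ together with continuity of the mean-measure map $m$ (both via Proposition \ref{pr:wasserstein}), and sufficiency by a Markov-inequality construction of $d_{E,p}$-compact sets in $\P^p(E)$ that layer tightness constraints with uniform $p$-integrability constraints. The only differences are cosmetic: the paper folds both constraints into a single family of closed sets $C_k$ and handles closedness via lower semicontinuity of $\psi 1_{\{\psi>r\}}$ plus Fatou, whereas you use two separate families and the continuous truncation $(\rho^p - r)^+$, for which closedness is immediate.
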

\begin{proof}
Suppose first that $K$ is relatively compact. Note that 
\[
d^p_{E,p}(\mu,\delta_{x_0}) = \int_E\mu(dx)\rho^p(x,x_0),
\]
and thus the uniformly integrability \eqref{def:uimetawasserstein} holds by Proposition \ref{pr:wasserstein}(2). It is straightforward to show that $m : \P^p(\P^p(E)) \rightarrow \P^p(E)$ is continuous; indeed, suppose $P_n \rightarrow P$ in $\P^p(\P^p(E))$, and $\phi : E \rightarrow \R$ is continuous with $|\phi(x)| \le c(1+\rho^p(x,x_0))$ for some $c \ge 0$. Then
\[
\left|\int\phi\,d\mu\right| \le c\left(1+d^p_{E,p}(\mu,\delta_{x_0})\right),
\]
and thus Proposition \ref{pr:wasserstein}(3) implies
\[
\int\phi\,d[mP_n] = \int P_n(d\mu)\int\phi\,d\mu \rightarrow \int P(d\mu)\int\phi\,d\mu = \int\phi\,d[mP].
\]
Continuity of $m$ implies that $\{mP : P \in K\}$ is relatively compact.

Conversely, assume $\{mP : P \in K\}$ is relatively compact and \eqref{def:uimetawasserstein} holds. The uniform integrability assumption rewrites as
\[
\lim_{r \rightarrow \infty}\sup_{P \in K}\int_{\left\{\mu : d_{E,p}^p(\mu,\delta_{x_0}) \ge r\right\}}P(d\mu)d_{E,p}^p(\mu,\delta_{x_0}) = 0,
\]
so we need only to show that $K$ is tight, in light of Proposition \ref{pr:wasserstein}. Now suppose $P_n \in K$, and let $I_n := mP_n$. Define $\psi(x) := 1 + \rho^p(x,x_0)$. Relative compactness of $I_n$ in $\P^p(E)$ implies that
\begin{align*}
\lim_{r \rightarrow\infty}\sup_n\int_{\{\psi \ge r\}}\psi\,dI_n &= 0.
\end{align*}
Thus, for each $\epsilon > 0$ there exist $r(\epsilon) > 0$ and a compact set $K_\epsilon \subset E$ such that
\begin{align*}
\sup_nI_n(K_\epsilon^c) \le \epsilon/2, \quad \sup_n\int_{\{\psi > r(\epsilon)\}}\psi\,dI_n \le \epsilon/2.
\end{align*}
Now fix $\epsilon > 0$, and for each $k$ define
\begin{align*}
C_k = \left\{\mu \in \P^p(E) : \mu(K_{\epsilon 2^{-k}/k}^c) \le 1/k, \text{ and } \int_{\{\psi > r(\epsilon 2^{-k}/k)\}}\psi\,d\mu \le 1/k\right\}.
\end{align*}
Markov's inequality implies
\begin{align*}
P_n(C_k^c) &\le P_n\left\{\mu : \mu(K_{\epsilon 2^{-k}/k}^c) > 1/k\right\} + P_n\left\{\mu : \int_{\{\psi > r(\epsilon 2^{-k}/k)\}}\psi\,d\mu > 1/k\right\} \\
	&\le kI_n(K_{\epsilon 2^{-k}/k}^c) + k \int_{\{\psi > r(\epsilon 2^{-k}/k)\}}\psi\,dI_n \\
	&\le 2^{-k}\epsilon,
\end{align*}
and thus $P_n(\bigcup_{k\ge 1}C_k^c) \le \epsilon$. Since $1_{K_\eta^c}$ and $\psi1_{\psi > \eta}$ are lower semicontinuous on $E$ for each $\eta > 0$, it follows from Fatou's lemma that each $C_k$ is closed. Thus $\bigcap_{k\ge 1}C_k$ is compact, and $P_n$ are tight.
\end{proof}

\begin{corollary} \label{co:ptight}
Let $(E,\rho)$ be a complete separable metric space. Suppose $K \subset \P^p(\P^p(E))$ is such that $\{mP : P \in K\} \subset \P(E)$ is tight and
\[
\sup_{P \in K}\int mP(dx)\rho^{p'}(x,x_0) < \infty, \text{ for some } p' > p.
\]
Then $K$ is relatively compact.
\end{corollary}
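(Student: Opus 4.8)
The plan is to deduce this from the compactness characterization in Proposition \ref{pr:metawassersteincompact}. That proposition says $K$ is relatively compact in $\P^p(\P^p(E))$ precisely when (i) $\{mP : P \in K\}$ is relatively compact in $\P^p(E)$, and (ii) the uniform-integrability condition \eqref{def:uimetawasserstein} holds. So the task reduces to verifying (i) and (ii) from the two hypotheses: tightness of $\{mP : P \in K\}$ in $\P(E)$ and the uniform $p'$-moment bound $\sup_{P \in K}\int mP(dx)\rho^{p'}(x,x_0) =: C < \infty$. Throughout I write $g(\mu) := \int_E \mu(dx)\rho^p(x,x_0)$, and recall the identity $g(\mu) = d^p_{E,p}(\mu,\delta_{x_0})$ already noted in the proof of Proposition \ref{pr:metawassersteincompact}.

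First I would establish (i). By Proposition \ref{pr:wasserstein}(2), relative compactness of $\{mP : P \in K\}$ in $\P^p(E)$ is equivalent to tightness (given) together with the uniform integrability of $\rho^p(\cdot,x_0)$ under the measures $mP$. This last point is a routine truncation argument exploiting the gap $p' > p$: on the set $\{\rho^p(x,x_0) \ge r\}$ one has $\rho^p(x,x_0) \le r^{-(p'-p)/p}\rho^{p'}(x,x_0)$, so that
\[
\int_{\{\rho^p(x,x_0) \ge r\}} mP(dx)\,\rho^p(x,x_0) \le r^{-(p'-p)/p}\int_E mP(dx)\,\rho^{p'}(x,x_0) \le C\, r^{-(p'-p)/p},
\]
which tends to $0$ as $r \to \infty$ uniformly in $P \in K$. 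Hence (i) holds.

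The main content is (ii), and the key step is to lift the $p'$-moment bound on $mP$ to a bounded higher moment of $g$ under $P$. Since $p'/p > 1$, the map $t \mapsto t^{p'/p}$ is convex, so Jensen's inequality applied to the inner integral against $\mu$ gives $g(\mu)^{p'/p} \le \int_E \mu(dx)\,\rho^{p'}(x,x_0)$. Integrating against $P$ and using the definition of the mean measure (Fubini),
\[
\int_{\P^p(E)} P(d\mu)\, g(\mu)^{p'/p} \le \int_{\P^p(E)} P(d\mu)\int_E \mu(dx)\,\rho^{p'}(x,x_0) = \int_E mP(dx)\,\rho^{p'}(x,x_0) \le C,
\]
uniformly in $P \in K$. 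With this bounded $(p'/p)$-moment in hand, \eqref{def:uimetawasserstein} follows by exactly the same truncation as in (i), now carried out one level up: on $\{g > r\}$ one has $g(\mu) \le r^{-(p'-p)/p} g(\mu)^{p'/p}$, whence $\int_{\{g > r\}} P(d\mu)\, g(\mu) \le C\, r^{-(p'-p)/p} \to 0$ uniformly over $P \in K$. Both conditions of Proposition \ref{pr:metawassersteincompact} are thus met, and $K$ is relatively compact.

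I do not expect a genuine obstacle here; the only real idea is recognizing that the hypothesis is tailored so that Jensen converts the $\rho^{p'}$-moment of the mean measure into a super-linear moment of $\mu \mapsto d^p_{E,p}(\mu,\delta_{x_0})$ under $P$, after which both required uniform-integrability statements reduce to the elementary truncation estimate available whenever $p' > p$.
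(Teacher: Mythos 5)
Your proof is correct and follows essentially the same route as the paper: apply Proposition \ref{pr:metawassersteincompact}, using Jensen's inequality to convert the $\rho^{p'}$-moment bound on $mP$ into a uniform bound on $\int P(d\mu)\,d^{p'}_{E,p}(\mu,\delta_{x_0})$ wait—more precisely on $\int P(d\mu)\bigl(\int\rho^p(x,x_0)\mu(dx)\bigr)^{p'/p}$—from which the uniform integrability condition \eqref{def:uimetawasserstein} follows by truncation. The only difference is that you spell out the details the paper leaves implicit (the truncation estimates and the verification that $\{mP : P \in K\}$ is relatively compact in $\P^p(E)$), which is fine.
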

\begin{proof}
The assumption along with Jensen's inequality imply
\[
\sup_{P \in K}\int P(d\mu)\left(\int\rho^{p}(x,x_0)\mu(dx)\right)^{p'/p} < \infty.
\]
and the uniform integrability condition \eqref{def:uimetawasserstein} of Proposition \ref{pr:metawassersteincompact}.
\end{proof}

Finally, we specialize the last result to a particular space of interest. As usual, assume $A$ is a closed subset of a Euclidean space, and define $(\V[A],d_{\V[A]})$ as in Section \ref{se:relaxed}. Endow $\Omega[A] = \V[A] \times \C^d$ with the metric formed by adding the metrics of $\V[A]$ and $\C^d$:
\[
((q,x),(q',x')) \mapsto \|x - x'\|_T + d_{\V[A]}(q,q').
\]

\begin{proposition} \label{pr:omegacompact}
Suppose $K \subset \P(\Omega[A])$ is such that $\{P \circ X^{-1} : P \in K\}$ is tight in $\P(\C^d)$ and 
\[
\sup_{P \in K}\E^P\left[\|X\|^{p'}_T + \int_0^T|\Lambda_t|^{p'}dt\right] < \infty.
\]
Then $K$ is relatively compact in $\P^p(\Omega[A])$.
\end{proposition}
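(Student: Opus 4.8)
The plan is to apply Lemma \ref{le:productrelcompactness} with $E = \V[A]$ and $F = \C^d$, which reduces the claim to showing that the two families of marginals
\[
\{P\circ X^{-1} : P \in K\} \subset \P^p(\C^d), \qquad \{P\circ\Lambda^{-1} : P\in K\}\subset\P^p(\V[A])
\]
are each relatively compact in their respective spaces.

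For the state marginals I would invoke Proposition \ref{pr:wasserstein}: relative compactness in $\P^p(\C^d)$ is equivalent to tightness in $\P(\C^d)$ together with the uniform integrability condition \eqref{def:uiwasserstein}, taken with respect to the norm $\|\cdot\|_T$ and base point $0$. Tightness is assumed outright. For the uniform integrability, on the set $\{\|x\|_T^p \ge r\}$ one has $\|x\|_T^p \le r^{(p-p')/p}\|x\|_T^{p'}$, so that
\[
\int_{\{\|x\|_T^p \ge r\}}\|x\|_T^p\,d(P\circ X^{-1}) \le r^{(p-p')/p}\,\E^P\|X\|_T^{p'},
\]
which tends to $0$ uniformly over $K$ as $r\to\infty$ since $p' > p$ and $\sup_{P\in K}\E^P\|X\|_T^{p'}<\infty$. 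This settles the state marginals.

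The substance is the control marginals. Recalling from \eqref{def:Vmetric} that $d_{\V[A]}(q^1,q^2) = d_{[0,T] \times A,p}(q^1/T,q^2/T)$, the map $\iota : q \mapsto q/T$ is an isometry of $\V[A]$ onto a subset of $\P^p(E_0)$, where $E_0 := [0,T] \times A$ is complete and separable. The image $\iota(\V[A])$ — the normalized measures whose $[0,T]$-marginal is uniform — is closed in $\P^p(E_0)$, since marginals and this uniformity constraint are preserved under ($p$-Wasserstein, hence weak) limits; consequently, by the portmanteau theorem, $\P^p(\iota(\V[A]))$ is closed in $\P^p(\P^p(E_0))$. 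It therefore suffices to show that $K'' := \{P\circ(\iota\Lambda)^{-1} : P\in K\}$ is relatively compact in $\P^p(\P^p(E_0))$, since its limit points then lie in the closed set $\P^p(\iota(\V[A]))$ and may be transported back through the isometry $\iota$. For this I would apply Corollary \ref{co:ptight} (which rests on Proposition \ref{pr:metawassersteincompact}). Fixing a base point $x_0 = (0,a_0)$ with $a_0 \in A$ and noting $\rho^{p'}((t,a),x_0) \le C(1+|a|^{p'})$ because $[0,T]$ is bounded, the mean measure $mP$ of $P\circ(\iota\Lambda)^{-1}$ satisfies
\[
\int_{E_0} mP(d(t,a))\,\rho^{p'}((t,a),x_0) = \frac{1}{T}\,\E^P\int_0^T\!\!\int_A \rho^{p'}((t,a),x_0)\,\Lambda_t(da)\,dt \le \frac{C}{T}\left(T + \E^P\int_0^T|\Lambda_t|^{p'}\,dt\right),
\]
which is bounded uniformly over $K$ by hypothesis, giving the uniform $p'$-moment bound required by Corollary \ref{co:ptight}. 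The mean measures are also tight: using $\Lambda_t(A)=1$, their $[0,T]$-marginal is the fixed probability measure $\tfrac1T\,\mathrm{Leb}$, while their $A$-marginal carries a uniform $p'$-moment and is thus tight by Markov's inequality; tightness of both marginals yields joint tightness on $E_0$. Corollary \ref{co:ptight} then delivers relative compactness of $K''$, completing the argument.

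The main obstacle is the bookkeeping in this control-marginal step: identifying $\V[A]$ with a \emph{closed} subset of $\P^p([0,T] \times A)$ via $\iota$ so that the abstract criterion of Corollary \ref{co:ptight} applies, and verifying that the uniform bound on $\E^P\int_0^T|\Lambda_t|^{p'}\,dt$ translates into precisely the mean-measure moment condition. Everything else is a routine consequence of the gap $p' > p$ and the boundedness of the time interval.
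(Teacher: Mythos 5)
Your proposal is correct and follows essentially the same route as the paper: reduce to the two marginal families via Lemma \ref{le:productrelcompactness}, dispose of the state marginals with the $p'>p$ moment bound and Proposition \ref{pr:wasserstein}, and handle the control marginals through the mean measure and Corollary \ref{co:ptight}. The only difference is that you make explicit the bookkeeping the paper leaves implicit (the isometric identification $q \mapsto q/T$ of $\V[A]$ with a closed subset of $\P^p([0,T]\times A)$, and the closedness needed so limit points remain laws of relaxed controls), which is a welcome clarification rather than a different argument.
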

\begin{proof}
It is immediate that $\{P \circ X^{-1} : P \in K\}$ is relatively compact in $\P^p(\C^d)$. For $P \in K$ define $m_0 P \in \V$ by
\[
m_0 P(C) := \int_{\Omega[A]} P(dq,dx)q(C).
\]
Since $A$ is a subset of a Euclidean space, Markov's inequality implies that $\{m_0 P : P \in K\} \subset \P(\V[A])$ is tight. Conclude from Corollary \ref{co:ptight} that $\{P \circ \Lambda^{-1} : P \in K\}$ is relatively compact in $\P^p(\V[A])$. The claim now follows from Lemma \ref{le:productrelcompactness} and Corollary \ref{co:ptight}.
\end{proof}

\begin{proposition} \label{pr:itocompact}
Let $d$ be a positive integer, and fix $c > 0$, $p' > p \ge 1 \vee p_\sigma$, $p_\sigma \in [0,2]$, and $\lambda \in \P^{p'}(\R^d)$. Let $\Q_c \subset \P(\Omega[A])$ be the set of laws $P \circ (\Lambda,X)^{-1}$ of $\Omega[A]$-valued random variables $(\Lambda,X)$ defined on some filtered probability space $(\Omega,\F_t,P)$ satisfying:
\begin{enumerate}
\item $dX_t = \int_Ab(t,X_t,a)\Lambda_t(da)dt + \int_A\sigma(t,X_t,a)N(da,dt)$.
\item $N = (N^1,\ldots,N^m)$ are orthogonal $\F_t$-martingale measures on $A \times [0,T]$ with intensity $\Lambda_t(da)dt$.
\item $\sigma : [0,T] \times \R^d \times A \rightarrow \R^{d \times d}$ and $b : [0,T] \times \R^d \times A \rightarrow \R^d$ are jointly measurable.
\item $X_0$ has law $\lambda$ and is $\F_0$-measurable.
\item For each $(t,x,a) \in [0,T] \times \R^d \times A$,
\begin{align*}
|b(t,x,a)| &\le c\left(1 + |x| + |a|\right), \\
|\sigma\sigma^\top(t,x,a)| &\le c\left(1 + |x|^{p_\sigma} + |a|^{p_\sigma}\right).
\end{align*}
\item Lastly,
\[
\E^P\left[|X_0|^{p'} + \int_0^Tdt|\Lambda_t|^{p'}\right] \le c.
\]
\end{enumerate}
(That is, we vary over $\sigma$, $b$, and the probability space of definition.) Then $\Q_c$ is relatively compact in $\P^p(\Omega[A])$.
\end{proposition}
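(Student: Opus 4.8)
The plan is to reduce the statement to Proposition \ref{pr:omegacompact}, whose hypotheses are (i) tightness of the state-marginals $\{P \circ X^{-1} : P \in \Q_c\}$ in $\P(\C^d)$ and (ii) the uniform bound $\sup_{P \in \Q_c}\E^P[\|X\|_T^{p'} + \int_0^T|\Lambda_t|^{p'}\,dt] < \infty$. The $\Lambda$-half of (ii) is immediate from hypothesis (6). For the $X$-half I would argue exactly as in Lemma \ref{le:stateestimate}, with the $\mu$-dependent terms absent: the growth bounds (5) are precisely those of (A.2) in this $\mu$-free setting, so the Burkholder--Davis--Gundy and Gronwall computation with $\gamma = p'$ yields $\E^P\|X\|_T^{p'} \le c_4(1 + \E^P\int_0^T|\Lambda_t|^{p'}\,dt) \le c_4(1+c)$, where $c_4$ depends only on $p'$, $|\lambda|^{p'}$, $T$, and $c$. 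This settles (ii), so only the tightness (i) remains.

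For (i) the tool is Aldous' criterion. I would decompose $X_t = X_0 + D_t + M_t$, where $D_t = \int_0^t\int_A b(u,X_u,a)\Lambda_u(da)\,du$ and $M_t = \int_0^t\int_A \sigma(u,X_u,a)\,N(da,du)$ is a continuous square-integrable martingale with $\E^P|M_t-M_s|^2 = \E^P\int_s^t\int_A|\sigma|^2(u,X_u,a)\Lambda_u(da)\,du$ (here $|\sigma|^2 = \mathrm{Tr}(\sigma\sigma^\top)$). Since $X_0 \sim \lambda$ is fixed and the $p'$-moment bound from (ii) makes each time-marginal tight via Markov's inequality, it suffices to control increments over small random windows. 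The key estimate is a Hölder-in-time bound valid for any nonnegative progressive $\phi$ and any $r > 1$:
\[
\E^P\int_\tau^{(\tau+\delta)\wedge T}\phi_u\,du \le \delta^{1-1/r}\left(\E^P\int_0^T\phi_u^r\,du\right)^{1/r}.
\]
Applying this with $\phi_u = 1 + |X_u| + \int_A|a|\Lambda_u(da)$ controls $\E^P|D_{(\tau+\delta)\wedge T} - D_\tau|$, and applying it with $\phi_u = 1 + |X_u|^{p_\sigma} + \int_A|a|^{p_\sigma}\Lambda_u(da)$ controls $\E^P|M_{(\tau+\delta)\wedge T}-M_\tau|^2$ through the growth bound (5) on $\sigma\sigma^\top$. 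In both cases the relevant quantity $\E^P\int_0^T\phi_u^r\,du$ is uniformly bounded, using the moment bounds of (ii) together with Jensen's inequality in the form $\int_A|a|^{\beta}\Lambda_u(da) \le (|\Lambda_u|^{p'})^{\beta/p'}$, provided $r$ is chosen with $r > 1$, $r \le p'$, and $p_\sigma r \le p'$. Splitting $\epsilon$ in Markov's inequality then verifies the Aldous condition $\lim_{\delta\downarrow 0}\sup_{P,\tau}P(|X_{(\tau+\delta)\wedge T}-X_\tau| > \epsilon) = 0$, uniformly over $P \in \Q_c$ and stopping times $\tau \le T$.

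The main obstacle I anticipate is the bookkeeping of exponents in the increment estimates: one must keep the short-time gain $\delta^{1-1/r}$ while ensuring the spatial and control integrands stay uniformly integrable after the Hölder step. This is exactly where the strict inequality $p' > p$ and the bound $p_\sigma \le 2$ are used, since $p' > p \ge 1 \vee p_\sigma$ guarantees a genuine margin $r > 1$ satisfying both $r \le p'$ and $p_\sigma r \le p'$, so that the $p'$-moment bounds of (ii) dominate the $r$-th powers that appear. Once (i) and (ii) are in hand, Proposition \ref{pr:omegacompact} delivers relative compactness of $\Q_c$ in $\P^p(\Omega[A])$.
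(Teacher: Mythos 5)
Your proposal is correct, and it follows the same overall architecture as the paper's proof: establish the uniform bound $\sup_{P\in\Q_c}\E^P\|X\|_T^{p'}<\infty$ by the Lemma \ref{le:stateestimate} computation, reduce to Proposition \ref{pr:omegacompact}, and obtain tightness of $\{P\circ X^{-1}:P\in\Q_c\}$ via Aldous' criterion, with the short-time gain coming from exactly the H\"older-in-time/uniform-integrability mechanism that the strict inequalities $p'>p\ge 1\vee p_\sigma$, $p_\sigma\le 2$ make available. The one place you genuinely diverge is in how the Aldous condition is checked: the paper bounds the $p$-th moment $\E^P|X_{(\tau+\delta)\wedge T}-X_\tau|^p$ of the full increment, which requires the Burkholder--Davis--Gundy inequality and a case analysis according to $p\ge 2$ versus $p\in[1,2)$, whereas you decompose $X$ into drift plus martingale and apply Markov's inequality separately to an $L^1$ estimate for the drift increment and an $L^2$ (quadratic-variation/isometry) estimate for the martingale increment, verifying the criterion in its in-probability form. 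Your variant is slightly more elementary --- it needs only the covariance identity for the martingale-measure integral (valid here since $p_\sigma\le p'$ makes $M$ square-integrable) and optional sampling, and it avoids both BDG and the exponent case-splitting --- while the paper's version yields the stronger uniform $L^p$ modulus of continuity as a by-product. Both arguments are complete; your choice of $r>1$ with $r\le p'$ and $p_\sigma r\le p'$ is indeed feasible (e.g.\ $r=p'/p$), which is precisely where $p'>p\ge 1\vee p_\sigma$ enters.
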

\begin{proof}
For each $P \in \Q_c$ with corresponding probability space $(\Omega,\F_t,P)$ and coefficients $b$, $\sigma$, standard estimates as in Lemma \ref{le:stateestimate} yield
\begin{align*}
\E^P\|X\|^{p'}_T \le C\E^P\left[1 + |X_0|^{p'} + \int_0^Tdt|\Lambda_t|^{p'}\right].
\end{align*}
where $C > 0$ does not depend on $P$. Hence assumption (6) implies
\begin{align}
\sup_{P \in \Q_c}\E^P\|X\|^{p'}_T \le C(1 + c) < \infty. \label{pf:itocompact1}
\end{align}
The result will follow immediately from \eqref{pf:itocompact1} and Proposition \ref{pr:omegacompact} if we show that $\{P \circ X^{-1} : P \in \Q_c\} \subset \P(\C^d)$ is tight. To check this, we will verify Aldous' criterion for tightness \cite[Theorem 16.10]{billingsley-convergence},
\begin{align}
\lim_{\delta\downarrow 0}\sup_{P \in \Q_c}\sup_\tau \E^P|X_{(\tau + \delta) \wedge T} - X_\tau|^p = 0, \label{pf:itocompact2}
\end{align}
where the innermost supremum is over stopping times $\tau$ valued in $[0,T]$. The Burkholder-Davis-Gundy inequality implies that there exists $C' > 0$ such that, for each $P \in \Q_c$ and each $\tau$,
\begin{align*}
\E^P|X_{(\tau + \delta) \wedge T} - X_\tau|^p &\le C'\E^P\left[\left|\int_\tau^{(\tau + \delta) \wedge T}dt\int_A\Lambda_t(da)b(t,X_t,a)\right|^p\right] \\
	&\quad + C'\E^P\left[\left(\int_\tau^{(\tau + \delta) \wedge T}dt\int_A\Lambda_t(da)|\sigma\sigma^\top(t,X_t,a)|\right)^{p/2} \right] \\
	&\le C'\E^P\left[\left|\int_\tau^{(\tau + \delta) \wedge T}dt\int_A\Lambda_t(da)c(1 + \|X\|_T + |a|)\right|^p\right] \\
	&\quad + C'\E^P\left[\left(\int_\tau^{(\tau + \delta) \wedge T}dt\int_A\Lambda_t(da)c(1 + \|X\|^{p_\sigma}_T + |a|^{p_\sigma})\right)^{p/2} \right].
\end{align*}
Now note that if $1 \le p < 2$ and $x,y \ge 0$ then $(x+y)^{p/2}\le x^{p/2} + y^{p/2}$, and if $p \ge 2$ then $(x+y)^{p/2} \le 2^{p/2-1}(x^{p/2}+y^{p/2})$. In either case, we find another constant $C''$ such that, for each $P \in \Q_c$ and each $\tau$,
\begin{align}
\E^P|X_{(\tau + \delta) \wedge T} - X_\tau|^p  &\le C''\E^P\left[\left|\delta c(1+\|X\|_T)\right|^p + c^p\int_\tau^{(\tau + \delta) \wedge T}|\Lambda_t|^pdt\right] \nonumber \\
	&\quad + C''\E^P\left[\left|\delta c(1+\|X\|_T^{p_\sigma})\right|^{p/2} + \left|c\int_\tau^{(\tau + \delta) \wedge T}|\Lambda_t|^{p_\sigma}dt\right|^{p/2}\right] \label{pf:itocompact3}
\end{align}
The first term of each line poses no problems, in light of \eqref{pf:itocompact1}; that is, since $p_\sigma \le 2$ and $p < p'$,
\[
\lim_{\delta\downarrow 0}\sup_{P \in \Q_c}\sup_\tau \E^P\left[\left|\delta c(1+\|X\|_T)\right|^p + \left|\delta c(1+\|X\|_T^{p_\sigma})\right|^{p/2}\right] = 0.
\]
On the other hand, note that 
\[
\sup_{P \in \Q_c}\E^P\int_0^Tdt|\Lambda_t|^{p'} \le c < \infty,
\]
by assumption. It follows that for any $\gamma \in [0,p')$,
\[
\lim_{\delta \downarrow 0}\sup_{P \in \Q_c}\sup_\tau\E^P\int_\tau^{(\tau + \delta) \wedge T}|\Lambda_t|^\gamma dt = 0,
\]
and in particular this holds for $\gamma = p$. Hence, if $p \ge 2$ then Jensen's inequality along with $p_\sigma \le 2$ implies
\[
\lim_{\delta\downarrow 0}\sup_{P \in \Q_c}\sup_\tau\E^P\left|\int_\tau^{(\tau + \delta) \wedge T}|\Lambda_t|^{p_\sigma}dt\right|^{p/2} \le \lim_{\delta\downarrow 0}\sup_{P \in \Q_c}\sup_\tau\E^P\left|\int_\tau^{(\tau + \delta) \wedge T}|\Lambda_t|^pdt\right| = 0,
\]
On the other hand, if $p < 2$, then Jensen's inequality in the other direction implies
\[
\lim_{\delta\downarrow 0}\sup_{P \in \Q_c}\sup_\tau\E^P\left|\int_\tau^{(\tau + \delta) \wedge T}|\Lambda_t|^{p_\sigma}dt\right|^{p/2} \le \lim_{\delta\downarrow 0}\sup_{P \in \Q_c}\sup_\tau\left(\E^P\int_\tau^{(\tau + \delta) \wedge T}|\Lambda_t|^{p_\sigma}dt\right)^{p/2} = 0,
\]
since $p_\sigma \le p < p'$. Putting this together and returning to \eqref{pf:itocompact3} proves \eqref{pf:itocompact2}.
\end{proof}

\end{document}